\newcommand\mathcall[1]{\text{\usefont{U}{BOONDOX-cal}{m}{n}#1}}
\definecolor{colorred}{HTML}{B00000}
\definecolor{colorgreen}{HTML}{258300}
\newcommand\MyAutoefPhrasecolorGroup[1]{%
  \color@begingroup\color{MyCurrentcolor}#1\endgroup
}%
\def\HyRef@testreftype#1.#2\\{%
 \colorlet{MyCurrentcolor}{.}%
 \ltx@IfUndefined{#1autorefname}{%
   \ltx@IfUndefined{#1name}{%
     \HyRef@StripStar#1\\*\\\@nil{#1}%
     \ltx@IfUndefined{\HyRef@name autorefname}{%
       \ltx@IfUndefined{\HyRef@name name}{%
         \def\HyRef@currentHtag{}%
         \Hy@Warning{No autoref name for `#1'}%
       }{%
         \edef\HyRef@currentHtag{%
           \noexpand\MyAutoefPhrasecolorGroup{%
             \expandafter\noexpand\csname\HyRef@name name\endcsname
           }%
           \noexpand~%
         }%
       }%
     }{%
       \edef\HyRef@currentHtag{%
         \noexpand\MyAutoefPhrasecolorGroup{%
           \expandafter\noexpand
           \csname\HyRef@name autorefname\endcsname
         }%
         \noexpand~%
       }%
     }%
   }{%
     \edef\HyRef@currentHtag{%
       \noexpand\MyAutoefPhrasecolorGroup{%
         \expandafter\noexpand\csname#1name\endcsname
       }%
       \noexpand~%
     }%
   }%
 }{%
   \edef\HyRef@currentHtag{%
     \noexpand\MyAutoefPhrasecolorGroup{%
       \expandafter\noexpand\csname#1autorefname\endcsname
     }%
     \noexpand~%
   }%
 }%
}%
\numberwithin{equation}{section}
\newcommand{\nlb}{{\ensuremath{\textnormal{b}}}}
\newcommand{\nlc}{{\ensuremath{\textnormal{c}}}}
\newcommand{\nld}{{\ensuremath{\textnormal{d}}}}
\newcommand{\nlC}{{\ensuremath{\textnormal{C}}}}
\newcommand{\rmd}{{\ensuremath{\mathrm{d}}}}
\newcommand{\rme}{{\ensuremath{\mathrm{e}}}}
\newcommand{\rmr}{{\ensuremath{\mathrm{r}}}}
\newcommand{\rmE}{{\ensuremath{\mathrm{E}}}}
\newcommand{\rmS}{{\ensuremath{\mathrm{S}}}}
\newcommand{\sfa}{{\ensuremath{\mathsf{a}}}}
\newcommand{\sfb}{{\ensuremath{\mathsf{b}}}}
\newcommand{\sfd}{{\ensuremath{\mathsf{d}}}}
\newcommand{\sfp}{{\ensuremath{\mathsf{p}}}}
\newcommand{\sfB}{{\ensuremath{\mathsf{B}}}}
\newcommand{\sfD}{{\ensuremath{\mathsf{D}}}}
\newcommand{\sfL}{{\ensuremath{\mathsf{L}}}}
\newcommand{\sfP}{{\ensuremath{\mathsf{P}}}}
\newcommand{\cald}{{\ensuremath{\mathcall{d}}}}
\newcommand{\calk}{{\ensuremath{\mathcall{k}}}}
\newcommand{\call}{{\ensuremath{\mathcall{l}}}}
\newcommand{\caln}{{\ensuremath{\mathcall{n}}}}
\newcommand{\calD}{{\ensuremath{\mathcall{D}}}}
\newcommand{\calE}{{\ensuremath{\mathcall{E}}}}
\newcommand{\calF}{{\ensuremath{\mathcall{F}}}}
\newcommand{\calL}{{\ensuremath{\mathcall{L}}}}
\newcommand{\calM}{{\ensuremath{\mathcall{M}}}}
\newcommand{\calU}{{\ensuremath{\mathcall{U}}}}
\newcommand{\calV}{{\ensuremath{\mathcall{V}}}}
\newcommand{\N}{\boldsymbol{\mathrm{N}}}						%Natürliche Zahlen
\newcommand{\Z}{\boldsymbol{\mathrm{Z}}}						%Ganze Zahlen
\newcommand{\R}{\boldsymbol{\mathrm{R}}}						%Reelle Zahlen
\renewcommand{\d}{\,\mathrm{d}}				%Kleines Integral-d
\DeclareMathOperator*{\esssup}{esssup}		%Essentielles Supremum
\DeclareMathOperator{\supp}{spt}			%Träger
\let\originalleft\left			
\let\originalright\right
\renewcommand{\left}{\mathopen{}\mathclose\bgroup\originalleft}
\renewcommand{\right}{\aftergroup\egroup\originalright}
\newcommand{\mapdef}[3][]{\ifthenelse{\isempty{#1}}{#2\quad\longmapsto\quad #3}{#1\colon\quad #2\quad\longmapsto\quad #3}}		%Abbildungsdefinition, abgesetzt
\newcommand{\der}[2][]{\ifthenelse{\isempty{#1}}{\frac{\nld}{\nld #2}}{\left.\frac{\nld}{\nld #2}\right\vert_{#1}}}				%Ableitungsoperator
\newcommand{\checknarg}{\@ifnextchar\bgroup{\gobblenarg}{}}
\newcommand{\gobblenarg}[1]{\@ifnextchar\bgroup{,\ \! #1\gobblenarg}{,\ \! #1}}
\theoremstyle{definition}
\newtheorem{bump}{Bump}[section]
\theoremstyle{plain}
\newtheorem{theorem}[bump]{Theorem}
\newtheorem{proposition}[bump]{Proposition}
\newtheorem{definition}[bump]{Definition}
\newtheorem{lemma}[bump]{Lemma}
\newtheorem{corollary}[bump]{Corollary}
\theoremstyle{remark}
\newtheorem{remark}[bump]{Remark}
\newtheorem{example}[bump]{Example}
\newtheoremstyle{cited}
{\topsep}		%Space above
{\topsep}		%Space below
{\itshape}		%Body font
{}				%Indent amount
{\bfseries}		%Theorem head font
{\textbf{.}}	%Punctuation after Theorem head
{.5em}			%Space after Theorem head
{\thmname{#1} \thmnumber{#2} \thmnote{\normalfont#3}}		%Theorem head
\theoremstyle{cited}			%Umgebungen für Zitate (d.h. ohne runde Klammern)
\let\@fnsymbol\@arabic	 		%Arabische Ziffern auf der Titelseite
\def\nonumberfootnote{\xdef\@thefnmark{}\@footnotetext}			%Fußnote ohne Nummer
\newcommand{\mms}{\mathit{M}}				%Basisraum
\newcommand{\mmms}{\mathit{N}}				%Basisraum II
\newcommand{\met}{\sfd}						%Metrik
\newcommand{\Rmet}{g}					%Riemannian metric
\newcommand{\RRmet}{h}					%Riemannian metric II
\newcommand{\meas}{\mathfrak{m}}			%Referenzmaß
\newcommand{\mmeas}{\mathfrak{n}}		%Referenzmaß II
\newcommand{\Leb}{\calL}					%Lebesgue-Maß	
\newcommand{\vol}{\mathfrak{v}}				%Riemann'sches Volumenmaß
\newcommand{\Prob}{\boldsymbol{\mathrm{P}}}					%Wahrscheinlichkeitsmaß
\newcommand{\Exp}{\boldsymbol{\mathrm{E}}}	%Erwartungswert
\newcommand{\F}{\calF}						%Form domain
\newcommand{\Kato}{\mathscr{K}}%Kato-Klasse
\newcommand{\RCD}{\mathrm{RCD}}				%RCD-Bedingung
\newcommand{\BE}{\mathrm{BE}}				%Bakry-Emery Bedingung
\newcommand{\bounded}{\nlb}					%Beschränkt
\newcommand{\comp}{\nlc}					%Kompakt
\newcommand{\loc}{\mathrm{loc}}				%Lokal
\newcommand{\HS}{{\mathrm{HS}}}				%Hilbert-Schmidt
\newcommand{\Ric}{\mathrm{Ric}}				%Ricci-Krümmungstensor
\newcommand{\Cont}{\nlC}					%Stetige Funktionen
\newcommand{\Ell}{\mathit{L}}				%Lebesgue-integrierbare Funktionen
\newcommand{\Lip}{\mathrm{Lip}}				%Lipschitz-stetige Funktionen/Kurven
\newcommand{\Ch}{\calE}						%Cheeger-Energie
\newcommand{\Dom}{\mathcall{D}}					%Domäne
\newcommand{\Ball}{\sfB}
\DeclareMathOperator{\Hess}{Hess}			%Hessesche
\newcommand{\ChHeat}{\sfP}					%Wärmefluss der Cheeger-Energie
\newcommand{\Schr}[1]{\sfP^{#1}}			%Schrödinger-Halbgruppe
\newcommand{\push}{\sharp}					%Bildmaßoperator
\newcommand{\One}{1}				%Indikatorfunktion
\providecommand{\bysame}{\leavevmode\hbox to3em{\hrulefill}\thinspace}
\let\oldtocsection=\tocsection
\let\oldtocsubsection=\tocsubsection
\let\oldtocsubsubsection=\tocsubsubsection
\renewcommand{\tocsection}[2]{\hspace{0em}\oldtocsection{#1}{#2}}
\renewcommand{\tocsubsection}[2]{\hspace{1em}\oldtocsubsection{#1}{#2}}
\renewcommand{\tocsubsubsection}[2]{\hspace{2em}\oldtocsubsubsection{#1}{#2}}
\newcommand{\nocontentsline}[3]{}
\newcommand{\tocless}[2]{\bgroup\let\addcontentsline=\nocontentsline#1{#2}\egroup}
\begin{document}
\title[Heat kernel bounds and Ricci curvature]{Heat kernel bounds and Ricci curvature for Lipschitz manifolds}%required
\author{Mathias Braun}%required
%\contrib[...]{...}%optional
\address{University of Bonn, Institute for Applied Mathematics, Endenicher Allee 60, 53115 Bonn, Germany}%required
\email{braun@iam.uni-bonn.de}%optional
\author{Chiara Rigoni}
\address{Faculty of Mathematics, University of Vienna, 
Oskar-Morgenstern-Platz 1, 1090 Wien, Austria}
\email{chiara.rigoni@univie.ac.at}%optional
%\curraddr{...}%optional
%\urladdr{...}%optional
%\dedicatory{...}%optional
\date{\today}%required
\thanks{}%optional
%\translator{...}%optional
\subjclass[2020]{Primary: 51F30, 58J35; Secondary: 46E36, 47D08, 53C23.}%required
\keywords{Lipschitz manifold; Heat kernel; Kato class; Ricci curvature.}%optional

\begin{abstract} Given any $d$-dimensional Lipschitz Riemannian manifold $(M,g)$ with heat kernel $\mathsf{p}$, we establish uniform upper bounds on $\sfp$ which can always be  decoupled in space and time. More precisely, we prove the existence of a constant $C>0$ and a bounded Lipschitz function $R\colon M \to (0,\infty)$ such that for every $x\in M$ and every $t>0$, 
	\begin{align*}
		\sup_{y\in M} \mathsf{p}(t,x,y) \leq C\min\{t, R^2(x)\}^{-d/2}.
	\end{align*} 
	This allows us to identify suitable weighted Lebesgue spaces w.r.t.~the given volume measure as subsets of the Kato class induced by $(M,g)$. In the case $\partial M \neq \emptyset$, we also provide an analogous inclusion for Lebesgue spaces w.r.t.~the surface measure on $\partial M$. 
	
	We use these insights  to give sufficient conditions for a possibly noncomplete Lipschitz Riemannian manifold to be tamed, i.e.~to admit a measure-valued lower bound on the Ricci curvature, formulated in a synthetic sense.
\end{abstract}
\maketitle
\thispagestyle{empty}

\tableofcontents

\section{Introduction}

\subsubsection*{Background} A \emph{Lipschitz manifold}  is a topological manifold, possibly with boundary, with locally Lipschitz transition maps, cf.~\autoref{Sub:Calculus on}  for details and basic notions. Every smooth, $\smash{\Cont^1}$, or piecewise linear manifold has a Lipschitz structure. Even better, by Sullivan's theorem \cite{sullivan1979}  \emph{every} topological manifold, apart from dimension $4$, can be endowed with a Lipschitz structure which is unique up to  locally bi-Lipschitz homeomorphisms isotopic to the identity. (Both the existence and the uniqueness statements are false in dimension $4$, as proven in \cite{donaldson1989}.) While this result follows from smoothability of all topological manifolds of dimension less than $4$ \cite{moise1977}, it is a key feature in  higher dimensions, where $\smash{\Cont}^1$ structures might not exist at all  \cite{kervaire1960}. An additional strength of Sullivan's  result comes to light in the context of \textit{Lipschitz Riemannian manifolds}, briefly LR manifolds, which are pairs $(\mms,\Rmet)$ of a Lipschitz manifold $\mms$ equipped with a compatible Riemannian metric $\Rmet$.  %As in the smooth Riemannian case, such a $\Rmet$ can always be constructed by pulling back the Euclidean metric chartwise. 
As summarized in \autoref{Sub:Calculus on}, such an $(\mms,\Rmet)$ automatically induces  a length  metric space $(\mms,\met)$ \cite{DCP, DCP2, dececco1991, luukkainen1977} and a Dirichlet form $(\Ch, W^{1,2}(\mms))$ \cite{chen2012, davies1989} whose    correspondence, however, is not one-to-one in general \cite{sturm1997}. (Unless otherwise specified, all Lebesgue and Sobolev spaces are considered w.r.t.~the volume measure $\vol$ on $\mms$ induced by $\Rmet$.) In particular, these notions allow for canonical definitions of Lipschitz functions, geodesics, heat flow and Brownian motion. A further feature of LR manifolds is their well-behavedness under bi-Lipschitz homeomorphisms. Even more geometrically, every compact  Alexandrov space of finite Hausdorff dimension contains an open convex subset of full Hausdorff measure which has an LR structure \cite{perelman2017}. There is thus high evidence in investigating LR  manifolds from probabilistic, analytic, and geometric perspectives.

Remarkably, however, few attention has been devoted to this  setting so far (even in cases where the considered  LR manifolds are ``almost smooth'' in a suitable sense, compare e.g.~with \autoref{Def:Almost smooth} below, but whose regular parts then typically fail to be e.g.~complete). Relevant works study differential forms \cite{goldshtein1982, sullivan1979, whitney1957}, Hodge--de Rham's theorem and index theory \cite{hilsum1985, teleman1983}, eigenvalue estimates \cite{lott2018}, boundary-value problems \cite{goldshtein2011},  smoothability \cite{heinonen2011, whitehead1961}, or Varadhan   short-time asymptotics for the heat kernel \cite{norris1997}. 

In view of the above mentioned diversity of examples as well as recent breakthroughs in metric geometry for nonuniform  curvature bounds \cite{braun2021,carron2021, ERST20, guneysu2020, sturm2020}, we predict the class of LR manifolds --- regarded as prototypes of Riemannian spaces with singularities  --- to have great potential for near future research, which this article aims to initiate.

\subsubsection*{Upper heat kernel bounds} The heat flow $(\ChHeat_t)_{t \geq 0}$ corresponding to $\Ch$ can be represented as an integral operator by a jointly locally Hölder integral kernel $\sfp\colon (0,\infty)\times\mms^2\to (0,\infty)$ \cite{sturm1995,sturm1996}, as we recall in  \autoref{Sub:Some potential theory}. It is reluctantly accepted, even in the smooth setting  \cite{grigoryan2009}, that $\sfp$ is explicitly computable only in few cases. Nevertheless, in practice it typically suffices to know upper bounds on $\sfp$. Usual estimates of this sort are Gaussian and read as
\begin{align}\label{Eq:GUP}
\sfp(t,x,y) \leq C\,\vol\big[\Ball_{\sqrt{t}}(x)\big]^{-1/2}\,\vol\big[\Ball_{\sqrt{t}}(y)\big]^{-1/2}\,\exp\!\Big[\!-\!\frac{\met^2(x,y)}{(4+\varepsilon)t}\Big]
\end{align}
for every $\varepsilon > 0$ (see \autoref{Pr:HK bound general} and the references in \autoref{Sub:Heat kernel}). 

This is still unsatisfactory for at least three  reasons, which to address is of fundamental importance for our later applications, as detailed in the next paragraph.
\begin{itemize}
\item  First, the constant $C>0$ and the admissible range of $t>0$ do in general depend on $x,y\in\mms$, in which case \eqref{Eq:GUP} becomes qualitatively useless when, for example, one wants to derive globally or locally uniform upper bounds, both in space and time, on the heat kernel $\sfp$. 
\item Second, the dependency of the r.h.s.~of \eqref{Eq:GUP} on $t$, $x$, and $y$ can usually not be decoupled. 
\item Third, one would often like upper bounds on $\sfp(t,x,y)$ to be independent of $y$, which cannot be guaranteed by \eqref{Eq:GUP} either. 
\end{itemize}
The argument for \eqref{Eq:GUP} from \cite{sturm1995} works for general Dirichlet spaces satisfying a local doubling condition \cite[p.~293]{sturm1995} and a local Sobolev inequality \cite[p.~294]{sturm1995}. However, our given LR structure of $(\mms,\Rmet)$ is a priori respected only by the fact that these two properties transfer from Euclidean space back to $\mms$ locally in charts, usually with nonuniform constants, cf.~\autoref{rmk:lambda} and \autoref{Sub:Heat kernel}. Settings in which the respective constants are uniform and yield versions of \eqref{Eq:GUP} addressing all above mentioned issues, e.g.~as described in \autoref{Sub:Quasi isom} (following standard arguments, see e.g.~\cite{grigoryan2009,saloffcoste1992}), are too restrictive to be satisfied by all LR manifolds.  %There is thus no hope in obtaining a general version of \eqref{Eq:GUP} which addresses all the above mentioned issues along this path. 

The following main result of our work, detailed in  \autoref{Prop:heat kernel bound}, establishes a slightly different upper bound on $\sfp$ for \emph{general} LR manifolds which rules out all three problems described above. Let us set $d :=\dim\mms$.

\begin{theorem}\label{Th:IntroTh1} There exist a constant $C>0$, depending only on $d$, and a bounded Lipschitz function $R\colon \mms \to (0,\infty)$ such that for every $x,y\in\mms$ and every $t>0$,
\begin{align*}
\sfp(t,x,y) \leq C\min\{t,R^2(x)\}^{-d/2}.
\end{align*}
\end{theorem}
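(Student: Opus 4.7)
\emph{Strategy.} The plan is to exploit that every LR manifold is, in local charts, bi-Lipschitz to an open subset of Euclidean space with distortion as close to $1$ as desired. The function $R$ should encode at each point the scale on which this comparison is valid with a fixed universal distortion; every analytic constant entering Sturm's Gaussian bound \eqref{Eq:GUP} then becomes dimensional on balls of radius $\leq R(x)$, and the decoupled form $\min\{t,R^2(x)\}^{-d/2}$ is recovered by combining these local estimates with the semigroup structure.

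\emph{Construction of $R$ and local inequalities.} For every $x\in\mms$, pick a chart $\varphi_x$ around $x$ in which the pullback of $\Rmet$ is $2$-bi-Lipschitz to the Euclidean metric. Let $r(x)\in(0,1]$ denote the supremum of radii (capped at $1$ for boundedness) for which $\Ball_{r(x)}(x)$ lies in such a chart. Since $r$ is only lower semi-continuous, I would Lipschitz-regularize by setting
\[
R(x) := \min\!\Bigl\{1,\;\tfrac{1}{2}\inf_{z\in\mms}\bigl[r(z)+\met(x,z)\bigr]\Bigr\},
\]
giving a $\tfrac{1}{2}$-Lipschitz, bounded, strictly positive function with $R(x)\leq r(x)/2$ and $R(z)\geq R(x)/2$ on $\Ball_{R(x)/2}(x)$. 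Via $\varphi_x$, the Euclidean doubling and Sobolev inequalities pull back to $\Ball_{R(x)/2}(x)$ with constants depending only on $d$.

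\emph{From local inequalities to the heat-kernel estimate.} Feeding these universal local inequalities into Sturm's or Grigor'yan's argument \cite{sturm1995,grigoryan2009,saloffcoste1992} yields the Gaussian bound
\[
\sfp(t,z,w)\leq C(d)\,t^{-d/2}\exp\!\Bigl(-\tfrac{\met^2(z,w)}{C(d)\,t}\Bigr)\qquad\text{for }z,w\in\Ball_{R(x)/2}(x),\ t\leq c(d)R^2(x),
\]
and in particular $\sfp(t,x,x)\leq C(d)\,t^{-d/2}$ for such $t$. For $y\in\Ball_{R(x)/2}(x)$ this already gives the claim. For $y$ outside, I would combine Davies' off-diagonal refinement $\sfp(t,x,y)\leq \sqrt{\sfp(t,x,x)\,\sfp(t,y,y)}\,\exp(-\met^2(x,y)/((4+\varepsilon)t))$ with the on-diagonal bound at $y$ and the constraint $\met(x,y)\geq 2(R(x)-R(y))$ coming from the Lipschitzness of $R$: whenever $R(y)\ll R(x)$, $y$ must lie far from $x$ and the Gaussian factor absorbs the possibly large prefactor $\sfp(t,y,y)\leq C(d)R(y)^{-d}$. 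Finally, the function $t\mapsto\sup_{y\in\mms}\sfp(t,x,y)$ is non-increasing, because by the semigroup identity and symmetry $\sfp(t+s,x,y)=\int\sfp(s,x,z)\,\sfp(t,z,y)\,\nld\vol(z)\leq \|\sfp(s,x,\cdot)\|_\infty$ using $\int\sfp(t,y,z)\,\nld\vol(z)\leq 1$. Thus for $t>c(d)R^2(x)$ the bound $\sup_y\sfp(t,x,y)\leq C(d)\,R(x)^{-d}$ follows from its value at $t=c(d)R^2(x)$, and both regimes combine to the theorem's estimate.

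\emph{Main obstacle.} The delicate step is controlling $\sfp(t,x,y)$ for $y$ with $R(y)\ll R(x)$: the on-diagonal estimate $\sfp(t,y,y)\leq C(d)R(y)^{-d}$ may be far from tight, and its interaction with the Gaussian factor in Davies' inequality is a case analysis whose tightness depends sensitively on the Lipschitz constant chosen in the construction of $R$. Making this cancellation uniform in $t$ and $y$ --- possibly via a Moser-type parabolic mean-value iteration on cylinders centered at $x$ rather than a symmetric Davies-Gaffney argument, or by a recursive use of the semigroup to bootstrap bounds from small $t$ to large $t$ --- is the main technical work.
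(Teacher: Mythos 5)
Your geometric preparation is sound and essentially coincides with the paper's: construct a bounded, Lipschitz ``Euclidean radius'' $R$ (the paper truncates $r_\rmE(\cdot,b)$ and shows $1$-Lipschitz\-ness directly in \autoref{lemma:propr}; your infimal-convolution regularization achieves the same), and pull the Euclidean analytic constants back to $\Ball_{R(x)}(x)$ with dimensional dependence. The place where your argument does not close is exactly the one you flag as the ``main obstacle,'' and it is a genuine dead end rather than a matter of optimizing constants. After Davies--Gaffney you are left bounding a quantity of the form $\bigl(R(y)/\sqrt{t}\bigr)^{-d/2}\exp\bigl(-c\,\met^2(x,y)/t\bigr)$. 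The Lipschitz constraint only gives $\met(x,y)\gtrsim R(x)-R(y)$, so in the relevant regime $t\sim R^2(x)$ the exponential factor is merely bounded by a fixed constant, while $R(y)^{-d/2}$ can be arbitrarily large; no choice of Lipschitz constant for $R$ produces a cancellation uniform in $y$. The symmetric Davies-type route simply requires information (an on-diagonal bound at $y$) that need not be available near the ``bad'' part of the manifold, which is precisely where one wants the theorem to say something.

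The paper avoids this entirely with a one-sided argument, which is the ``Moser-type parabolic mean-value iteration on cylinders centered at $x$'' you mention but do not carry out. Concretely, it proves an $\Ell^2$-mean value inequality for local subsolutions of the heat equation under a Faber--Krahn inequality (\autoref{Th:L^2 mean value}), upgrades it to an $\Ell^1$-mean value inequality via Li--Wang's $\Ell^2$-to-$\Ell^1$ reduction (\autoref{Th:L^p mean value}), establishes the Faber--Krahn inequality with constants depending only on $b,d$ on balls of radius $R(x)$ (\autoref{Le:FKM}, which includes a reflection argument to handle $\partial\mms\neq\emptyset$, a case your sketch does not address), and then applies the $\Ell^1$-inequality to $u(s):=\sfp(s,\cdot,y)$ on a cylinder centered at $x$ with $\tau=\min\{t,R^2(x)\}$. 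The only input about $y$ needed is the sub-stochastic bound $\int_\mms\sfp(s,\cdot,y)\,\nld\vol\leq 1$, so the estimate is automatically uniform in $y$ and the decoupled bound $\sfp(t,x,y)\leq C\tau^{-d/2}$ drops out in one step, also absorbing your separate small-$t$/large-$t$ case analysis. In short: your construction of $R$ and the idea of passing through local Faber--Krahn/Sobolev data are correct and parallel to the paper, but the Davies--Gaffney step must be replaced by the $\Ell^1$-mean value inequality for this proof to close.
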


To the best of our knowledge, so far \autoref{Th:IntroTh1} has only been proven for smooth Riemannian manifolds without boundary \cite{guneysu2017b}. In particular, \autoref{Th:IntroTh1} seems to be new even in the setting of smooth manifolds with boundary. 

The proof of \autoref{Th:IntroTh1}, presented in \autoref{Ch:Control pairs}, and  its idea have been inspired by \cite[Thm.~15.4]{grigoryan2009} and  \cite[Thm.~IV.14]{guneysu2017}. We first prove the existence of some $a>0$ such that, given any $x\in\mms$, we have the \emph{Faber--Krahn inequality}
\begin{align}\label{Eq:FK intro}
\lambda_1(O) \geq a\,\vol[O]^{-2/d}
\end{align}
for every open, relatively compact $O\subset\mms$ contained in a suitable neighbor\-hood of $x$. (Here, $\lambda_1(O)$ denotes the first Dirichlet eigenvalue on $O$.) % However, both versions clearly hold --- in fact, are equivalent --- on $\smash{(\R^d, \RRmet)}$, where $\RRmet$ is an arbitrary Riemannian metric  equivalent to the standard metric $\smash{\Rmet^\rmE}$, i.e.~$\smash{b^{-1}\,\Rmet^\rmE \leq \RRmet \leq b\,\Rmet^\rmE}$ for some $b> 1$ \cite{grigoryan2009,saloffcoste1992}. 
Indeed, given any fixed $b>0$, modulo a reflection argument in the case $\partial\mms \neq \emptyset$, we prove that  $(\mms,\Rmet)$ locally looks like a Euclidean space $\smash{(\R^d,\RRmet)}$ up to a rescaling of $\Rmet$, cf.~\autoref{Le:FKM}. Here $\RRmet$ is an arbitrary Riemannian metric  equivalent to the standard Euclidean metric tensor $\smash{\Rmet^\rmE}$, i.e.~$\smash{b^{-1}\,\Rmet^\rmE \leq \RRmet \leq b\,\Rmet^\rmE}$, w.r.t.~which \eqref{Eq:FK intro} is well-known to hold on $\smash{(\R^d,\RRmet)}$ \cite{grigoryan2009,saloffcoste1992}.  Of course, the size of the domain itself in which $O$ is allowed to be situated may vary over $\mms$. However, it is controllable in terms of the \emph{Euclidean radius} $\smash{r_\rmE}$, cf. \autoref{Def:Eucl rad}: given $x\in\mms$, $r_\rmE(x,b)$ is naturally defined as the largest possible radius $r>0$ w.r.t.~which, roughly speaking, $\Ball_r(x)$ is quasi-isometric, with constants $\smash{b^{-1}}$ and $b$, to a subset of $\smash{(\R^d,\RRmet)}$.  (As shown in \autoref{lemma:propr}, truncated versions $R$ of $\smash{r_\rmE}$ obey good  regularity properties.) This  allows us to pull \eqref{Eq:FK intro} locally back to $\mms$ while retaining  a certain uniformity in $a$, which is possible by the fact that $b$ is fixed. Then, to get a (Li--Yau-type) heat kernel estimate which takes this spacial variation into account, we use an $\Ell^1$-mean value inequality for local subsolutions to the heat equation (see \autoref{Th:L^p mean value}) derived from its $\Ell^2$-counterpart (see \autoref{Th:L^2 mean value}) by Li--Wang's $\Ell^2$-to-$\Ell^1$ reduction  \cite{guneysu2017b,li1999}.

We point out that these arguments crucially exploit the LR structure of $(\mms,\Rmet)$, whence this setting seems to be as general as possible to prove \autoref{Th:IntroTh1}. Furthermore, \autoref{Th:IntroTh1} is quite concrete and well-suited for applications, in the sense that $r_\rmE$ --- hence $R$ --- is  in general  easy to bound from below  explicitly.

\subsubsection*{Kato class and heat kernel control pairs} One of our main motivations to establish \autoref{Th:IntroTh1} and to address the mentioned nonuniformities in \eqref{Eq:GUP}  is to understand better the Kato class of LR manifolds $(\mms,\Rmet)$.

\begin{definition}\label{Def:KatoIntro} The \emph{Kato class} $\Kato(\mms)$ consists of all signed Borel measures $\kappa$ on $\mms$ which do not charge $\Ch$-polar sets and such that
\begin{align*}
\lim_{t\to 0} \sup_{x\in\mms} \int_0^t\!\!\int_\mms\sfp(s,x,\cdot)\d\vert\kappa\vert\d s =0.
\end{align*}
\end{definition}

The prominent role of the Kato class in mathematical physics has been well established in the past decades in a series of works \cite{aizenman1982,kato1972,stollmann1996}. Recently, its interest in the context of singular Ricci bounds has grown as well \cite{braun2021, braun2020, carron2019, carronrose, ERST20, guneysu2017, guneysu2020, rose2019} (also with applications to Ricci flow and general relativity). The study of the class of LR manifolds in this latter setting constitutes the last part of our work.%, as we describe in the following.

It is easy to see that $\Ell^\infty(\mms)\,\vol \subset\Kato(\mms)$. To verify from \autoref{Def:KatoIntro} that a given $\kappa$ does belong to $\Kato(\mms)$ beyond this trivial case, the importance of having appropriate upper bounds for $\sfp$ according to the criteria raised above is  intuitively clear. In concrete words, in \autoref{Th:Kato criterion} we  identify an explicit subspace of $\Kato(\mms)$ for a given \emph{heat kernel control pair} $(\Xi,\Sigma)$, i.e.~an ordered pair of suitable functions $\Xi\colon \mms\to (0,\infty)$ and $\Sigma\colon (0,1]\to (0,\infty)$ for which there exists some $C>0$ such that
\begin{align*}
\sfp(t,x,y) \leq C\,\Sigma(t)\,\Xi(x)
\end{align*}
for every $x,y\in\mms$ and every $t\in (0,1]$, see \autoref{Def:Heat kernel control pair}. 

\begin{theorem}\label{Th:In2} Suppose that $d\geq 2$, and assume the  existence of some heat kernel control pair $(\Xi,\Sigma)$. Then $\smash{\Ell^p(\mms, \Xi\,\vol)\,\vol + \Ell^\infty(\mms)\,\vol \subset \Kato(\mms)}$ for every $p\in (d/2,\infty)$. 
\end{theorem}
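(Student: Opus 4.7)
The plan is to decompose any signed Borel measure of the form $\kappa = f\,\vol + g\,\vol$ with $f\in\Ell^p(\mms,\Xi\,\vol)$ and $g\in\Ell^\infty(\mms)$, and verify the Kato integrability condition from \autoref{Def:KatoIntro} for each summand separately. Since both $\vert f\vert\,\vol$ and $\vert g\vert\,\vol$ are absolutely continuous w.r.t.~$\vol$, and $\Ch$-polar sets are $\vol$-negligible, neither summand charges polar sets, so only the uniform vanishing of the time integral needs to be verified.

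First, I would dispose of the $\Ell^\infty$ piece at once. Since the heat semigroup is sub-Markovian, $\int_\mms \sfp(s,x,\cdot)\d\vol \leq 1$ for every $s>0$ and every $x\in\mms$, whence
\begin{align*}
\sup_{x\in\mms}\int_0^t\!\!\int_\mms \sfp(s,x,y)\,\vert g(y)\vert\d\vol(y)\d s \leq t\,\lVert g\rVert_{\Ell^\infty(\mms)}\longrightarrow 0
\end{align*}
as $t\to 0$.

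For the $\Ell^p(\Xi\,\vol)$ part, let $q$ denote the Hölder conjugate exponent of $p$. Applying Hölder's inequality in the form
\begin{align*}
\int_\mms\sfp(s,x,y)\,\vert f(y)\vert\d\vol(y)\leq\Big(\!\int_\mms\sfp(s,x,y)\d\vol(y)\Big)^{1/q}\Big(\!\int_\mms\sfp(s,x,y)\,\vert f(y)\vert^p\d\vol(y)\Big)^{1/p},
\end{align*}
the first factor is bounded by $1$ again by sub-Markovianity. For the second factor, the crucial input is the symmetry $\sfp(s,x,y)=\sfp(s,y,x)$, which combined with the heat kernel control pair hypothesis yields $\sfp(s,x,y)\leq C\,\Sigma(s)\,\Xi(y)$ for every $s\in(0,1]$. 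This produces an estimate uniform in $x$:
\begin{align*}
\sup_{x\in\mms}\int_0^t\!\!\int_\mms\sfp(s,x,y)\,\vert f(y)\vert\d\vol(y)\d s \leq C^{1/p}\,\lVert f\rVert_{\Ell^p(\mms,\Xi\,\vol)}\int_0^t\Sigma(s)^{1/p}\d s.
\end{align*}

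The main obstacle is to guarantee the integrability of $\Sigma^{1/p}$ on $(0,1]$, which is exactly where the hypothesis $p>d/2$ enters: with $\Sigma(s)$ of order $s^{-d/2}$---the natural scale arising from \autoref{Th:IntroTh1} and encoded in \autoref{Def:Heat kernel control pair}---the integrand behaves like $s^{-d/(2p)}$ near $0$, and $d/(2p)<1$ is equivalent to $p>d/2$. The assumption $d\geq 2$ is what ensures that the admissible range $(d/2,\infty)$ is nonempty, and the conclusion follows since the right-hand side then tends to $0$ as $t\to 0$.
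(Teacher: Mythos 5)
Your argument is correct and follows essentially the same route as the paper's proof of \autoref{Th:Kato criterion}: decompose $\kappa$ into an $\Ell^p(\mms,\Xi\,\vol)$ piece and an $\Ell^\infty(\mms)$ piece, dispose of the latter by sub-Markovianity, and bound the former via H\"older's inequality together with the symmetry of $\sfp$, the control-pair bound \eqref{Eq:Heat kernel bound control pair}, and the small-time integrability of $\Sigma^{1/p}$ guaranteed by item~\ref{La:Integr caln} of \autoref{Def:Heat kernel control pair}. One side remark is slightly off: $(d/2,\infty)$ is nonempty for every $d$; the actual role of $d\geq 2$ here is that it forces $p>d/2\geq 1$, so the H\"older conjugate $q$ is a genuine exponent and your application of H\"older's inequality is legitimate (for $d=1$ the paper takes $p\in[1,\infty)$ and gets the bound directly from \eqref{Eq:Heat kernel bound control pair} without H\"older).
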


By an evident choice of $(\Xi,\Sigma)$, this result recovers the well-known fact that $\smash{\Ell^p(\R^d)\,\mathcall{L}^d\subset \Kato(\R^d)}$ for every $p\in (d/2,\infty]$ \cite{aizenman1982}.

As an immediate consequence of our \autoref{Th:IntroTh1}, the second hypothesis of \autoref{Th:In2} is \emph{always} satisfied,  cf.~\autoref{prop:Existence control pair}: this in turn implies that, up to the evident identification, $\Kato(\mms)$ always contains certain weighted $\Ell^p$-spaces.

\begin{corollary}
Every LR manifold admits a canonical heat kernel control pair.
\end{corollary}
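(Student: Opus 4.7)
The plan is to extract the desired pair $(\Xi,\Sigma)$ directly from the uniform bound of \autoref{Th:IntroTh1}, the only real work being to rewrite $\min\{t,R^{2}(x)\}^{-d/2}$ as a product of a function of $t$ alone and a function of $x$ alone, up to a multiplicative constant, on the range $t\in(0,1]$ relevant for \autoref{Def:Heat kernel control pair}.

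First I would invoke \autoref{Th:IntroTh1} to fix $C_{0}>0$ (depending only on $d$) and a bounded positive Lipschitz function $R\colon\mms\to(0,\infty)$ satisfying $\sfp(t,x,y)\le C_{0}\,\min\{t,R^{2}(x)\}^{-d/2}$ for every $t>0$ and every $x,y\in\mms$. Setting $R_{\max}:=\Vert R\Vert_{\infty}<\infty$, I would then observe the elementary identity
\[\min\{t,R^{2}(x)\}^{-d/2}=\max\{t^{-d/2},R^{-d}(x)\},\]
and note that for $(t,x)\in(0,1]\times\mms$ the two trivial inequalities $R^{-d}(x)\le t^{-d/2}R^{-d}(x)$ (valid because $t\le 1$) and $t^{-d/2}\le R_{\max}^{d}\,t^{-d/2}R^{-d}(x)$ (valid because $R(x)\le R_{\max}$) together yield
\[\max\{t^{-d/2},R^{-d}(x)\}\le(1\vee R_{\max}^{d})\,t^{-d/2}\,R^{-d}(x).\]

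With this, the natural choices $\Sigma(t):=t^{-d/2}$ on $(0,1]$ and $\Xi(x):=R^{-d}(x)$ on $\mms$ deliver two positive functions --- smooth in $t$, locally Lipschitz in $x$ --- for which the heat kernel bound
\[\sfp(t,x,y)\le C_{0}\,(1\vee R_{\max}^{d})\,\Sigma(t)\,\Xi(x)\]
holds uniformly on $(0,1]\times\mms\times\mms$. This is exactly the defining inequality of a heat kernel control pair, and the pair is canonical because $R$ itself is built, via \autoref{Th:IntroTh1}, from a canonical truncation of the Euclidean radius $r_{\rmE}$ introduced earlier. I anticipate no essential obstacle beyond verifying the minimal positivity/measurability conventions hidden behind the phrase ``suitable functions'' in \autoref{Def:Heat kernel control pair}, all of which are transparent for our $\Xi$ and $\Sigma$.
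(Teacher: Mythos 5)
Your argument is correct and follows essentially the same route as the paper: both proofs set $\Xi:=R^{-d}$ (with $R$ from \autoref{Prop:heat kernel bound}) and decouple $\min\{t,R^{2}(x)\}^{-d/2}$ into a product using the boundedness of $R$. The only cosmetic difference is in the form of $\Sigma$: the paper writes $\min\{t,R^{2}\}^{-d/2}\le t^{-d/2}+R^{-d}$ and factors out $R^{-d}$ using $R\le\varepsilon_1/\varepsilon_2$ to get $\Sigma(t)=1+\varepsilon_1^{d}\varepsilon_2^{-d}t^{-d/2}$, whereas you use $t\le 1$ to absorb the maximum into $(1\vee R_{\max}^{d})\,t^{-d/2}R^{-d}(x)$ and take the cleaner $\Sigma(t)=t^{-d/2}$; the two choices of $\Sigma$ are comparable on $(0,1]$ and both satisfy the integrability requirement in item~\ref{La:Integr caln}.
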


%In ??? we find it useful (following standard arguments, see e.g.~\cite{grigoryan2009,saloffcoste1992})
 
\subsubsection*{Taming for LR manifolds} The  notion of \emph{tamed spaces}, summarized in  \autoref{Sec:Taming}, has recently been introduced in \cite{ERST20}. It offers a synthetic way of speaking about the Ricci curvature of a  Dirich\-let space being bounded from below by a distribution $\kappa$. This machinery provides a far-reaching generalization of the Bakry--\smash{Émery} theory for diffusion operators \cite{bakry} and its Eulerian counterpart for RCD spaces \cite{ambrosio}, formulated in terms of Schrödinger operators by a weak form of the $1$-Bochner  inequality
\begin{align}\label{Eq:IntroBochner}
	(\Delta - \kappa)\vert\rmd f\vert  - \Rmet^*(\rmd f,\rmd \Delta f)\,\vert\rmd f\vert^{-1} \geq 0.
\end{align}

Since elements in $\Kato(\mms)$ provide special cases of the  relevant  distribu\-tions considered in \cite{ERST20} and since Ricci bounds have powerful  probabilistic, analytic and geometric consequences, we believe our following condition for taming of LR manifolds, cf.~\autoref{Th:Tamed convex boundary} below, to be of high interest.

\begin{theorem}\label{Th:Th}	Assume that $d\geq 2$ and that $(\mms,\Rmet)$ is \emph{almost smooth}, i.e.~it contains a $\vol$-conegligible, open, smooth Riemannian manifold $\smash{(\mms^\infty,\Rmet\big\vert_{\mms^\infty})}$. Let $\calk\in \Ell^p(\mms,\Xi\,\vol) + \Ell^\infty(\mms)$, where $p\in (d/2,\infty)$ and  $(\Xi,\Sigma)$ is given a heat kernel control pair, be a lower bound of the Ricci curvature on $\mms^\infty$, i.e. 
\begin{align*}
\Ric \geq \calk\quad\text{on }\mms^\infty.
\end{align*}
Lastly, suppose that the Laplacian $\Delta$ is essentially self-adjoint on $\smash{\Cont_\comp^\infty(\mms^\infty)}$. Then the Dirichlet space $(\mms,\Ch,\vol)$ is tamed by 
	\begin{align*}
		\kappa := \calk\,\meas.
	\end{align*}
\end{theorem}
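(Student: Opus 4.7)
The plan is to verify the defining weak $1$-Bochner inequality of the taming framework from \cite{ERST20} for the Dirichlet space $(\mms,\Ch,\vol)$ against $\kappa = \calk\,\meas$. Two independent ingredients are needed: (i) $\kappa$ qualifies as a moderate signed measure, and (ii) the $1$-Bochner inequality holds in the appropriate weak sense. Ingredient (i) follows immediately from \autoref{Th:In2}: decomposing $\calk = \calk_1 + \calk_2$ with $\calk_1 \in \Ell^p(\mms,\Xi\,\vol)$ and $\calk_2 \in \Ell^\infty(\mms)$ and taking positive and negative parts, both $\kappa^\pm$ lie in $\Kato(\mms)$. This makes the Schrödinger perturbation $\Delta - \kappa$ well-defined and ensures that $\Ch$-polar sets are $\vert\kappa\vert$-negligible, which is essential when integrating $\vert\rmd f\vert$ against $\kappa$.

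For ingredient (ii), I exploit the almost-smoothness hypothesis. On the smooth Riemannian manifold $(\mms^\infty, \Rmet\vert_{\mms^\infty})$, the classical Bochner identity
\[\tfrac{1}{2}\,\Delta\vert\rmd f\vert^2 - \Rmet^*(\rmd f, \rmd \Delta f) = \vert\Hess f\vert^2 + \Ric(\nabla f, \nabla f)\]
holds pointwise for every $f \in \Cont_\comp^\infty(\mms^\infty)$. Combining $\Ric \geq \calk$ on $\mms^\infty$ with the refined Kato inequality $\vert\Hess f\vert^2 \geq \vert\nabla\vert\nabla f\vert\vert^2$, dividing by $\vert\rmd f\vert$ on the open set $\{\vert\rmd f\vert>0\}$, and applying a standard truncation/limiting argument at $\{\vert\rmd f\vert=0\}$, I obtain the pointwise $1$-Bochner inequality
\[(\Delta - \calk)\,\vert\rmd f\vert - \Rmet^*(\rmd f, \rmd \Delta f)\,\vert\rmd f\vert^{-1} \geq 0\]
in the distributional sense on $\mms^\infty$.

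To transfer this to a global statement on $\mms$ I invoke the essential self-adjointness of $\Delta$ on $\Cont_\comp^\infty(\mms^\infty)$: this makes $\Cont_\comp^\infty(\mms^\infty)$ a core for the Dirichlet Laplacian of $(\mms,\Ch,\vol)$, so any admissible test function from the taming framework can be approached in the graph norm by smooth functions compactly supported in $\mms^\infty$. Since $\vol(\mms\setminus \mms^\infty)=0$ and $\vert\kappa\vert$ charges no $\Ch$-polar set, the integrated $1$-Bochner inequality on $\mms$ is inherited from its restriction to $\mms^\infty$ by density. The main obstacle I anticipate is precisely this density step: one must match the class of admissible test functions of the taming framework with $\Cont_\comp^\infty(\mms^\infty)$, and check continuity of the perturbation term $\int \phi\,\vert\rmd f\vert\d\kappa$ along graph-norm approximations. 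The first is exactly what essential self-adjointness buys; the second is delivered by the Kato-class control of $\vert\kappa\vert$, which via \autoref{Th:In2} ultimately rests on the uniform heat-kernel bound of \autoref{Th:IntroTh1}.
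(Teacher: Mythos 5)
Your proposal follows the paper's general architecture (Kato class membership via \autoref{Th:Kato criterion}, Bochner on $\mms^\infty$, a density step via essential self-adjointness), but it misses the decisive technical lemma and slightly misidentifies what essential self-adjointness actually buys.

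The key step you skip is the a priori regularity statement of \autoref{Le:Reg lemma}: for every $f\in\Dom(\Delta)$, one must have $\vert\rmd f\vert\in W^{1,2}(\mms)$. This is what makes the integration by parts that produces the $\BE_1$ form well posed --- the pairing $\Ch^\kappa(\phi,\vert\rmd f\vert)$ only makes sense once $\vert\rmd f\vert$ has a first-order weak derivative in $\Ell^2$. Your proposal treats essential self-adjointness as a tool for approximating the test function $\phi$ in graph norm by elements of $\Cont_\comp^\infty(\mms^\infty)$; but $\phi$ lives in $\Dom(\Delta^\kappa)$, not $\Dom(\Delta)$, and essential self-adjointness of $\Delta$ on $\Cont_\comp^\infty(\mms^\infty)$ gives a core for $\Delta$, not for $\Delta^\kappa$. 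In the paper, essential self-adjointness is instead used to approximate the ``differentiated'' function $f\in\Dom(\Delta)$ by $f_n\in\Cont_\comp^\infty(\mms^\infty)$: one first derives, for smooth compactly supported $f$, an a priori estimate (the integrated Bochner bound \eqref{Eq:IBoch}) for $\Vert\vert\rmd f\vert\Vert_{W^{1,2}}$ in terms of $\Vert\Delta f\Vert_{\Ell^2}$ and $\Vert\vert\rmd f\vert\Vert_{\Ell^2}$, and then passes to general $f\in\Dom(\Delta)$ by lower semicontinuity of $\Ch$ under pointwise a.e.~convergence. Your observation that ``continuity of the perturbation term $\int\phi\,\vert\rmd f\vert\d\kappa$ along graph-norm approximations'' is the crux is correct, but this continuity is precisely what the unproved regularity delivers, together with \autoref{Le:Form boundedness}.

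A smaller point: ``dividing by $\vert\rmd f\vert$ on $\{\vert\rmd f\vert>0\}$ and applying a standard truncation/limiting argument at $\{\vert\rmd f\vert=0\}$'' hides a genuine technical issue. The paper's mechanism is the regularization $\eta_\varepsilon(r)=(r+\varepsilon)^{1/2}$ applied to $\vert\rmd f\vert^2$, and the specific convexity and scaling properties ($4\,\eta_\varepsilon''\,r\geq -2\,\eta_\varepsilon'$, $1/\eta_\varepsilon'=2\,\eta_\varepsilon$) are used to absorb the Hessian term after the Kato inequality and to perform the chain rule for the Laplacian cleanly. Finally, the proof does not derive the global $\BE_1$ inequality for all $f\in\F$ with $\Delta f\in\F$ directly; one first establishes the inequality for $f\in\bigcup_{t>0}\ChHeat_t\Ell^2(\mms)$ (smooth on $\mms^\infty$ by local elliptic regularity) against boundedly supported Lipschitz $\phi$, then extends over $\phi\in W^{1,2}(\mms)$, then over $\phi\in\Dom(\Delta^\kappa)$, and finishes by invoking the reduction scheme of \cite[Thm.~3.4]{ERST20}. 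Without the intermediate regularity lemma and this scaffolding, the density step you anticipate as ``the main obstacle'' does not go through.
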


We briefly comment on the assumptions of \autoref{Th:Th} as well as on its proof, given in \autoref{Sub:An interior} below. The almost smoothness of $(\mms,\Rmet)$ is needed to set up a $\vol$-a.e.~Bochner \emph{identity} on its smooth part, which is the starting point to derive \eqref{Eq:IntroBochner}. To integrate the inherent  Schrödinger term ``$(\Delta-2\calk)\,\vert\rmd f\vert^2/2$''  by parts and to simultaneously apply the chain rule to pass over to ``$(\Delta-\calk)\,\vert \rmd f\vert$'' we will \emph{a priori} require  the $W^{1,2}$-regularity of $\vert\rmd f\vert$ for every $f\in\Dom(\Delta)$, as discussed in \autoref{Le:Reg lemma}. The essential self-adjointness of $\Delta$ on $\smash{\Cont_\comp^\infty(\mms^\infty)}$, which ensures the previous regularity as shown in \cite{hess1980}, is then used to extend this statement to arbitrary elements of $\Dom(\Delta)$, through an integrated form of \eqref{Eq:IntroBochner}.

The hypothesis on essential self-adjointness  in \autoref{Th:Th} is less restrictive than it might seem. Indeed, \autoref{Th:Th} already applies to \emph{regular almost Riemannian structures} \cite{prandi2018}, i.e.~manifolds which admit a smooth structure, but are endowed with a possibly singular metric. Actually already in the smooth case, interesting tamed  situations are covered: indeed, essential self-adjointness holds for example in the case in which $\mms$ arises from a smooth Riemannian manifold from which a closed set of codimension at least $4$ is removed \cite{masamune1999} (see \autoref{Ex:Sing I} and \autoref{Ex:Sing II} for details). In particular, we cover  smooth examples of tamed spaces with constant lower Ricci bounds \emph{beyond}  complete manifolds, while classical  examples in Bakry--\smash{Émery's} theory typically require completeness (which is also always part of the assumptions in the correspondent framework  of RCD spaces), a property implying  essential self-adjointness \cite{strichartz}. 

Lastly, \autoref{Sub:A boundary} deals with taming of certain LR manifolds with boundary, see \autoref{Th:Taming bdry qi}. Here, compared to the $\vol$-absolutely continuous case from \autoref{Th:In2}, it is even more sophisticated to figure out when certain $\Ell^q$-spaces w.r.t.~the surface measure $\sigma$ on $\partial\mms$ belong to $\Kato(\mms)$. This is addressed in \autoref{Th:Surface measure}.

\subsubsection*{Organization} In \autoref{Sub:Calculus on}, we give a concise overview of LR manifolds and all induced analytic objects needed for our purposes. \autoref{Ch:Control pairs} introduces the concept of control pairs for the given heat kernel and shows the general existence of such a pair. Lastly, in \autoref{Ch:Taming} we briefly recapitulate basic notions about tamed spaces from \cite{ERST20} and provide sufficient conditions for taming of LR manifolds.

\subsubsection*{Acknowledgments} Both authors gratefully acknowledge funding by the European Research Council through the ERC-AdG ``Ricci\-Bounds''. The second author was partially supported by the Austrian Science Fund (FWF) through project  F65.

\section{Calculus on Lipschitz manifolds}\label{Sub:Calculus on}

This preliminary chapter is a survey over basic facts, considered as standard and thus mostly stated without proof, about the framework of LR manifolds we work in all over the paper. 

In this article, we do mostly not keep track of involved uniform constants (albeit we always try to be clear about their dependencies) and, by convention, allow them to change from line to line without reflecting this change in the notation.

\subsection{LR manifolds}\label{Sub:Lip R  mflds} Following \cite{DCP,DCP2,dececco1991,luukkainen1977, rosenberg1987,sullivan1979,teleman1983}, we first introduce  relevant notions of \emph{LR manifolds}. 

\subsubsection{Lipschitz manifolds}

\begin{definition}\label{def:Lipman}
A topological Hausdorff space $\mms$ is called \emph{\textnormal{(}$d$-dimensional\textnormal{)} Lipschitz manifold}, $d\in\N$, if its topology is second countable and if it is endowed with an atlas $\calU := (U_\alpha,\Phi_\alpha)_{\alpha \in A}$ such that 
\begin{enumerate}[label=\textnormal{\alph*.}]
\item $(U_\alpha)_{\alpha\in A}$ is an open cover of $\mms$,
\item\label{La:homeo} $\Phi_\alpha\colon U_\alpha \to \Phi_\alpha(U_\alpha)$ is a homeomorphism between $U_\alpha$ and an open subset $\Phi_\alpha(U_\alpha)$ of $\smash{\R^d}$ or $\smash{\R^{d-1}\times [0,\infty)}$ for every $\alpha\in A$, and 
\item\label{La:Lip homeo} the transition map $\smash{\Phi_{\alpha,\alpha'}\colon \Phi_\alpha(U_\alpha\cap U_{\alpha'})\to \Phi_{\alpha'}(U_\alpha\cap U_{\alpha'})}$, where
\begin{align*}
\Phi_{\alpha,\alpha'} := \Phi_{\alpha'}\circ\Phi_\alpha^{-1},
\end{align*}
is a locally Lipschitz map for every $\alpha,\alpha'\in A$.
\end{enumerate}
\end{definition}

Item \ref{La:homeo}~allows $\mms$ to possess vertices, edges or non-isolated conical points. In par\-ticular, Lipschitz manifolds are prototypes of topological manifolds and the presence of a boundary or  some corners is \emph{not} excluded. When $\mms$ is compact, the transition functions in  \ref{La:Lip homeo}~can be taken to be globally bi-Lipschitz. 

Any Lipschitz manifold is henceforth assumed to be connected and paracompact. As the topology of $\mms$ is second countable, $\calU$ may -- and thus will always -- be chosen to be locally finite with  countable sub-atlas. On the other hand, following \cite{norris1997}, completeness of $\mms$ is \emph{not} assumed %, neither in the chartwise nor in the metric sense  
 (cf.~\autoref{Sub:Metric structure} below).

Every Lipschitz manifold  of dimension $d$ is Lipschitz homeomorphic to a subset of $\smash{\R^{d(d+1)}}$ that admits locally bi-Lipschitz parametrizations, cf.~\autoref{Sub:Lip maps} below, by $\smash{\R^d}$ \cite[Thm.~4.2, Thm.~4.5]{luukkainen1977}.

\subsubsection{Differential forms}\label{Sub:Diff forms}  Let $\mms$ denote a Lipschitz manifold with an atlas $\calU := (U_\alpha,\Phi_\alpha)_{\alpha\in A}$.  A \emph{$k$-form}, $k\in\N$, on $\mms$ is a family $\omega := (\omega_\alpha)_{\alpha\in A}$ consisting of maps $\omega_\alpha\colon\Phi_\alpha(U_\alpha) \to \smash{\bigsqcup_{z\in \Phi_\alpha(U_\alpha)}\Lambda^k(\R^d)^*}$ with Borel coefficients such that 
\begin{align*}
\Phi_{\alpha,\alpha'}^*\,\omega_{\alpha'} = \omega_\alpha\quad\Leb^d\text{-a.e.}\quad\text{on }\Phi_\alpha(U_\alpha\cap U_{\alpha'})
\end{align*}
for every $\alpha,\alpha'\in A$. The pull-back $\smash{\Phi_{\alpha,\alpha'}^*}$ by $\Phi_{\alpha,\alpha'}$ makes sense $\Leb^d$-a.e.~by Rade\-macher's theorem \cite[p.~272]{whitney1957}. 

\begin{remark}
To be very precise, a $k$-form should be regarded as equivalence class w.r.t.~$\Leb^d$-a.e.~equality of its components $\omega_\alpha$, $\alpha\in A$, on every chart. Without further notice, in the sequel this will be the  precise interpretation of \emph{all} objects which are defined up to $\smash{\Leb^d}$-a.e.~equality on charts.
\end{remark}

Such an $\omega$  is \emph{smooth} if for every $\alpha\in A$, $\omega_\alpha$ has smooth coefficients in its local representation w.r.t.~Euclidean coordinates on $\Phi_\alpha(U_\alpha)$; if $\smash{\Phi_\alpha(U_\alpha)}$ is not  open in $\smash{\R^d}$, this means the existence of smooth extensions of all coefficients of $\omega_\alpha$ to an open subset of $\smash{\R^d}$. It is  \emph{essentially bounded} if $\smash{\sup_{\alpha\in A} \Leb^d\text{-}\!\esssup \vert \omega_\alpha\vert_{\Rmet^\rmE}\circ\Phi_\alpha(U_\alpha) < \infty}$. Here $\smash{\big\vert\cdot\big\vert_{\Rmet^\rmE}^2 := (\Rmet^\rmE)^*(\cdot,\cdot)}$, where $(\Rmet^\rmE)^*$ is the co-metric induced by the standard Euclidean metric tensor $\Rmet^\rmE$.

Multiplication of a $k$-form $\omega$ on $\mms$ by a Borel  function $\psi\colon\mms\to\R$ is understood chartwise, i.e.~$\psi\,\omega := (\psi\circ\Phi_\alpha^{-1}\,\omega_\alpha)_{\alpha\in A}$ is still a $k$-form on $\mms$. In turn, $k$-forms on $\mms$ are evidently defined to be  \emph{compactly supported} or to \emph{coincide a.e.} (on Borel subsets of $\mms$), respectively.

\subsubsection{Riemannian metrics}\label{Sub:Lip Riem mflds}

\begin{definition}\label{Def:Riem metrics} A \emph{Riemannian metric} $\Rmet := (\Rmet_\alpha)_{\alpha\in A}$ on $\mms$ consists of a family of scalar products $\Rmet_\alpha\colon \Phi_\alpha(U_\alpha) \to \smash{\bigsqcup_{z\in\Phi_\alpha(U_\alpha)}{(\R^d)^{*\otimes 2}}}$ with Borel  coefficients which is \emph{compatible} with $\calU$, i.e.~for every $\alpha,\alpha'\in A$,
\begin{align}\label{Eq:Compatibility condition}
(\Phi_{\alpha,\alpha'})_*\,\Rmet_\alpha = \Rmet_{\alpha'}\quad\Leb^d\text{-a.e.}\quad\text{on }\Phi_{\alpha'}(U_\alpha \cap U_{\alpha'}).
\end{align}
\end{definition}

Here $\smash{(\Phi_{\alpha,\alpha'})_*}$ designates the usual push-forward operation by $\smash{\Phi_{\alpha,\alpha'}}$ which, as in \autoref{Sub:Diff forms}, is well-defined $\smash{\Leb^d}$-a.e. 

Let $\Rmet := (\Rmet_\alpha)_{\alpha\in A}$ be a Riemannian metric on $\mms$. Any $k$-form $\omega$, $k\in\N$, on $\mms$ comes with a Borel function $\vert \omega \vert\colon \mms\to\R$ such that for every $\alpha\in A$,
\begin{align}\label{Eq:Ptw norm def}
\vert\omega\vert\circ\Phi_\alpha^{-1} = \vert\omega_\alpha\vert_{\Rmet_\alpha}\quad\Leb^d\text{-a.e.}\quad\text{on }\Phi_\alpha(U_\alpha).
\end{align}
Here $\smash{\big\vert\cdot\big\vert_{\Rmet_\alpha}^2 := \Rmet_\alpha^*(\cdot,\cdot)}$, where $\smash{\Rmet^*_\alpha}$ is the co-metric associated with $\Rmet_\alpha$.

If a smooth $k$-form $\omega$ on $\mms$ has compact support in $U_\alpha$, $\alpha\in A$, we set
\begin{align*}
\big\Vert \omega\big\Vert_{\Rmet^\rmE}^2 &:= \int_{\Phi_\alpha(U_\alpha)} \omega_\alpha\wedge\star\omega_\alpha = \int_{\Phi_\alpha(U_\alpha)} \big\vert\omega_\alpha\big\vert_{\Rmet^\rmE}^2 \d\Leb^d,\\
\big\Vert\omega\big\Vert_{\Rmet_\alpha}^2 &:= \int_{\Phi_\alpha(U_\alpha)} \omega_\alpha\wedge\star_\alpha\omega_\alpha = \int_{\Phi_\alpha(U_\alpha)} \big\vert \omega_\alpha\big\vert_{\Rmet_\alpha}^2\,\sqrt{\det\Rmet_\alpha}\d\Leb^d.
\end{align*}
Here, $\star$ and ${\star}_{\alpha}$ are the Hodge operators of the Euclidean metric and of $\Rmet_\alpha$,  respec\-tive\-ly. Following \cite[Sec.~1.4]{dececco1991}, we always assume the following \emph{local Lipschitz} condition on $\Rmet$: for every $\alpha\in A$, there exists a constant $c_\alpha \geq 1$ such that for every smooth $1$-form $\omega$ on $\mms$ with compact support in $U_\alpha$,
\begin{align*}
c_\alpha^{-1}\, \| \omega \|_{\Rmet^\rmE} \le \| \omega \|_{\Rmet_\alpha} \le c_\alpha\, \| \omega \|_{\Rmet^\rmE}.
\end{align*}

\begin{remark}\label{rmk:lambda} Under the previous hypothesis,  by \cite[Thm.~3.1]{DCP}, for every $\alpha\in A$ there exists a  constant $\lambda_\alpha\geq  1$ such that for every $\smash{\xi\in\R^d}$,
\begin{align*}
\lambda_\alpha^{-1}\,\vert\xi\vert^2 \leq \Rmet_\alpha(\xi,\xi) \leq \lambda_\alpha\,\vert\xi\vert^2\quad\Leb^d\text{-a.e.}\quad\text{on }\Phi_\alpha(U_\alpha).
\end{align*}
\end{remark}

\begin{definition}\label{Def:Lip R mfld}
A pair $(\mms,\Rmet)$ is called \emph{Lipschitz Riemannian manifold}, briefly LR manifold, if $\mms$ is a Lipschitz manifold and $\Rmet$ is a locally Lipschitz Riemannian metric, briefly LR metric, on it.
\end{definition}

\subsubsection{Volume measure}\label{Sub:Volume} Given any bounded Borel function $f\colon \mms\to [0,\infty)$ with support in $U_\alpha$, $\alpha\in A$, we set
\begin{align}\label{Eq:Volg}
\int_\mms f\d\vol_\Rmet := \int_{\Phi_\alpha(U_\alpha)} f\circ\Phi_\alpha^{-1}\,\sqrt{\det\Rmet_\alpha}\d\Leb^d.
\end{align}
By \eqref{Eq:Compatibility condition} and a change of variables, if $f$ is supported in $U_{\alpha'}$ as well, $\alpha'\in A$, then the r.h.s.'s of \eqref{Eq:Volg} for $\alpha$ and $\alpha'$ agree. Using a partition of unity, we extend \eqref{Eq:Volg} to all bounded, nonnegative Borelian $f$. This procedure yields a unique fully supported, $\sigma$-finite  Radon measure $\vol_\Rmet$ on $\mms$ which we will call \emph{volume measure}. Whenever the metric $\Rmet$ is understood, we simply write $\vol$ instead of $\vol_\Rmet$.

At various occasions, instead of $\vol$ one could also consider a more general Radon measure $\meas$ on $\mms$ which is \emph{locally equivalent} to $\vol$, i.e.~$\meas$ is of the form $\rme^{-2\phi}\,\vol$ for some locally bounded Borel function $\phi\colon \mms\to\R$. It will always be mentioned explicitly if a result applies to such an $\meas$ (or certain quantities defined in terms of $\meas$). 

Given any $p\in [1,\infty]$, we particularly have distinguished (local) Lebesgue spaces $\Ell^p(\mms,\meas)$ and $\smash{\Ell^p_\loc(\mms,\meas)}$ w.r.t.~$\meas$. If $\meas = \vol$, these will be  abbreviated by $\Ell^p(\mms)$ and $\smash{\Ell^p_\loc(\mms)}$, respectively.

\subsection{Metric structure}\label{Sub:Metric structure}  $(\mms,\Rmet)$ comes with a natural length distance function $\smash{\met_\Rmet}$, abbreviated by $\met$ when the dependence on $\Rmet$ is clear, which turns $\smash{(\mms,\met_\Rmet)}$ into a metric space. Here we review the cornerstones of its construction from \cite{DCP,DCP2,dececco1991,norris1997}, to which we refer for proofs and technical details. See also \autoref{Sub:Heat kernel} below.

A map $\gamma\colon [0,1]\to\mms$ is \emph{Lipschitz} if for every $\alpha\in A$ and every $a,b\in [0,1]$ with $\gamma([a,b])\subset U_\alpha$, $\gamma_\alpha \colon [a,b]\to \Phi_\alpha(U_\alpha)$ is Lipschitz, where
\begin{align*}
\gamma_\alpha :=  \Phi_\alpha\circ\gamma.
\end{align*}
As usual, the length of  $\smash{\gamma_\alpha}$ is defined by
\begin{align}\label{Eq:Lngth}
L(\gamma_\alpha) := \int_a^b \sqrt{\Rmet_\alpha\big(\dot{\gamma}_\alpha(t), \dot{\gamma}_\alpha(t)\big)\circ\gamma_\alpha(t)}\d t. 
\end{align}
However, if $\smash{\gamma([a,b])\subset U_{\alpha'}}$ for some  $\alpha'\in A\setminus\{\alpha\}$, the quantities $L(\gamma_\alpha)$ and $L(\gamma_{\alpha'})$ do  not coincide in general. For instance, albeit the set of non-diffe\-rentiabi\-lity points of the transition function  $\Phi_{\alpha,\alpha'}$ is $\smash{\Leb^d}$-negligible, the set of times at which $\gamma$ passes through the latter might  be charged by $\Leb^1$ \cite[Sec.~2.5]{DCP}. 

To overcome this issue, given $x,y\in\mms$ and  $B\subset\mms$ with $\vol[B] = 0$, we set
\begin{align*}
\Lip_B(x,y;\mms) &:= \big\lbrace \gamma\colon [0,1] \to \mms : \gamma\text{ Lipschitz},\\ 
&\qquad\qquad \gamma(0)=x,\ \gamma(1)=y,\
 \gamma_\push\Leb^1[B]=0\big\rbrace,
\end{align*}
i.e.~the class of Lipschitz curves in $\mms$ from $x$ to $y$ which are \emph{transversal} to $B$. Then $\Lip_B(x,y;\mms) \neq \emptyset$ \cite[Lem.~2.4]{DCP}, and there exists a Borel set $B^*\subset\mms$ with $\vol[B^*]=0$ such that the \emph{length} $L(\gamma)$, constructed from \eqref{Eq:Lngth} in the evident way \cite[pp.~165--166]{DCP}, is well-defined for every $\gamma\in\Lip_{B^*}(x,y;\mms)$ \cite[p.~166]{DCP}. Therefore, for $x$, $y$ and $B$ as above the quantities
\begin{align*}
\met_B(x,y) &:= \inf\!\big\lbrace L(\gamma) : \gamma\in \Lip_{B\cup B^*}(x,y;\mms)\big\rbrace,\\
\met(x,y) &:= \sup\!\big\lbrace \met_B(x,y) : B\subset\mms,\ \vol[B]=0\big\rbrace
\end{align*}
are meaningful. In fact, $\met$ is a distance on $\mms$, independent of $\calU$, which induces the initial topology on $\mms$ \cite[Thm.~4.4, Thm.~4.5, Cor.~4.6]{DCP}. Moreover, it is a length distance \cite[Thm.~3.10]{DCP2}, and hence geodesic if $\mms$ is complete.  We call $\met$ the \emph{path distance} induced by $\Rmet$. 

If $(\mms,\Rmet)$ is a smooth Riemannian manifold, then $\met$ coincides with the length distance induced by the metric tensor $\Rmet$ \cite[Thm.~5.1]{DCP}.

We denote by $\smash{\Ball_r^\Rmet(x)}$, or simply $\Ball_r(x)$ if $\Rmet$ is understood, the open ball with radius $r>0$ w.r.t.~$\met$ around $x\in\mms$.

\begin{remark}\label{Re:Equivalent metric} Every Lipschitz atlas $\calU := (U_\alpha, \Phi_\alpha)_{\alpha\in A}$ defines a local distance on the chart $U_\alpha$, $\alpha\in A$, namely the Euclidean distance induced on the chart
\[
\sfD_\alpha(x, y) = \vert  \Phi_\alpha(x) - \Phi_\alpha(y) \vert,
\]
where $x,y\in U_\alpha$. Starting from $\sfD_\alpha$, it is possible to construct a global (well-defined, since $\mms$ is connected) distance $\sfD$ on $\mms$, as shown in \cite{dececco1991, luukkainen1977}. This distance depends on the fixed atlas $\calU$, but turns out to be locally equivalent to   $\met$ \cite[Thm.~6.2]{DCP}.
\end{remark}

\subsection{Lipschitz continuity} 

\subsubsection{Lipschitz functions}\label{Sub:Lip functions} 

A function $f\colon \mms \to \R$ is  \emph{locally  Lipschitz} if every point in $\mms$ has an open neighborhood $U\subset\mms$ such that $\smash{f\big\vert_{U}}$ is Lipschitz as a  map between the metric spaces $\smash{(U,\met\big\vert_{U^2})}$ and $\smash{(\R,\vert\cdot - \cdot\vert)}$ for every $\alpha\in A$. By \autoref{Re:Equivalent metric}, this is equivalent to the possibly more common definition \cite{dececco1991, norris1997} by requiring local Lipschitz continuity of $f_\alpha\colon \Phi_\alpha(U_\alpha) \to \R$ for every $\alpha\in A$, where
\begin{align*}
f_\alpha := f\circ\Phi_\alpha^{-1}.
\end{align*}
Furthermore, such an $f$ is \emph{Lipschitz} if it is Lipschitz as a map between the metric spaces $(\mms,\met)$ and $(\R,\vert\cdot - \cdot\vert)$. (In particular, unlike e.g.~\cite{norris1997} Lipschitz continuity is always intended \emph{globally}.) Denote the spaces of all (locally) Lipschitz functions $f\colon \mms\to\R$  by $\Lip_\loc(\mms)$ and $\Lip(\mms)$, respectively. In turn, the spaces $\smash{\Lip_\loc(\mms;\R^{\cald})}$ and $\smash{\Lip(\mms;\R^{\cald})}$, $\cald\in\N$, consist of all  functions $\smash{f\colon\mms\to\R^{\cald}}$ whose components are (locally) Lipschitz. 

Clearly, by \autoref{Sub:Metric structure}, every element in $\Lip_\loc(\mms)$ is continuous, and if $\mms$ is compact, then $\Lip_\loc(\mms) = \Lip(\mms)$. 

By Rademacher's theorem and the chain rule, for every $f\in\Lip_\loc(\mms)$ the family $\rmd f := (\rmd f_\alpha)_{\alpha\in A}$ is a $1$-form according to \autoref{Sub:Diff forms}, the  \emph{differential} of $f$. If $f\in\Lip(\mms)$, then $\vert\rmd f\vert$ belongs to $\Ell^\infty(\mms)$.

Evidently, the induced linear operator $\rmd$ on $\Lip_\loc(\mms)$ obeys all expected locality, chain and Leibniz rules (as usual, in appropriate a.e.~senses).

%Let $\smash{\Lip_\comp(\mms)}$ and $\smash{\Lip_\comp(\mms;\R^{\cald})}$ consist of all compactly supported elements in $\Lip(\mms)$ and $\smash{\Lip(\mms;\R^{\cald})}$, respectively.

%A function $f\colon \mms \to \R$ is termed \emph{locally  Lipschitz} if  $f_\alpha\colon \Phi_\alpha(U_\alpha)\to \R$ is locally Lipschitz for every $\alpha\in A$, where
%\begin{align*}
%f_\alpha := f\circ\Phi_\alpha^{-1}.
%\end{align*}
%Moreover, such an $f$ is \emph{Lipschitz} if $\sup_{\alpha\in A} \Lip\, f_\alpha < \infty$. (In particular, the latter Lipschitz continuity is always intended \emph{globally}.) Denote the spaces of all (locally) Lipschitz $f\colon \mms\to\R$  by $\Lip_\loc(\mms)$ and $\Lip(\mms)$, respectively. In turn, the spaces $\smash{\Lip_\loc(\mms;\R^{\mathcal{d}})}$ and $\smash{\Lip(\mms;\R^{\mathcal{d}})}$, $\mathcal{d}\in\N$, consist of all  functions $\smash{f\colon\mms\to\R^{\mathcal{d}}}$ whose components are (locally) Lipschitz. %Let $\smash{\Lip_\comp(\mms)}$ and $\smash{\Lip_\comp(\mms;\R^{\mathcal{d}})}$ consist of all compactly supported elements in $\Lip(\mms)$ and $\smash{\Lip(\mms;\R^{\mathcal{d}})}$, respectively.

%By Rademacher's theorem and the chain rule, for every $f\in\Lip_\loc(\mms)$ the family $\rmd f := (\rmd f_\alpha)_{\alpha\in A}$ is a $1$-form according to \autoref{Sub:Diff forms}, the  \emph{differential} of $f$. If $f\in\Lip(\mms)$, $\rmd f$ is essentially bounded. 

%Evidently, the induced linear operator $\rmd$ on $\Lip_\loc(\mms)$ obeys all expected locality, chain and Leibniz rules (as usual, in appropriate a.e.~senses).

\subsubsection{Lipschitz maps}\label{Sub:Lip maps} Analogously to \autoref{Sub:Lip Riem mflds} above, a map $F\colon \mms\to\mmms$ is evidently defined to be \emph{\textnormal{(}locally\textnormal{)} Lipschitz} in the metric sense. Such an $F$ is locally Lipschitz if and only if for every $\alpha\in A$ and every $\beta\in B$ with $F(U_\alpha)\cap V_\beta\neq \emptyset$, the map $F_{\alpha,\beta} \colon \Phi_{\alpha}(U_\alpha) \to \Psi_\beta(F(U_\alpha)\cap V_\beta)$ is locally Lipschitz as a map between subsets of $d$- and $\cald$-dimensional Euclidean spaces, where 
\begin{align*}
F_{\alpha,\beta}:= \Psi_\beta\circ F\circ \Phi_{\alpha}^{-1}.
\end{align*}
The spaces of all such $F$ are denoted by $\Lip_\loc(\mms;\mmms)$ and $\Lip(\mms;\mmms)$, respectively. For $\smash{\mmms = \R^{\cald}}$, the preceeding notions of (local) Lipschitz continuity are compatible with those of  \autoref{Sub:Lip functions}. %Define the spaces $\Lip_\comp(\mms)$ and $\Lip_\comp(\mms;\mmms)$ to consist of all compactly supported elements in $\Lip(\mms)$ and $\Lip(\mms;\mmms)$, respectively. 

%For every $F\in\Lip_\loc(\mms;\mmms)$ and every $\beta\in B$, the family $\smash{\rmd_\beta F := (\rmd F_{\alpha,\beta})_{\alpha\in A}}$ is a $\mathcal{d}$-tuple of $1$-forms on $\mms$. If $F\in\Lip(\mms;\mmms)$, $\rmd_\beta F$ is  componentwise essentially bounded, uniformly in $\beta\in B$.

Finally, a map $F\colon \mms\to \mmms$ is called  \emph{\textnormal{(}locally\textnormal{)} bi-Lipschitz} if $F\in \Lip_\loc(\mms;\mmms)$ or $F\in\Lip(\mms;\mmms)$,  and $F$ admits an inverse map belonging to $\Lip_\loc(\mmms;\mms)$ or $\Lip(\mmms;\mms)$, respectively.

\subsection{Quasi-isometry} 

In the following, $(\mms,\Rmet)$ and $(\mmms,\RRmet)$ will denote two LR manifolds of not necessarily equal dimensions $d$ and $\cald$, respectively, $d,\cald\in\N$, according to the notions introduced in the \autoref{Sub:Lip R  mflds}, cf.~\autoref{Def:Lip R mfld}.  Let $\calU := (U_\alpha,\Phi_\alpha)_{\alpha\in A}$ and $\calV := (V_\beta,\Psi_\beta)_{\beta\in B}$ denote their respective atlases.

\begin{definition}\label{Def:QI} We say that $(\mms,\Rmet)$ and $(\mmms,\RRmet)$ are \emph{quasi-isometric}, briefly $(\mms,\Rmet)\sim(\mmms,\RRmet)$ or $(\mms,\Rmet)\sim_F(\mmms,\RRmet)$, if there exists a bi-Lipschitz map $F\colon\mms\to\mmms$.
\end{definition}

Quasi-isometry of Lipschitz manifolds does not depend on the chosen atlases, %. Indeed, given any such $\mms$ and two atlases $\calU$ and $\calU'$ which make $\mms$ Lipschitz, the Lipschitz structure $(\mms,\calU)$ is bi-Lipschitz homeomorphic to $(\mms,\calU\cup\calU')$ by the identity map on $\mms$, cf.~\autoref{Sub:Lip maps}. Clearly, quasi-isometry 
is an equivalence relation on the totaliy of LR manifolds, and preserves  topological dimensions. 

By \autoref{Le:Rev char} below --- whose easy proof employing the notions from \autoref{Sub:Metric structure} is omitted --- quasi-isometry has a convenient counterpart in terms of  pull-back metrics, a concept which is shortly outlined now (and which could easily be defined  for locally bi-Lipschitz maps with  evident changes in the statements below).

The \emph{pull-back metric} of $\RRmet$ under a bi-Lipschitz  $F\colon \mms\to\mmms$ is the family $F^*\RRmet := (F^*\RRmet_\alpha)_{\alpha\in A}$ of scalar products $F^*\RRmet_\alpha\colon \Phi_\alpha(U_\alpha) \to \smash{\bigsqcup_{z\in\Phi_\alpha(U_\alpha)}(\R^d)^{*\otimes 2}}$ given by
\begin{align}\label{Eq:Pullback metric def}
F^*\RRmet_\alpha = F_{\alpha,\beta}^*\,\RRmet_\beta\quad\Leb^d\text{-a.e.}\quad\text{on }\Phi_\alpha(U_\alpha\cap F^{-1}(V_\beta))
\end{align}
for every $\alpha\in A$ and $\beta\in B$ with $F(U_\alpha)\cap V_\beta\neq \emptyset$. Indeed, \eqref{Eq:Compatibility condition} for $\RRmet$ and the chain rule ensure that $F^*\RRmet$ is well-defined in the above way.  However,  $F^*\RRmet$ itself does not necessarily satisfy   \eqref{Eq:Compatibility condition}.

%Let us summarize the following standard characterizations of quasi-isometry. Its proof essentially uses \eqref{Eq:Compatibility condition} for $\Rmet$ and $\RRmet$, the discussion from \autoref{Sub:Metric structure} and the fact that Lipschitz maps preserve null sets, and is omitted.

\begin{lemma}\label{Le:Rev char} The following properties hold.
\begin{enumerate}[label=\textnormal{\textcolor{black}{(}\roman*\textcolor{black}{)}}]
\item Suppose that $(\mms,\Rmet)\sim_F(\mmms,\RRmet)$ with some bi-Lipschitz $F\colon \mms\to\mmms$. Then there exists a constant $C\geq 1$ such that for every $\alpha\in A$,
\begin{align}\label{Eq:CF prop}
C^{-1}\,\Rmet_\alpha \leq F^*\RRmet_\alpha \leq C\,\Rmet_\alpha\quad\Leb^d\text{-a.e.}\quad\text{on }\Phi_\alpha(U_\alpha)
\end{align}
in the sense of quadratic forms.
\item Conversely, if there exists a locally bi-Lipschitz map $F\colon \mms\to\mmms$, a constant $C \geq 1$ and a sub-atlas $\smash{\calU' := (U_{\alpha'},\Phi_{\alpha'})_{\alpha'\in A'}}$ of $\calU$ such that  \eqref{Eq:CF prop} holds for every $\alpha'\in A'$ instead of $\alpha$, then $(\mms,\Rmet)\sim_F(\mmms,\RRmet)$.
%\item We have $\mms\sim\mmms$ if and only if $\smash{(\mms,\met_\Rmet)}$ and $\smash{(\mmms,\met_\RRmet)}$ are isometric as metric spaces, i.e.~there exist a bijective map $F\colon \mms\to\mmms$ and a constant $C \geq  1$ such that for every $x,y\in\mms$,
%\begin{align*}
%C^{-1}\,\met_\Rmet(x,y) \leq \met_\RRmet(F(x), F(y))\leq C\,\met_\Rmet(x,y).
%\end{align*}
\end{enumerate}
\end{lemma}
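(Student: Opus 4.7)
The plan splits into two directions, both relying on the length structure of an LR manifold to bridge metric bi-Lipschitzness of $F$ with pointwise comparability of the pull-back metric.

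For (i), the idea is to turn the global bi-Lipschitz bound $K^{-1}\met_\Rmet \leq \met_\RRmet\circ(F\times F)\leq K\,\met_\Rmet$ into a pointwise inequality by an infinitesimal argument. I would first fix $\alpha\in A$ and $\beta\in B$ with $F(U_\alpha)\cap V_\beta\neq\emptyset$, and pick $z_0\in\Phi_\alpha(U_\alpha\cap F^{-1}(V_\beta))$ at which $F_{\alpha,\beta}$ is differentiable (Rademacher) and at which $\Rmet_\alpha$ and $F^*\RRmet_\alpha$ are Lebesgue continuity points; almost every $z_0$ qualifies. Since $(\mms,\met_\Rmet)$ and $(\mmms,\met_\RRmet)$ are length spaces (\autoref{Sub:Metric structure}) and $F$ is $K$-Lipschitz between them, any Lipschitz curve $\gamma\colon [0,1]\to\mms$ satisfies $L(F\circ\gamma)\leq K\,L(\gamma)$, since in a length space the length of a curve equals the supremum over partitions of sums of distances. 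Applying this to the short segments $s\mapsto \Phi_\alpha^{-1}(z_0 + s\xi)$ for $\xi\in\R^d$, dividing by $s$, and letting $s\to 0$ would yield $F^*\RRmet_\alpha(\xi,\xi)(z_0)\leq K^2\,\Rmet_\alpha(\xi,\xi)(z_0)$; the reverse inequality follows by applying the same argument to $F^{-1}$. The resulting constant $C := K^2$ is uniform in $\alpha$ precisely because the global bi-Lipschitz constant $K$ is, whereas the chart-by-chart constants from \autoref{rmk:lambda} would not suffice.

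For (ii), the strategy is the reverse route: upgrade the pointwise bounds on $\calU'$ to a global metric bi-Lipschitz statement. For any Lipschitz curve $\gamma\colon [0,1]\to\mms$ whose image lies in a single chart $U_{\alpha'}$ of $\calU'$, the hypothesis $C^{-1}\Rmet_{\alpha'}\leq F^*\RRmet_{\alpha'}\leq C\,\Rmet_{\alpha'}$ combined with the length formula~\eqref{Eq:Lngth} and the definition~\eqref{Eq:Pullback metric def} yields $C^{-1/2}L(\gamma)\leq L(F\circ\gamma)\leq C^{1/2}L(\gamma)$. For a general Lipschitz curve, I would decompose $\gamma$ into finitely many sub-arcs, each contained in a chart of $\calU'$, using the locally finite cover property, and sum the piecewise inequalities. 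Since the pointwise pull-back bound implies that both $F$ and $F^{-1}$ (which exists and is locally Lipschitz by hypothesis, cf.~\autoref{Sub:Lip maps}) preserve volume-null sets, the transversality condition from \autoref{Sub:Metric structure} can be arranged simultaneously for $\met_\Rmet$ and $\met_\RRmet\circ(F\times F)$. Taking the infimum over admissible curves and the supremum over null sets then delivers $C^{-1/2}\met_\Rmet(x,y)\leq \met_\RRmet(F(x),F(y))\leq C^{1/2}\met_\Rmet(x,y)$ for all $x,y\in\mms$, establishing that $F$ is bi-Lipschitz and hence $(\mms,\Rmet)\sim_F(\mmms,\RRmet)$.

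The main obstacle I anticipate in both directions is the careful handling of the null sets entering the definition of $\met_\Rmet$ and $\met_\RRmet$ from \autoref{Sub:Metric structure}. In (i) the infinitesimal segments must avoid the exceptional sets for the Lebesgue points of $\Rmet_\alpha$, $F^*\RRmet_\alpha$, and the differentiability set of $F_{\alpha,\beta}$; this is handled by a Fubini argument supplying such segments through a.e.~base point in a.e.~direction. In (ii) analogous care is needed so that the chart-wise length comparison transfers through the supremum-over-null-sets definition of $\met$. Once these measure-theoretic technicalities are in place, both implications reduce to the length-equals-integral identity~\eqref{Eq:Lngth} applied to suitable test curves.
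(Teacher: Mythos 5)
The paper does not actually give a proof of this lemma; it only says the ``easy proof employing the notions from \autoref{Sub:Metric structure} is omitted,'' so there is no official argument to compare against. Your proposal takes exactly the route that hint points to, and the skeleton is correct: for (i), infinitesimalize the global bi-Lipschitz bound along short chart segments and observe that the global constant $K$ yields a pointwise constant $C = K^2$ uniform across all $\alpha\in A$ (which is the real content of (i), since the chart-by-chart constants $\lambda_\alpha$ from \autoref{rmk:lambda} would not give this); for (ii), integrate the pointwise tensor bound along chart-decomposable curves and take the $\inf$/$\sup$ in the definition of $\met$, propagating transversality through $F$ and $F^{-1}$.

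The one step carrying the technical weight of (i), which you flag but leave at the level of ``measure-theoretic technicalities,'' is the limit $L(\gamma_s)/s \to \sqrt{\Rmet_\alpha(\xi,\xi)(z_0)}$ for the metric length of a short chart segment at a.e.\ $(z_0,\xi)$. This bundles two nontrivial facts: first, that for a transversal segment the metric length (the supremum over partitions of sums of $\met$-distances) coincides with the chart-wise integral from \eqref{Eq:Lngth} --- this is part of the De Cecco--Palmieri identification of $\met$ as a length distance and does not drop out of the $\sup_B\inf_\gamma$ definition for free; second, a one-dimensional Lebesgue/Fubini selection of admissible $(z_0,\xi)$, with the quadratic-form inequality then extended to all $\xi$ by density. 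Your instinct to argue via lengths rather than via the first-order expansion of the raw \emph{distance} $\met(z_0,z_0+s\xi)$ is the right one: the latter expansion can fail for $z_0$ lying on a null set where a nearby cheap metric region produces a strictly smaller limit, whereas the length-based statement survives for generic segments. Making those two ingredients explicit would complete (i); the rest of what you describe (decomposition of curves into $\calU'$-chart sub-arcs, $F$ and $F^{-1}$ sending $\vol$-null sets to $\vol$-null sets so that the exceptional sets $B^*$ can be merged across the two manifolds) is routine and correctly organized.
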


\begin{remark}\label{Re:Volume comparison}
By \autoref{Le:Rev char} and \eqref{Eq:Volg}, it follows in particular that if $(\mms,\Rmet)\sim_F(\mmms,\RRmet)$, there exists a constant $C \geq 1$ such that for every $x\in\mms$ and every $r>0$,
\begin{align*}
C^{-1}\,\vol_\Rmet\big[\Ball_{r/C}^\Rmet(x)\big] \leq \vol_\RRmet\big[\Ball_r^\RRmet(F(x))\big] \leq C\,\vol_\Rmet\big[\Ball_{Cr}^\Rmet(x)\big].
\end{align*}
\end{remark}

According to \autoref{Le:Rev char}, the setting of a smooth Riemannian manifold with two metrics that are uniformly elliptic to each other,  studied in \cite{saloffcoste1992}, falls into the category of quasi-isometry according to \autoref{Def:QI}.

\subsection{Some potential theory}\label{Sub:Some potential theory} Finally, we outline how Dirichlet spaces over  an LR manifold $(\mms,\Rmet)$  can be constructed. We refer the reader to \cite{davies1989,norris1997,sturm1995, sturm1996} for details, and to \cite{bouleau1991,chen2012} for general Dirichlet form theory.

To relax notation, in the sequel we do not make explicit the dependencies of all introduced objects on $\Rmet$, unless required.

Every statement in this section holds with evident modifications for $\vol$ replaced by a measure  $\meas$ as in \autoref{Sub:Volume}, except \autoref{Le:Quasi-iso Dirichlet}, as underlined in \autoref{Re:Quasi-iso Dirichlet}. 

\subsubsection{Weak derivatives}\label{Sub:Weak}  Let $W^{1,2}(\mms)$ be the space of all $f\in\Ell^2(\mms)$ whose chartwise distributional differentials $\rmd_\alpha f$, $\alpha\in A$, are $1$-forms, locally $\Ell^2$ w.r.t.~$\smash{\Leb^d}$ on $\Phi_\alpha(U_\alpha)$, which in turn give rise to a $1$-form $\rmd f :=(\rmd f_\alpha)_{\alpha\in A}$ on $\mms$ such that $\vert\rmd f\vert\in\Ell^2(\mms)$. The space $W^{1,2}(\mms)$ is complete and separable w.r.t.~the norm $\Vert\cdot\Vert_{W^{1,2}(\mms)}$ given by
\begin{align*}
\big\Vert f\big\Vert_{W^{1,2}(\mms)}^2  := \big\Vert f\big\Vert_{\Ell^2(\mms)}^2 + \big\Vert \vert \rmd f\vert\big\Vert_{\Ell^2(\mms)}^2,
\end{align*}
and $\rmd$, the so-called \emph{differential}, is a closed operator on $\Ell^2(\mms)$.  

The latter object coincides with the one introduced in \autoref{Sub:Lip functions}  on the space  $W^{1,2}(\mms)\cap\Lip_\loc(\mms)$. In particular, $W^{1,2}(\mms) \cap \Lip_\loc(\mms)$ is dense in $W^{1,2}(\mms)$, and so is  $W^{1,2}(\mms)\cap\Lip(\mms)$ by partition of unity.

\subsubsection{Dirichlet form}\label{Sub:Dir form}

The quadratic form $\Ch\colon W^{1,2}(\mms)\to [0,\infty)$ defined by
\begin{align}\label{Eq:E definition}
\Ch(f) := \frac{1}{2}\int_\mms \vert\rmd f\vert^2\d\vol
\end{align}
is a strongly local, regular Dirichlet form with domain $\F:= W^{1,2}(\mms)$ and carré du champ $\vert\rmd\cdot\vert^2\colon W^{1,2}(\mms)\to\Ell^1(\mms)$. (The latter is the same quantity for every choice of $\vol$-locally equivalent reference measure $\meas$ according to \autoref{Sub:Volume}.) Both $W^{1,2}(\mms)\cap\Lip_\loc(\mms)$ and $W^{1,2}(\mms)\cap\Lip(\mms)$ are cores for $\Ch$. 

We use the non-relabeled symbol $\Ch$ for the polarization $\smash{\Ch\colon W^{1,2}(\mms)^2\to \R}$ of \eqref{Eq:E definition}, and we do so analogously for any other quadratic form in the sequel.

The following lemma will be useful in \autoref{Sub:Quasi isom} below. It easily follows from the definition \eqref{Eq:Pullback metric def} of the pullback metric,  \autoref{Le:Rev char} and the above mentioned density of Lipschitz functions in energy.

\begin{lemma}\label{Le:Quasi-iso Dirichlet} Suppose that $(\mms,\Rmet)\sim_F(\mmms,\RRmet)$ through a bi-Lipschitz map $F\colon \mms\to\mmms$. Then   $f\in \F_{\Rmet}$ if and only if $f\circ F^{-1}\in \F_{\RRmet}$, and there exists a constant $C\geq 1$ such that for every such $f$,
\begin{align*}
C^{-1}\,\Ch_{\Rmet}(f) \leq \Ch_{\RRmet}(f\circ F^{-1})\leq C\,\Ch_{\Rmet}(f).
\end{align*}
\end{lemma}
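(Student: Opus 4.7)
The plan is to verify the claim first on the dense subspace of Lipschitz functions and then extend by closedness. Concretely, I first fix $f\in W^{1,2}(\mms)\cap\Lip(\mms)$ and set $g:=f\circ F^{-1}$. Since $F$ is bi-Lipschitz, the composition $g$ lies in $\Lip(\mmms)$, so $\rmd g$ makes sense chartwise as in \autoref{Sub:Lip functions}. The key identity, to be verified on every pair of charts $(U_\alpha,\Phi_\alpha)$ of $\mms$ and $(V_\beta,\Psi_\beta)$ of $\mmms$ with $F(U_\alpha)\cap V_\beta \neq \emptyset$, is
\[
\big\vert \rmd g\big\vert_{\RRmet}^2\circ F = \big\vert \rmd f\big\vert_{F^*\RRmet}^2 \quad \Leb^d\text{-a.e.~on } \Phi_\alpha(U_\alpha\cap F^{-1}(V_\beta)),
\]
which follows by applying the ordinary Euclidean chain rule to $g_\beta = f_\alpha\circ F_{\alpha,\beta}^{-1}$ and invoking the very definition \eqref{Eq:Pullback metric def} of $F^*\RRmet$.

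Next, I would invoke \autoref{Le:Rev char} to control the pull-back metric by the original one: there exists $C\geq 1$, independent of $\alpha$, such that $C^{-1}\,\Rmet_\alpha\leq F^*\RRmet_\alpha\leq C\,\Rmet_\alpha$ a.e.~on every chart. Combined with the above identity this yields the pointwise comparison
\[
C^{-1}\,\big\vert\rmd f\big\vert_\Rmet^2 \leq \big\vert\rmd g\big\vert_\RRmet^2\circ F \leq C\,\big\vert\rmd f\big\vert_\Rmet^2 \quad \vol_\Rmet\text{-a.e.~on }\mms.
\]
Integrating this inequality against $\vol_\RRmet$ on $\mmms$ after a change of variables, and using the volume comparison from \autoref{Re:Volume comparison} (which, on any chart, reduces to the usual Jacobian comparison $C^{-1}\sqrt{\det\Rmet_\alpha}\leq \sqrt{\det F^*\RRmet_\alpha}\leq C\sqrt{\det\Rmet_\alpha}$ coming from \autoref{Le:Rev char}), I conclude
\[
C^{-1}\,\Ch_\Rmet(f) \leq \Ch_\RRmet(g) \leq C\,\Ch_\Rmet(f),
\]
with a possibly enlarged but still universal constant $C\geq 1$. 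Together with the analogous $\Ell^2$-comparison $C^{-1}\,\Vert f\Vert_{\Ell^2(\mms)}^2 \leq \Vert g\Vert_{\Ell^2(\mmms)}^2 \leq C\,\Vert f\Vert_{\Ell^2(\mms)}^2$ (obtained by the same change of variables without the gradient terms), this shows that the pull-back $f\mapsto f\circ F^{-1}$ is a bounded linear bijection from $W^{1,2}(\mms)\cap\Lip(\mms)$ onto $W^{1,2}(\mmms)\cap\Lip(\mmms)$ with a bounded inverse $F^{-1}$.

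Finally, I would extend to arbitrary $f\in \F_\Rmet = W^{1,2}(\mms)$ by density. Since $W^{1,2}(\mms)\cap\Lip(\mms)$ is a core for $\Ch_\Rmet$ (and analogously on $\mmms$), any $f\in\F_\Rmet$ can be approximated in $\Vert\cdot\Vert_{W^{1,2}(\mms)}$ by Lipschitz functions $f_n$; the estimates just derived show that $(f_n\circ F^{-1})_n$ is Cauchy in $W^{1,2}(\mmms)$ with limit $f\circ F^{-1}$, which therefore lies in $\F_\RRmet$ and satisfies the two-sided inequality by passing to the limit. The converse direction follows by symmetry, since $F^{-1}$ is itself bi-Lipschitz. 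The only mildly delicate point is making sure the constants appearing from \autoref{Le:Rev char} and from the volume/Jacobian comparison combine into one universal $C$ independent of the chart, but this is immediate because \autoref{Le:Rev char} provides a global constant.
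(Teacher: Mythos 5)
Your proposal is correct and follows exactly the route the paper indicates for this lemma: chain rule and the definition \eqref{Eq:Pullback metric def} of the pull-back metric to compare carr\'e du champs chartwise, \autoref{Le:Rev char} for the uniform two-sided bound on $F^*\RRmet$, the Jacobian/volume comparison implicit in \autoref{Re:Volume comparison}, and finally density of $W^{1,2}\cap\Lip$ in energy to extend from Lipschitz functions to all of $\F_\Rmet$. Aside from the minor slip of calling the inverse operator ``$F^{-1}$'' rather than $h\mapsto h\circ F$, the argument is sound.
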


\begin{remark}\label{Re:Quasi-iso Dirichlet} If one starts with Dirichlet forms $\smash{(\Ch_{\Rmet,\meas},\F_{\Rmet,\meas})}$ on $\mms$ and $\smash{(\Ch_{\RRmet,\mmeas},\F_{\RRmet,\mmeas})}$ on $\mmms$ induced in the evident way as above by  two Borel measures $\meas$ on $\mms$ and $\mmeas$ on $\mmms$ which are locally equivalent to the respective volume measures, to obtain the conclusion of \autoref{Le:Quasi-iso Dirichlet} one has to additionally assume that  $\meas$  and $(F^{-1})_\push\mmeas$ are mutually equivalent with bounded densities.
\end{remark}

\subsubsection{Laplacian and heat flow} Let $\Delta/2$ be the closed, self-adjoint and nonpositive generator with dense  domain $\Dom(\Delta)\subset \Ell^2(\mms)$ generating $\Ch$. %The operator $\Delta$ is called \emph{\textnormal{(}Neumann\textnormal{)} Laplacian}.

By the spectral theorem, given any $f\in\Ell^2(\mms)$ the assignment
\begin{align*}
\ChHeat_tf := \rme^{\Delta t/2}f
\end{align*}
gives rise to the unique global solution $u(t) := \ChHeat_tf$ of the heat equation --- or, in other words, of the parabolic operator $\partial/\partial t - \Delta/2$, cf.~\cite[p.~285]{sturm1995} for more precise definitions --- on $\mms$ with initial condition $f$. The \emph{heat flow} $(\ChHeat_t)_{t\geq 0}$ constitutes a strongly continuous semigroup of linear, positivity-preserving and sub-Markovian contraction operators on $\Ell^2(\mms)$. $(\ChHeat_t)_{t\geq 0}$ extends to a semigroup of linear contraction operators on $\Ell^p(\mms)$, $p\in [1,\infty]$,  which  is  strongly continuous if $p<\infty$ and weakly$^*$ continuous provided $p=\infty$.

\subsubsection{Heat kernel}\label{Sub:Heat kernel} The heat operator $\ChHeat_t$, $t\geq 0$, can be represented as an integral operator w.r.t.~$\vol$ on $\Ell^p(\mms)$, $p\in[1,\infty]$, through a (spacially symmetric) fundamental solution $\sfp\colon (0,\infty)\times \mms^2\to (0,\infty)$, the  \emph{heat kernel}, of the parabolic operator $\partial/\partial t - \Delta/2$. Indeed, $\vol$ is locally doubling and the Dirichlet space $(\mms,\Ch,\vol)$ obeys a local Sobolev inequality, which are both Euclidean properties that transfer chartwise back to $\mms$ \cite{saloffcoste2002,sturm1995,sturm1996}, compare with  \autoref{Sub:Quasi isom} below. Moreover \cite[Thm.~3.6]{norris1997}, the metric $\met$ introduced in  \autoref{Sub:Metric structure} coincides with the one generated by the \emph{intrinsic distance} induced by $\vert\rmd\cdot\vert^2$, i.e.~for every $x,y\in\mms$,
\begin{align*}
\met(x,y) = \sup\!\big\lbrace \phi(x) - \phi(y) : \phi\in\Lip_\loc(\mms),\ \vert\rmd \phi\vert\leq 1\ \vol\text{-a.e.}\big\rbrace.
\end{align*}
The general result \cite[Prop.~2.3]{sturm1995} thus applies and leads to the above heat kernel existence. (However, the intrinsic distance does generally \emph{not} determine $\Rmet$, hence $\Ch$, unless the coefficients of $\Rmet$ are a priori known to be continuous \cite{sturm1997}.)

The map $\sfp$ may and will be chosen to be (jointly) locally Hölder \cite{sturm1996}. %For every $x\in\mms$ and every $\sigma,\tau > 0$ with $\sigma<\tau$ the function $u\in\F((\sigma,\tau)\times\mms)$ given by
%\begin{align*}
%u(t) := \sfp(t,x,\cdot)
%\end{align*}
%is a global solution to the heat equation on $(\sigma,\tau)\times\mms$ as in \autoref{Def:NM}. 
Denoting by $\lambda\geq 0$ the least eigenvalue of $-\Delta/2$, we have the following Gaussian heat kernel estimate \cite[Thm.~2.4]{sturm1995} (whose polynomial terms are absorbed by the exponentials up to the errors $\varepsilon,\delta > 0$ according to the remark at \cite[p.~309]{sturm1995}).

\begin{proposition}\label{Pr:HK bound general} Given any $x,y\in\mms$, let $R_x,R_y>0$ such that the doubling property \cite[p.~293]{sturm1995} and the  Sobolev inequality \cite[p.~294]{sturm1995} hold on $\smash{\Ball_{R_x}(x)}$ and $\smash{\Ball_{R_y}(y)}$, respectively. Define $R := \min\{R_x,R_y\}$. Then for every $\varepsilon,\delta >0$ there exists a constant $C>0$ such that for every $t\in (0,R^2]$,
\begin{align*}
\sfp(t,x,y) \leq C\,\vol\big[\Ball_{\sqrt{t}}(x)\big]^{-1/2}\,\vol\big[\Ball_{\sqrt{t}}(y)\big]^{-1/2}\,\exp\!\Big[\!-\!\frac{\met^2(x,y)}{(4+\varepsilon)t} - (1-\delta)\lambda t\Big].
\end{align*}
\end{proposition}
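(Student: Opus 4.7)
My plan is to follow the Davies--Grigoryan scheme for Gaussian heat kernel upper bounds, which divides the argument into an on-diagonal estimate, a Davies twisting step producing the Gaussian factor, and a spectral-gap extraction yielding the $\rme^{-(1-\delta)\lambda t}$ decay.

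First I would convert the local doubling property on $\Ball_{R_x}(x)$ and the local Sobolev inequality on $\Ball_{R_x}(x)$ into an on-diagonal bound of the form $\sfp(t,x,x)\leq C/\vol[\Ball_{\sqrt{t}}(x)]$ for $t\in(0,R_x^2]$, and analogously at $y$. This is classical: the local Sobolev inequality implies (by Hölder interpolation against $\Vert\cdot\Vert_{\Ell^1}$, using doubling to relate volumes at different scales) a Nash-type inequality localized in the ball, from which one runs the standard Nash argument for the locally-defined heat semigroup with Dirichlet conditions on the boundary of the ball. A subsolution comparison argument (maximum principle applied to the difference between the global heat kernel and its localized version, with finite-propagation-speed controlled by the Davies factor developed in the next step) then transfers the bound to the global $\sfp$. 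The off-diagonal product structure $\vol[\Ball_{\sqrt{t}}(x)]^{-1/2}\vol[\Ball_{\sqrt{t}}(y)]^{-1/2}$ drops out by Cauchy--Schwarz and symmetry:
\begin{align*}
\sfp(t,x,y)=\int_\mms \sfp(t/2,x,z)\,\sfp(t/2,z,y)\d\vol(z)\leq\sqrt{\sfp(t,x,x)\,\sfp(t,y,y)},
\end{align*}
after which local doubling allows passage from scale $\sqrt{t/2}$ to $\sqrt{t}$ at the cost of a constant.

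Next, to insert the Gaussian factor, I would use Davies' perturbation method. For $\psi\in\Lip_\loc(\mms)$ with $\vert\rmd\psi\vert\leq 1$ $\vol$-a.e., consider the twisted form $\Ch^\psi(f):=\Ch(\rme^{-\alpha\psi}f,\rme^{\alpha\psi}f)$ for $\alpha\in\R$; chain-rule computations show that $\Ch^\psi\geq \Ch - (\alpha^2/2)\Vert f\Vert_{\Ell^2(\mms)}^2$, so the twisted semigroup $\ChHeat_t^{\alpha\psi}:=\rme^{-\alpha\psi}\ChHeat_t\rme^{\alpha\psi}$ obeys the same on-diagonal bound up to a factor $\rme^{\alpha^2 t/2}$. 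Specializing $\psi=\met(\cdot,y)$ (admissible thanks to the identification of $\met$ with the intrinsic distance recalled just before \autoref{Pr:HK bound general}) and optimizing $\alpha=\met(x,y)/(2t)$ produces the canonical Gaussian factor $\rme^{-\met^2(x,y)/(4t)}$. The $\varepsilon$-loss in the denominator absorbs the polynomial prefactors inherent in the Nash step and any cross terms from the product on-diagonal estimate, in a manner now standard.

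Finally, to inject the $(1-\delta)\lambda t$ factor I would invoke the spectral bound $\Vert \ChHeat_t\Vert_{\Ell^2(\mms)\to\Ell^2(\mms)}\leq \rme^{-\lambda t}$ and interpolate with the just-obtained sub-Gaussian $\Ell^1\to\Ell^\infty$ bound via the factorization $\ChHeat_{3t}=\ChHeat_t\,\ChHeat_t\,\ChHeat_t$: apply the sub-Gaussian bound on the outer two factors and the $\Ell^2$-spectral bound on the middle one, then relabel $t$. This yields the displayed inequality with a slight redistribution of exponents, which after absorbing constants into the $\varepsilon$ and $\delta$ perturbations is precisely the claim. The main technical obstacle is ensuring that the Davies twisting at the level of the Dirichlet form $\Ch$ and the distance $\met$ of \autoref{Sub:Metric structure} is rigorous at this level of regularity --- this uses precisely the characterization of $\met$ as intrinsic distance recalled above, together with the density of $W^{1,2}(\mms)\cap\Lip_\loc(\mms)$ in $W^{1,2}(\mms)$ from \autoref{Sub:Weak} to justify cut-off and approximation arguments within the chain-rule manipulations.
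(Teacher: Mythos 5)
The paper's own proof of \autoref{Pr:HK bound general} is a citation: as stated in the sentence preceding the proposition, this is \cite[Thm.~2.4]{sturm1995}, with the polynomial prefactors absorbed into the exponentials according to the remark on \cite[p.~309]{sturm1995}. Your proposal therefore attempts something more ambitious, namely a self-contained argument, and it follows a route that is genuinely different from Sturm's. Sturm does not use the Nash iteration scheme you describe; instead he runs parabolic Moser iteration on local subsolutions to obtain an $\Ell^2$-mean value inequality (his Thm.~2.1), which bounds the value of $\sfp(t,x,\cdot)$ at a point directly in terms of its $\Ell^2$-average over a parabolic cylinder, and then combines this with Davies' weight method and Grigor'yan's integrated maximum principle to produce the Gaussian factor. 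The crucial structural point is that the mean value inequality bounds the \emph{global} heat kernel pointwise without ever passing through a Dirichlet heat kernel on a subdomain.

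This is precisely where your argument has a gap. You propose to obtain the on-diagonal bound via Nash's method for the Dirichlet heat kernel $\sfp^\Omega$ on a ball $\Omega=\Ball_{R_x}(x)$ where the local Sobolev inequality holds, and then transfer it to the global $\sfp$ by a ``subsolution comparison argument.'' But the comparison principle gives $\sfp^\Omega(t,x,y)\leq \sfp(t,x,y)$, which is the \emph{wrong} direction: you need an upper bound on $\sfp$, not a lower one. To control $\sfp-\sfp^\Omega$ requires a priori quantitative Gaussian decay of the heat kernel away from $\Omega$ (this is essentially Grigor'yan's localization lemma), which is exactly what you are trying to prove; your parenthetical appeal to ``finite-propagation-speed controlled by the Davies factor developed in the next step'' makes the argument circular, since the Davies twisting step itself needs the on-diagonal bound for the global semigroup as input. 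The remaining ingredients of your sketch — the Cauchy--Schwarz reduction of off-diagonal to on-diagonal, the Davies perturbation with $\psi=\met(\cdot,y)$ justified through the intrinsic-distance characterization of $\met$ recalled before the proposition, and the semigroup factorization $\ChHeat_{3t}=\ChHeat_t\ChHeat_t\ChHeat_t$ for the $\rme^{-(1-\delta)\lambda t}$ decay — are all correct and standard. The fix would be either to follow Sturm's Moser-iteration route (which the paper cites, and which bypasses the localization issue altogether), or to replace the vague subsolution comparison by a genuine implementation of Grigor'yan's bootstrap: first establish a rough Gaussian bound on balls by combining the local Dirichlet estimate with an integrated maximum principle, and only then run the Davies optimization.
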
 

Of course, in general the task is to find conditions under which the constant $C$ in \eqref{Eq:HK bd} is spacially, hence time uniform. See \cite{coulhon2008, davies1989, grigoryan1997, grigoryan2009} as well as the survey article \cite{saloffcoste2010} and the references therein for the vast literature about conditions for the former. For instance, in our work, see \autoref{Sub:Quasi isom}, this will be the case as soon as a local Bishop--Gromov-type property (stronger than the doubling property) and the Sobolev inequality hold with uniform constants.

\section{Heat kernel control pairs}\label{Ch:Control pairs}

In this chapter, we introduce the concept of \emph{heat kernel control pairs}, i.e.~suitable pairs of functions giving rise to heat kernel upper bounds with decoupled space and time dependencies. In \autoref{Sub:Gen ex}, we prove their existence in the entire generality of LR manifolds. To this aim, we first collect some preliminaries about local solutions to the heat equation on $\mms$, and we prove useful mean value inequalities for these. 

\subsection{Mean value inequalities} Every statement in this section, with evident modifications, still makes sense if we replace $\vol$ by a measure $\meas$ as in \autoref{Sub:Volume}.

\subsubsection{Local solutions}\label{Sub:Heat flow} We refer to \cite{sturm1995} and the references therein for details about the topics that are described in this subsection.

Let $I\subset\R$ be an open interval. %At various occasions, a function $u\colon I\to \calX$, where $\calX$ is a function space over $\mms$, considered in the sequel will be interpreted as two-variable function $u(t,x) := u(t)(x)$, $t\in I$ and $x\in\mms$. 
Let $\smash{\Cont_\bounded(\overline{I};\Ell^2(\mms))}$ be the Banach space of all bounded, continuous functions $u\colon \overline{I}\to\Ell^2(\mms)$ with the evident norm. 

The space $\F(I\times\mms)$ is defined to consist of all strongly measurable $u\colon I \to \F$ such that $\smash{t\mapsto \Vert u(t)\Vert_\F}$ belongs to $\smash{\Ell^2(I,\Leb^1)}$, and the distributional time derivative $\dot{u}$ is a strongly measurable map $t\mapsto\dot{u}(t)$ from $I$ to $\F^{-1}$ for which $t\mapsto\Vert \dot{u}(t)\Vert_{\F^{-1}}$ belongs to $\smash{\Ell^2(I,\Leb^1)}$. Here $\F^{-1}$ is the dual space of $\F$. $\F(I\times\mms)$ is a Hilbert space w.r.t.~the evident norm.

An important fact we frequently use is the following, cf.~\cite[p.~285]{sturm1995}.

\begin{lemma}\label{Le:Cts ext}% $\F(I\times\mms)$ embeds  into $\Cont_\bounded(\overline{I};\Ell^2(\mms,\meas))$.
	Every $u\in \F(I\times\mms)$ has a continuous $\smash{\Leb^1}$-version on $I$ with values in $\Ell^2(\mms)$ and which, w.r.t.~the uniform topology, continuously extends to $\smash{\overline{I}}$.
\end{lemma}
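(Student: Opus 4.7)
The statement is a classical fact about the Gelfand-triple-type space $\F(I\times\mms)$, whose analogues in the setting of evolution equations (see, e.g., Lions--Magenes or Evans's PDE book, \S 5.9.2) inspire the following strategy. The underlying structure is the dense continuous embedding $\F \hookrightarrow \Ell^2(\mms) \hookrightarrow \F^{-1}$, together with the identification of $\Ell^2(\mms)$ with its own dual in the middle.

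The plan is to produce the continuous representative via time-mollification. First, I would extend an arbitrary $u\in\F(I\times\mms)$ to a slightly larger open interval $\smash{\tilde I \supset \overline{I}}$ so as not to lose information at the endpoints; concretely, one can reflect $u$ at the endpoints of $I$ (if $I$ is bounded) or extend it by an even cut-off, and verify that this preserves membership in the analogous space $\F(\tilde I\times \mms)$. Then, convolving $u$ in the time variable with a standard mollifier $\rho_\varepsilon$ produces $u_\varepsilon\in \Cont^\infty(I;\F)$, with $u_\varepsilon \to u$ in $\Ell^2(I,\Leb^1;\F)$ and $\dot u_\varepsilon\to \dot u$ in $\Ell^2(I,\Leb^1;\F^{-1})$ as $\varepsilon\to 0$.

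For such smooth approximants, the chain rule gives, for every $s,t\in\overline{I}$,
\begin{align*}
\big\Vert u_\varepsilon(t) - u_{\varepsilon'}(t)\big\Vert_{\Ell^2(\mms)}^2 - \big\Vert u_\varepsilon(s) - u_{\varepsilon'}(s)\big\Vert_{\Ell^2(\mms)}^2 = 2\int_s^t \big\langle \dot u_\varepsilon(r) - \dot u_{\varepsilon'}(r),\, u_\varepsilon(r) - u_{\varepsilon'}(r)\big\rangle_{\F^{-1},\F}\d r.
\end{align*}
Averaging this identity in $s$ over a set of positive measure in $I$ and using Cauchy--Schwarz on the r.h.s., one deduces
\begin{align*}
\sup_{t\in\overline I}\big\Vert u_\varepsilon(t) - u_{\varepsilon'}(t)\big\Vert_{\Ell^2(\mms)}^2 \leq C\,\big\Vert u_\varepsilon - u_{\varepsilon'}\big\Vert_{\Ell^2(I,\Leb^1;\F)}\,\big\Vert \dot u_\varepsilon - \dot u_{\varepsilon'}\big\Vert_{\Ell^2(I,\Leb^1;\F^{-1})} + \text{(l.o.t.)},
\end{align*}
so that $(u_\varepsilon)_{\varepsilon>0}$ is Cauchy in $\Cont_\bounded(\overline I;\Ell^2(\mms))$. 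Its uniform limit $\tilde u$ is then continuous on $\overline I$ with values in $\Ell^2(\mms)$, and since $u_\varepsilon\to u$ in $\Ell^2(I,\Leb^1;\Ell^2(\mms))$ from the first convergence above, $\tilde u$ is an $\Leb^1$-representative of $u$ on $I$.

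The routine parts are the mollification construction and the chain-rule identity for smooth trajectories in a Gelfand triple. The main delicate step I expect is the extension argument at the endpoints of $I$ (in particular when $I$ is unbounded or half-open), which is needed to ensure the mollified approximants are defined on a neighborhood of $\overline I$ and thereby guarantee continuity up to the boundary rather than merely on the interior; once this is handled, the bulk of the argument is essentially the polarized energy identity.
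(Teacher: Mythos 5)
The paper does not give a proof of this lemma at all: it is stated as a known fact with a pointer to \cite[p.~285]{sturm1995}, and the next sentence simply fixes notation accordingly. So there is nothing in the paper to compare your argument against; what you have written is the proof one would be expected to supply or look up.

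Your mollification argument is the classical proof of the embedding $W^{1,2}(I;\F,\F^{-1})\hookrightarrow \Cont_\bounded(\overline I;\Ell^2(\mms))$ for the Gelfand triple $\F\hookrightarrow \Ell^2(\mms)\hookrightarrow\F^{-1}$ (as in Lions--Magenes, or Evans \S 5.9.2), and the plan is essentially correct. Two points are worth making precise. First, when $I$ is unbounded the average in $s$ must be taken over a fixed bounded subinterval $J\subset I$ of positive finite length, not over an arbitrary ``set of positive measure''; this is both necessary (the average over an infinite-measure set is not defined) and sufficient, because the integral term in the polarized energy identity is already bounded uniformly in $s,t\in\overline I$ by
\begin{align*}
\Big\vert\int_s^t \big\langle\dot u_\varepsilon-\dot u_{\varepsilon'},\,u_\varepsilon-u_{\varepsilon'}\big\rangle_{\F^{-1},\F}\d r\Big\vert
\leq
\big\Vert \dot u_\varepsilon-\dot u_{\varepsilon'}\big\Vert_{\Ell^2(I;\F^{-1})}\,\big\Vert u_\varepsilon-u_{\varepsilon'}\big\Vert_{\Ell^2(I;\F)},
\end{align*}
while $\fint_J \Vert u_\varepsilon(s)-u_{\varepsilon'}(s)\Vert_{\Ell^2(\mms)}^2\d s\to 0$ since $u_\varepsilon\to u$ in $\Ell^2(I;\F)\hookrightarrow \Ell^2(I;\Ell^2(\mms))$. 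Second, the extension to a larger interval $\tilde I$ by reflection at the finite endpoints should be checked to preserve membership in $\F(\tilde I\times\mms)$; this is routine because reflection is bounded on $\Ell^2(\cdot;\F)$ and commutes (up to a sign on the derivative) with distributional time-differentiation with values in $\F^{-1}$, so no issue arises. With these clarifications, your plan fills in the argument that the paper delegates to the reference.
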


Without further notice, every element in $\F(I\times\mms)$ will thus be identified with its $\Leb^1$-version in $\smash{\Cont_\bounded(\overline{I};\Ell^2(\mms))}$.

Next, we pass over to local spaces. Let $U\subset\mms$ be  open, and let $\F_\loc(I\times U)$ denote the space consisting of all strongly measurable $u\colon I\to \Ell^0(U)$ such that for every relatively compact, open $O\subset\subset U$ and every open $J\subset\subset I$ there exists $u'\in \F(I\times\mms)$ with $\One_{O}\,u'=\One_{O}\,u$ on $J$.

Lastly, $\F_0(I\times U)$ consists of all $u\in \F(I\times \mms)$ such that for $\smash{\Leb^1}$-a.e.~$t\in I$, $\supp u(t)$ is compact in $U$. This means that $u(t) = 0$ on $\partial U$  for $\smash{\Leb^1}$-a.e.~$t\in I$, but not necessarily $u(\inf I) = 0$, provided $\inf I\in\R$, or $u(\sup I) = 0$, provided $\sup I \in\R$.

%\begin{definition}\label{Def:NM} A function $u$ is termed \emph{global subsolution} of the heat equation on $I\times\mms$ if $u\in \F(I\times \mms)$ and for every nonnegative $\phi\in\F(I\times\mms)$,
%\begin{align*}
%\int_I \Ch\big(u(t),\phi(t)\big)\d t + \int_I \big\langle \dot{u}(t)\,\big\vert\, \phi(t)\big\rangle\d t \leq 0.
%\end{align*}
%If additionally, the function $-u$ is a global subsolution as well, we call $u$ \emph{global solution} of the heat equation on $I\times\mms$.
%\end{definition}

\begin{definition}\label{Def:DL} A function $u$ is termed \emph{local subsolution} of the heat equation on $I\times U$ if $u\in \F_\loc(I\times U)$ and for every $J\subset\subset I$ and every nonnegative $\phi\in \F_0(I\times U)$, 
	\begin{align*}
	\int_J \Ch\big(u(t),\phi(t)\big)\d t + \int_J \big\langle \dot{u}(t) \,\big\vert\,\phi(t) \big\rangle\d t \leq 0.
	\end{align*}
	If additionally, the function $-u$ is a local subsolution on $I\times U$ as well, we call $u$ \emph{local solution} of the heat equation on $I\times U$.
\end{definition}

\subsubsection{$L^2$- and $L^1$-mean value inequalities} With this  concept of local subsolutions, we now state and prove two mean value inequalities of $\Ell^2$- and $\Ell^p$-type, $p\in [1,2]$. In their form from \autoref{Th:L^2 mean value} and \autoref{Th:L^p mean value} below and apart from global doubling assumptions, we have only been able to find these for smooth Riemannian manifolds in \cite{grigoryan1991, grigoryan2009, guneysu2017b}. The proofs in our more general case are similar, but unlike \cite{grigoryan1991, grigoryan2009, guneysu2017b} we have to take care that local solutions are not defined to be $\smash{\Cont^2}$ in general, but rather in a weak sense recorded in \autoref{Def:DL} below.

\begin{proposition}\label{Th:L^2 mean value} Let $\Ball_r(x)\subset \mms$, $x\in\mms$ and $r>0$, be a relatively compact ball. Suppose that for some $a,\caln > 0$, we have the \emph{Faber--Krahn inequality}
	\begin{align*}
	\lambda(O) \geq a\,\vol[O]^{-2/\caln}
	\end{align*}
	for every open set $O \subset \Ball_r(x)$. Then there exists a constant $C>0$ depending only on $n$ such that for every $t>0$, every jointly continuous local subsolution $u$ of the heat equation on $(0,t)\times \Ball_r(x)$, and every $\varepsilon \in (0,t/4)$,
	\begin{align*}
	u_+^2(t-\varepsilon)(x) \leq \frac{C\,a^{-\caln/2}}{\min\{\sqrt{t-2\varepsilon},r\}^{\caln+2}}\int_\varepsilon^{t-\varepsilon}\!\!\int_{\Ball_{r}(x)} u_+^2(s)\d\vol\d s.
	\end{align*}
\end{proposition}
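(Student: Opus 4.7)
The plan is to adapt the Moser-type iteration argument of \cite[Thm.~15.1]{grigoryan2009} (cf.~also \cite[Thm.~IV.11]{guneysu2017}) to our setting. These references prove exactly this kind of $\Ell^2$-mean value inequality on smooth Riemannian manifolds, assuming only a Faber--Krahn inequality on the relevant ball. Since the Faber--Krahn inequality is a hypothesis here too, and every other ingredient of the argument is purely Dirichlet-space theoretic, the strategy should transfer almost verbatim. The one point that needs care is that our subsolutions are defined in the weak sense of \autoref{Def:DL} rather than classically.

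The implementation would proceed as follows. First, I introduce a nested family of shrinking space-time cylinders $Q_k := (\tau_k, t-\varepsilon) \times \Ball_{r_k}(x)$ with $\tau_k \nearrow t-\varepsilon$ and $r_k \searrow r_\infty \in (0,r)$ chosen so that $\tau_0 = \varepsilon$, $r_0 = r$, and $r_\infty$ and $t - 2\varepsilon - \tau_\infty$ are both of order $\min\{\sqrt{t-2\varepsilon},r\}$. For each $k$, one picks compatible Lipschitz cut-offs $\eta_k$ in space (supported in $\Ball_{r_{k-1}}(x)$, equal to $1$ on $\Ball_{r_k}(x)$) and smooth cut-offs $\chi_k$ in time. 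Testing \autoref{Def:DL} against $\phi(s,\cdot) := u_+^{p-1}(s,\cdot)\,\eta_k^2\,\chi_k^2(s)$ for $p \geq 2$ (after truncating $u_+$ at a level $N$ and then sending $N\to\infty$), and invoking the chain rule in $\F$, one obtains the energy inequality
\begin{align*}
\sup_{s\in (\tau_k,t-\varepsilon)} \!\!\int u_+^p\,\eta_k^2\chi_k^2(s)\d\vol + c\int_{\tau_k}^{t-\varepsilon}\!\!\!\!\!\!\int \big\vert \rmd\!\left(u_+^{p/2}\eta_k\right)\!\big\vert^2\chi_k^2\d\vol\d s \leq C_k\!\int_{Q_{k-1}} \!\!u_+^p\d\vol\d s,
\end{align*}
with $C_k$ depending on the Lipschitz norms of $\eta_k,\chi_k$, hence on $k$ in a controllable way. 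The Faber--Krahn hypothesis, via Cauchy--Schwarz applied to $\vol[\{f\neq 0\}]^{-2/n}\|f\|_2^2 \leq a^{-1}\,2\Ch(f)$, upgrades to the Nash-type inequality $\|f\|_2^{2+4/n} \leq C a^{-1}\,\Ch(f)\,\|f\|_1^{4/n}$ for every $f\in\F_0(\Ball_r(x))$. Coupling the two and iterating on the sequence $p_k := 2^{k+1}$ yields a telescoping bound
\begin{align*}
\sup_{Q_\infty} u_+^2 \leq \frac{C(n)\,a^{-n/2}}{\min\{\sqrt{t-2\varepsilon},r\}^{n+2}}\int_{Q_0} u_+^2 \d\vol\d s,
\end{align*}
and $(t-\varepsilon,x)\in Q_\infty$ by construction, giving the claim (the dimensional constant $C(n)$ arises from summing a geometric series in $k$).

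The main obstacle is the low regularity of $u$: local subsolutions in $\F_\loc(I\times\Ball_r(x))$ need not admit pointwise time derivatives, so plugging $u_+^{p-1}\eta_k^2\chi_k^2$ into \autoref{Def:DL} and performing the chain rule in time are not immediately legitimate. The standard remedy is Steklov averaging: one replaces $u(s)$ by $u^h(s) := h^{-1}\int_s^{s+h} u(\tau)\d\tau$, which lies in $\F$ pointwise in $s$ with classical time derivative $h^{-1}(u(s+h)-u(s))$, performs all chain-rule manipulations on $u_+^h$, and then passes to the limit $h\downarrow 0$ using \autoref{Le:Cts ext} to ensure convergence of the boundary terms. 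The truncation $\min\{u_+,N\}\wedge N$ and Lebesgue dominated convergence handle the unboundedness of $u_+^{p-1}$ for $p>2$. Once this justification is in place, the rest of the iteration is purely algebraic.
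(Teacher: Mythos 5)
Your sketch is sound but takes a genuinely different technical route from the paper. The paper's proof is very short: it performs a time shift, setting $v(s) := u(s + \varepsilon)$, so that $v$ belongs to $\F_\loc((-\varepsilon/2,\, t-3\varepsilon/2)\times\Ball_{r-\iota}(x))$ and the closed test interval $[0,\,t-2\varepsilon]$ sits compactly inside the shifted time interval; this makes the endpoint value $v(t-2\varepsilon)$ well-defined by \autoref{Le:Cts ext} without any averaging. It then shrinks the spatial radius to $r-\iota$, declares that the argument of \cite[Sec.~15.1]{grigoryan2009} applies verbatim with $u$, $T$, $R$ replaced by $v$, $t-2\varepsilon$, $r-\iota$, and sends $\iota\to 0$ at the very end. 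Thus the paper's device for taming the weak time regularity is the shift of the time variable rather than your Steklov averaging, and it does not rebuild the iteration machinery at all --- it cites it. You, by contrast, reconstruct the iteration from scratch (Caccioppoli estimate, Nash-type upgrade of Faber--Krahn, Moser iteration on powers $p_k = 2^{k+1}$); Grigor'yan's own mechanism in Section~15.1 is a level-set iteration rather than a power iteration, so even the iteration scheme you propose differs from the one the paper implicitly invokes, though both lead to a mean value inequality of the stated form and track the constants the same way. Both approaches are correct. The advantage of your route is that the distributional time derivative and the chain rule for $u_+^{p-1}$ are handled explicitly; the advantage of the paper's route is brevity, and the time shift is arguably cleaner than Steklov averaging because it eliminates the boundary issue in one move rather than by a limiting procedure.
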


\begin{remark} Note that, unlike the expression $u(t-\varepsilon)$, $\varepsilon \in (0,t/4)$, in \autoref{Th:L^2 mean value}, the evaluation $u(t)$ does not need to make sense in general (in contrast to the $\smash{\Cont^2}$-notion of \cite[Thm.~15.1]{grigoryan2009}) by definition of $\F_\loc((0,t)\times\Ball_r(x))$. 
	
For the same reason, it might a priori happen that $\smash{\Vert u_+\Vert_{\Ell^2(\Ball_r(x))}}$ is not square-integrable on $(\varepsilon, t-\varepsilon)$ even for $\varepsilon =0$. However, these degeneracy issues can always be circumvented by shrinking the radius $r$.
\end{remark}

\begin{proof}[Proof of \autoref{Th:L^2 mean value}] In terms  of local subsolutions as in \autoref{Def:DL}, the smooth argument for  \cite[Thm.~15.1]{grigoryan2009} carries over without essential changes in the following setup. Consider the  function $v\in \F_\loc((-\varepsilon/2,t-3\varepsilon/2)\times\Ball_{r-\iota}(x))$ with
	\begin{align*}
	v(s) := u(s+\varepsilon),
	\end{align*}
	where $\iota\in (0,r)$. In fact, the map $s\mapsto \Vert v(s)\Vert_{\Ell^2(\Ball_{r-\iota}(x))}$ belongs to $\Ell^2((0,t-2\varepsilon),\Leb^1)$, and $v$ has a well-defined value at $t-2\varepsilon$ by \autoref{Le:Cts ext}. Clearly, $v$ is a local solution to the heat equation on $(-\varepsilon/2,t-3\varepsilon/2)\times \Ball_{r-\iota}(x)$. Even better, the shifting of $u$ by $\varepsilon$ allows us to take $J := (0,t-2\varepsilon)$ as test interval in  \autoref{Def:DL}. It therefore remains to follow the lines in \cite[Sec.~15.1]{grigoryan2009} with $u$ replaced by $v$, $T$ replaced by $t-2\varepsilon$, and $R$ replaced by $r-\iota$ therein. This entails the existence of a constant $C>0$ depending only on $\caln$ such that
	\begin{align*}
	v_+^2(t-2\varepsilon)(x) \leq \frac{C\,a^{-\caln/2}}{\min\{\sqrt{t-2\varepsilon}, r-\iota\}^{\caln+2}}\int_0^{t-2\varepsilon}\!\!\int_{\Ball_{r-\iota}(x)} v_+^2(s)\d\vol\d s,
	\end{align*}
	which directly provides the asserted inequality by sending $\iota \to 0$.
\end{proof}

\begin{theorem}\label{Th:L^p mean value} For every $\caln>0$, there exists a constant $C>0$ with the following property. For every $x\in\mms$ and every $r>0$ such that $\Ball_r(x)\subset\mms$ is relatively compact and for which there exists $a>0$ such that
	\begin{align*}
	\lambda_1(O) \geq a\,\vol[O]^{-2/\caln}
	\end{align*}
	for every open $O\subset\Ball_r(x)$, for every $\tau,t>0$ with $\tau \leq r^2$ and $\tau < t$, every jointly continuous nonnegative local subsolution $u$ of the heat equation on $\smash{(t-\tau,t)\times\Ball_{\sqrt{\tau}}(x)}$ such that $\smash{u\in\F_\loc((t-\tau-\zeta,t+\zeta)\times \Ball_{\sqrt{\tau}+\zeta}(x))}$ for some error $\zeta\in (0,t -\tau)$, and every $q\in [1,2]$, 
	\begin{align*}
	u^q(t)(x) \leq \frac{C\,a^{-\caln/2}}{\tau^{1+\caln/2}}\int_{t-\tau}^t\!\int_{\Ball_{\sqrt{\tau}}(x)} u^q(s)\d\vol\d s.
	\end{align*} 
\end{theorem}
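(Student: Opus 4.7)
The plan is to reduce the $L^q$-mean value inequality, $q\in[1,2]$, to the already-proven $L^2$-inequality \autoref{Th:L^2 mean value} via the Li--Wang-type Moser iteration scheme originally devised in \cite{li1999} and adapted to smooth Riemannian manifolds in \cite{guneysu2017b}. The case $q=2$ is essentially immediate: after a time-shift, applying \autoref{Th:L^2 mean value} on $(0,\tau)\times\Ball_{\sqrt\tau}(x)$ and letting $\varepsilon\to 0$ via joint continuity yields precisely the asserted bound. The substantive content lies in $q\in[1,2)$, which is approached by iterating on the family of parabolic cylinders $Q(\rho):=(t-\rho^2,t)\times\Ball_{\rho}(x)$, $\rho\in(0,\sqrt\tau]$, together with the supremum functional $M(\rho):=\sup_{Q(\rho)} u$.

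The first step is a localized $L^2$-bound. For $0<\rho<\rho'\leq\sqrt\tau$ and any $(s,y)\in Q(\rho)$, applying \autoref{Th:L^2 mean value} to a time-shifted version of $u$ on $(s-(\rho'-\rho)^2,s)\times\Ball_{\rho'-\rho}(y)\subset Q(\rho')$ --- a containment valid since $\met(x,y)<\rho$ and $\rho^2+(\rho'-\rho)^2\leq\rho'^2$ --- and letting $\varepsilon\to 0$ via joint continuity produces
\[
u^2(s)(y) \leq \frac{C\,a^{-\caln/2}}{(\rho'-\rho)^{\caln+2}}\int_{Q(\rho')} u^2\,\d\vol\,\d\tilde s.
\]
Interpolating $u^2\leq M(\rho')^{2-q}\,u^q$ on $Q(\rho')$ and taking the sup over $(s,y)\in Q(\rho)$ on the left gives the recursion
\[
M(\rho)^2 \leq \frac{C\,a^{-\caln/2}}{(\rho'-\rho)^{\caln+2}}\,M(\rho')^{2-q}\,A,\qquad A:=\int_{Q(\sqrt\tau)} u^q\,\d\vol\,\d\tilde s.
\]

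A Young inequality with conjugate exponents $\tfrac{2}{2-q}$ and $\tfrac{2}{q}$ then absorbs half of $M(\rho')^2$:
\[
M(\rho)^2 \leq \tfrac12\,M(\rho')^2 + C_q\!\left(\frac{a^{-\caln/2}\,A}{(\rho'-\rho)^{\caln+2}}\right)^{\!\!2/q},
\]
with a constant $C_q$ that stays bounded as $q$ ranges over $[1,2]$. Applying this recursion along a geometric sequence $\rho_k=\tfrac{\sqrt\tau}{2}+\tfrac{\sqrt\tau}{2}(1-c^k)$ --- with $c\in(0,1)$ chosen so that $\tfrac12\,c^{-2(\caln+2)/q}<1$ so the induced series converges uniformly in $q\in[1,2]$ --- and sending $N\to\infty$ to suppress the tail $2^{-N}M(\rho_N)^2$ yields, after raising to the $q/2$-th power,
\[
M(\sqrt\tau/2)^q \leq \frac{C'\,a^{-\caln/2}}{\tau^{1+\caln/2}}\,A.
\]
Joint continuity extends $u$ up to the parabolic top, so that $u(t)(x)\leq M(\sqrt\tau/2)$ and the claim follows.

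The main obstacle is the a priori finiteness of $M(\sqrt\tau)$, which is needed both to suppress the tail in the iteration and to meaningfully evaluate $u(t)(x)$ at the parabolic top. This is precisely why the statement includes the auxiliary hypothesis $u\in\F_\loc((t-\tau-\zeta,t+\zeta)\times\Ball_{\sqrt\tau+\zeta}(x))$: the $\zeta$-buffer allows a single application of \autoref{Th:L^2 mean value}, pointwise on $\overline{Q(\sqrt\tau)}$, to convert the guaranteed $L^2$-mass of $u$ on the enlarged cylinder into a crude but finite upper bound on $M(\sqrt\tau)$, which is enough to launch the iteration cleanly.
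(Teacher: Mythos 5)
Your proposal follows the same two-step strategy as the paper: first establish the $q=2$ case by applying \autoref{Th:L^2 mean value} to the time-shifted function $v(s):=u(t-\tau+s)$ (the paper additionally shrinks the radius by a small $\iota$ and then lets $\iota\to 0$, a minor technicality), then invoke Li--Wang's $\Ell^2$-to-$\Ell^q$ reduction. Where the paper simply cites \cite{li1999}, \cite[Thm.~IV.15]{guneysu2017} and \cite[Prop.~2.10]{guneysu2017b} as applying \emph{verbatim}, you carry out the parabolic Moser--Li--Wang iteration explicitly (nested cylinders, interpolation $u^2\le M(\rho')^{2-q}u^q$, Young absorption, geometric iteration radii), and your details check out; in particular the containment $(s-(\rho'-\rho)^2,s)\times\Ball_{\rho'-\rho}(y)\subset Q(\rho')$ and the uniformity of the Young constant and the ratio $\tfrac12 c^{-2(\caln+2)/q}$ over $q\in[1,2]$ are correctly handled. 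So this is the same route, with the cited iteration unpacked rather than referenced.

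One small caution on the finiteness of $M(\sqrt\tau)$: the $\zeta$-buffer does not by itself license a uniform application of \autoref{Th:L^2 mean value} at boundary points of $\overline{\Ball_{\sqrt\tau}(x)}$, because the Faber--Krahn hypothesis is only posited on $\Ball_r(x)$ and in the borderline case $r=\sqrt\tau$ there is no room to fit $\Ball_\delta(y)\subset\Ball_r(x)$ for $y\in\partial\Ball_{\sqrt\tau}(x)$. The cleaner fix, consistent with the Li--Wang scheme you are following, is to run the iteration with $\rho_k\nearrow\sqrt\tau-\delta$ for a fixed $\delta>0$ (where $M(\sqrt\tau-\delta)<\infty$ by joint continuity on the compact set $\overline{Q(\sqrt\tau-\delta)}$, using the $\F_\loc$ membership on the $\zeta$-enlarged cylinder only to make sense of $u$ at time $t$), and let $\delta\to 0$ at the very end. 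This does not change the constant or the conclusion. The paper's proof sweeps the same technicality under the ``verbatim'' citation, so your exposition is, if anything, more explicit on this point even if the stated fix needs the small adjustment above.
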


\begin{proof} We apply \autoref{Th:L^2 mean value} to the radius $r := \sqrt{\tau}$ and to the local subsolution $\smash{v\in \F_\loc((0,\tau)\times\Ball_{\sqrt{\tau}-\iota}(x))}$ to the heat equation on $\smash{(0,\tau)\times\Ball_{\sqrt{\tau}-\iota}(x)}$, with
	\begin{align*}
	v(s) := u(t-\tau + s),
	\end{align*}
	where $\iota\in (0,\sqrt{\tau})$. Given any $\varepsilon\in (0,\tau/4)$, we thus get
	\begin{align*}
	v^2(\tau-\varepsilon)(x) \leq \frac{C\,a^{-\caln/2}}{\min\{\sqrt{\tau-2\varepsilon},\sqrt{\tau}-\iota\}^{\caln+2}}\int_\varepsilon^{\tau-\varepsilon}\!\!\int_{\Ball_{\sqrt{\tau}-\iota}(x)} v^2(s)\d\vol\d s
	\end{align*}
	for some universal constant $C>0$ depending only on $\caln$. By a change of variables on the r.h.s., employing that by assumption, the function $\smash{s\mapsto\Vert u(s)\Vert_{\Ell^2(\Ball_{\sqrt{\tau}}(x))}}$ belongs to $\smash{\Ell^2((t-\tau,t),\Leb^1)}$ and that $u(t-\varepsilon)(x) \to u(t)(x)$ as $\varepsilon\to 0$, and sending  $\varepsilon\to 0$ and $\iota \to 0$, we obtain
	\begin{align*}
	u^2(t)(x) \leq \frac{C\,a^{-\caln/2}}{\tau^{1+\caln/2}}\int_{t-\tau}^t\!\int_{\Ball_{\sqrt{\tau}}(x)} u^2(s)\d\vol\d s.
	\end{align*} 
	
	From here on, Li--Wang's $\Ell^2$-to-$\Ell^q$ reduction procedure \cite{li1999} from the proofs of \cite[Thm.~IV.15]{guneysu2017} and \cite[Prop.~2.10]{guneysu2017b} applies \emph{verbatim} and gives the claim.
\end{proof}

\subsection{Control pairs and their general existence}\label{Sub:Gen ex} 

\subsubsection{Basic definition} The following is motivated by \cite[Def.~IV.16]{guneysu2017}.

\begin{definition}\label{Def:Heat kernel control pair} An ordered pair $(\Xi, \Sigma)$ is termed a \emph{heat kernel control pair} for $(\mms,\Rmet)$ if
\begin{enumerate}[label={\textnormal{\alph*.}}]
\item $\Xi \colon\mms\to (0,\infty)$ is continuous and bounded away from zero,
\item $\Sigma\colon (0,1]\to (0,\infty)$ is Borel measurable,
\item there exists a constant $C>0$ such that for every $x\in\mms$ and every $t\in (0,1]$,
\begin{align}\label{Eq:Heat kernel bound control pair}
\sup_{y\in\mms} \sfp(t,x,y) \leq C\,\Sigma(t)\,\Xi(x),
\end{align}
\item\label{La:Integr caln} for every  $p \in [1, \infty)$ provided $d =1$ and every $p\in (d/2, \infty)$ provided $d\geq 2$, there exists a constant $C>0$ such that
\begin{align*}
\int_0^\infty \!\Sigma^{1/p}(t)\,\rme^{-Ct}\d t < \infty.
\end{align*}
\end{enumerate}
\end{definition}

\autoref{Def:Heat kernel control pair} is still meaningful on any quasi-regular, strongly local Dirichlet space $(\mms,\Ch,\meas)$ with heat kernel --- in this case, we will usually speak about heat kernel control pairs for $(\mms,\Ch,\meas)$. There are many known sufficient conditions for these to admit control pairs, compare with \autoref{Sub:Quasi isom}. On the other hand, our main result from \autoref{Prop:heat kernel bound} below --- namely that $\mms$ \emph{always} admits a canonical heat kernel control pair --- really uses the (Lipschitz) manifold structure. 

%by  carefully adapting the arguments from \cite{guneysu2017, guneysu2017b} to our setting. We will need certain mean-value inequalities for local subsolutions to the heat equation, stated in  \autoref{Th:L^2 mean value} and \autoref{Th:L^p mean value}. In their form below and apart from global doubling assumptions, we have only been able to find these in the context of smooth Riemannian manifolds in \cite{grigoryan1991, grigoryan2009}. The proofs in our more general case are similar, but unlike \cite{grigoryan1991, grigoryan2009} we have to take care that local solutions are not defined to be $\smash{\Cont^2}$ in general, but rather in a weak sense recorded in \autoref{Def:DL} below.

\subsubsection{The Euclidean radius}\label{Subsub:Eucl Rad} We introduce the following concept which extends the smooth treatise from  \cite[Def.~IV.12]{guneysu2017}. The idea behind it, in view of \autoref{Prop:heat kernel bound},  is to transfer the global constants from the Euclidean Faber--Krahn inequality  to $\mms$, paying the price of possibly non-uniformity of the size of the considered subsets of $\mms$ on which the latter holds.

Compared to \cite{guneysu2017}, additional care has to be taken to the possible presence of a boundary of $\mms$.

\begin{definition}\label{Def:Eucl rad} Given any $x\in\mms$ and  $b>1$, we define $r_\rmE(x,b)$ as the sup\-remum over all radii $r>0$ such that
\begin{enumerate}[label=\textnormal{\alph*.}]
\item $\Ball_r(x) \subset \mms$ is relatively compact, and
\item\label{La:Chart b} there exists  a chart $(U,\Phi)$ around $x$ such that $\Ball_r(x) \subset U$, $\smash{\Phi\big\vert_{\Ball_r(x)}}$ is a homeomorphism to an open subset $\Phi(\Ball_r(x))$ of $\smash{\R^d}$ or $\smash{\R^{d-1}\times [0,\infty)}$, and for every $\xi\in \smash{\R^d}$,
\begin{align}\label{Eq:Quasi-isom}
b^{-1}\,\vert\xi\vert^2 \leq \Rmet^U(\xi,\xi) \leq b\,\vert\xi\vert^2 \quad\Leb^d\text{-a.e.}\quad\text{on }\Phi(\Ball_r(x)),
\end{align}
where $\smash{\Rmet^U\colon \Phi(U) \to \bigsqcup_{z\in \Phi(U)}(\R^d)^{*\otimes 2}}$ is the Riemannian metric on the image of the chart $(U,\Phi)$ according to \autoref{Def:Riem metrics}.
\end{enumerate}
%We call $r_\rmE(x,b)$ the \emph{Euclidean radius} of $\mms$ at $x$ with accuracy $b$. Given any radius $r\in (0,r_\rmE(x,b))$, every coordinate system on $\Ball_r(x)$ for which \eqref{Eq:Quasi-isom} holds is called a \emph{Euclidean coordinate system} with accuracy $b$.
\end{definition}

\begin{remark}\label{Re:Infinity} Note that either $r_\rmE(\mms,b) \subset (0,\infty)$ or $r_\rmE(\mms,b) = \{\infty\}$ for every $b>1$. This can easily be seen from the definition, or alternatively using the $1$-Lipschitz property from \autoref{lemma:propr} below.
\end{remark}

\begin{example}\label{Ex:Spher symm} 
Let us consider the particular case of  spherically symmetric manifolds (see e.g.~\cite[Ch.~8]{coulhon1997}). Let $d\geq 2$, fix a point $\smash{o \in \R^d}$ and a positive bi-Lipschitz function $\psi\colon [0,\infty)\to [0,\infty)$ which is smooth at $0$ and satisfies
\begin{align}\label{eq:incond}
\begin{split}
\psi(0) &= 0,\\ \psi'(0) &= \text{Lip}(\psi).
\end{split}
\end{align}
We define a spherically symmetric LR manifold $(\mms_\psi,\Rmet_\psi)$ in the following way.
\begin{enumerate}[label=\alph*.]
\item As a set of points, $\mms_\psi$ is $\R^d$.
\item In polar coordinates $\smash{(r, \theta) \in (0,\infty) \times \boldsymbol{\mathrm{S}}^{d-1}}$ at $o$  the Riemannian metric $\Rmet_\psi$ on $\mms_\psi\setminus \{o\}$ is defined as
\begin{equation}\label{eq:sphsym}
\rmd s^2 = \rmd r^2 + \psi^2(r) \d \theta^2,
\end{equation}
where $\rmd \theta^2$ denotes the standard Riemannian metric on $\smash{\boldsymbol{\mathrm{S}}^{d-1}}$.
\item The Riemannian metric $\Rmet_\psi$ at $o$ is a smooth extension of \eqref{eq:sphsym}, whose existence is ensured by \eqref{eq:incond}.
\end{enumerate}
Since  $\psi$ is bi-Lipschitz, we have in particular that
\[
\rmd r^2 + \text{Lip}(\psi)^{-2}\, r^2 \d \theta^2 \le \rmd r^2 + \psi^2(r) \d \theta^2 \le \rmd r^2 + \text{Lip}(\psi)^2\, r^2 \d \theta^2.
\]
Recalling that $\d r^2 + r^2 \d \theta^2$ is the standard Euclidean metric of $\R^d$, the above chain of inequalities ensures that, for any $b > \text{Lip}(\psi)$, it holds $r_\rmE(o, b) = \infty$. Therefore, by the previous \autoref{Re:Infinity} we have $r_\rmE(\mms_\psi, b) = \{\infty\}$ for every $b>\Lip(\psi)$.
\end{example}

\begin{lemma}\label{lemma:propr} For every $x\in\mms$ and for every $b > 1$, the following properties hold.
\begin{enumerate}[label=\textnormal{\textcolor{black}{(}\roman*\textcolor{black}{)}}]
\item\label{La:i} We have $r_\rmE(x,b) \in (0, \infty]$.
\item\label{La:r_E properties} For every $\varepsilon > 0$, the function $\smash{x\mapsto \min\{r_\rmE(x,b), \varepsilon\}}$ on $\mms$ is $1$-Lipschitz w.r.t.~$\met$. In particular,  for every compact set $K\subset\mms$,
\begin{align*}
\inf_{x\in K} r_\rmE(x,b) > 0.
\end{align*}
\end{enumerate}
\end{lemma}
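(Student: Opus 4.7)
The plan is to dispatch the two items in turn: \ref{La:i} by straightening the metric via a linear change of coordinates together with a continuity argument, and \ref{La:r_E properties} by a direct triangle-inequality estimate.

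For \ref{La:i}, I would fix $x\in\mms$ and any chart $(U,\Phi)$ of the given atlas around $x$. Since $\Rmet$ is locally Lipschitz by \autoref{Def:Lip R mfld}, the coordinate matrix $\Rmet^U$ is continuous on $\Phi(U)$, so $A := \Rmet^U(\Phi(x))$ is a well-defined symmetric positive definite matrix. The key maneuver is to run Gram--Schmidt on the standard basis $e_1,\dots,e_d$ with respect to the $A$-inner product, \emph{processing $e_d$ last}. This produces $P\in\mathrm{GL}(d,\R)$ with $P^\top AP=\mathrm{Id}$; the chosen order ensures that the first $d-1$ columns of $P$ span $\R^{d-1}\times\{0\}$ while the last has strictly positive final coordinate, so $P$ preserves $\R^{d-1}\times[0,\infty)$, which is what is needed in the boundary-chart case. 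Then $\tilde\Phi := P^{-1}\circ(\Phi-\Phi(x))$ is a chart of the same type around $x$ whose coordinate metric equals $\mathrm{Id}$ at the origin, so by continuity \eqref{Eq:Quasi-isom} is satisfied for $\tilde\Phi$ on some Euclidean ball of radius $\rho>0$ about $0$. Combining this with the local equivalence of $\met$ and the Euclidean chart distance from \autoref{Re:Equivalent metric}, I can arrange $\Ball_r(x)\subset \tilde\Phi^{-1}(B_\rho^{\R^d}(0))$ for all sufficiently small $r>0$, proving $r_\rmE(x,b)\geq r>0$.

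For \ref{La:r_E properties}, the crux is the one-sided estimate $r_\rmE(y,b)\geq r_\rmE(x,b)-\met(x,y)$ for all $x,y\in\mms$. This is trivial when $r_\rmE(x,b)\leq\met(x,y)$; otherwise, for any $r$ with $\met(x,y)<r<r_\rmE(x,b)$ I would take a chart $(U,\Phi)$ witnessing \autoref{Def:Eucl rad} for $x$ at scale $r$. Since $y\in\Ball_r(x)\subset U$, the same chart is admissible around $y$; by the triangle inequality $\Ball_{r-\met(x,y)}(y)\subset\Ball_r(x)$, so \eqref{Eq:Quasi-isom} persists on the smaller ball and hence $r_\rmE(y,b)\geq r-\met(x,y)$; letting $r\uparrow r_\rmE(x,b)$ yields the bound. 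This incidentally recovers \autoref{Re:Infinity}: either $r_\rmE(\cdot,b)\equiv\infty$ (in which case $\min\{r_\rmE(\cdot,b),\varepsilon\}\equiv\varepsilon$ is trivially $1$-Lipschitz), or both values are finite, in which case swapping the roles of $x$ and $y$ gives $|r_\rmE(x,b)-r_\rmE(y,b)|\leq\met(x,y)$, and postcomposing with the $1$-Lipschitz truncation $t\mapsto\min\{t,\varepsilon\}$ concludes. The ``in particular'' statement then follows because continuity of $\min\{r_\rmE(\cdot,b),\varepsilon\}$ and its strict positivity from \ref{La:i} force it to be bounded below on any compact set by a positive constant, which is itself a lower bound for $r_\rmE(\cdot,b)$.

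The only delicate choice is the Gram--Schmidt ordering in \ref{La:i}: it is what guarantees that the straightening transformation is still compatible with the half-space structure of a boundary chart. Everything else is a standard continuity or triangle-inequality argument, so I do not anticipate further obstacles.
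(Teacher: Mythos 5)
Your proof is correct, and for part (ii) it follows essentially the same route as the paper: the crux in both is the one-sided estimate $r_\rmE(y,b)\geq r_\rmE(x,b)-\met(x,y)$, which the paper merely asserts ``by definition of $r(y)$''; you spell it out via the triangle inequality $\Ball_{r-\met(x,y)}(y)\subset\Ball_r(x)$ and reuse of the witnessing chart, and then you postcompose with the $1$-Lipschitz truncation rather than running the paper's explicit case analysis. For part (i), however, your argument is a genuine and welcome improvement. The paper fixes a chart, invokes \autoref{rmk:lambda} to get the ellipticity constant $\lambda_\alpha$, and then, when $\lambda_\alpha>b$, ``rescales the coordinate system'' by modifying eigenvalues of $\Rmet_\alpha^{-1/2}$; as written this step is ambiguous (the eigenstructure varies over the chart, and a single global linear rescaling in general cannot reduce the ellipticity constant of an $L^\infty$ metric) and it does not address whether the rescaling keeps a boundary chart mapping into $\R^{d-1}\times[0,\infty)$. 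Your version makes the implicit regularity explicit (continuity of the coordinate metric), diagonalizes only at the single point $\Phi(x)$, and then appeals to continuity to get \eqref{Eq:Quasi-isom} on a neighborhood; in particular, your Gram--Schmidt ordering (processing $e_d$ last) cleanly guarantees that the straightening map fixes $\R^{d-1}\times\{0\}$ setwise and sends $e_d$ to a vector with positive last coordinate, so it preserves the half-space, which is exactly the point the paper glosses over. One small remark: when $x\in\mathrm{int}(\mms)$ but the ambient chart happens to be a boundary chart, you should shrink it first so that its image lies in the open half-space (and can thus be regarded as a chart into $\R^d$); this makes the affine map $P^{-1}\circ(\Phi-\Phi(x))$ genuinely a chart of the required type. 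This is minor and does not affect the correctness of your argument.
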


\begin{proof} To prove \ref{La:i}, we first choose $r>0$ such that $\Ball_r(x) \subset U_\alpha$ is relatively compact for some fixed chart $(U_\alpha,\Phi_{\alpha})$, $\alpha\in A$. According to Remark \ref{rmk:lambda}, for this coordinate system we have the validity of \eqref{Eq:Quasi-isom}, where $b$ is replaced by a suitable $\lambda_\alpha \ge 1$. If  $\lambda_\alpha \le b$, \eqref{Eq:Quasi-isom} is automatically satisfied also for the given $b$, while in the case  $\lambda_\alpha \ge b$ we have to rescale the coordinate system. This is done by scaling $\smash{\Rmet_\alpha^{-1/2}}$ in such a way that its eigenvalues range between $\smash{\sqrt{b/\lambda_\alpha}}$ and $\smash{\sqrt{\lambda_\alpha/b}}$, namely multiplying those eigenvalues of $\smash{\Rmet_\alpha^{-1/2}}$ which are greater than 1 by the factor $\smash{1/\sqrt{b}}$, and those which are smaller than 1 by the factor $\smash{\sqrt{b}}$ . In such a way, we obtain a coordinate system for which \eqref{Eq:Quasi-isom} is satisfied, thus $0 < r \leq r_\rmE(x,b)$.

%As for the proof of \ref{La:i}, first of all, we observe that around each point $x \in \mms$ we can pick a coordinate system whose domain is included in a compact subset of $\R^d$. This ensures  that $r_\rmE(x,b) \in (0, \infty]$. Hence, according to Remark \ref{rmk:lambda}, for this coordinate system we have the validity of \eqref{Eq:Quasi-isom}, where $b$ is replaced by a suitable $\lambda_\alpha \ge 1$. Now,  when $\lambda_\alpha \le b$, the inequality \eqref{Eq:Quasi-isom} is automatically satisfied also for the given $b$, while in the case in which $\lambda_\alpha \ge b$ we have to rescale the coordinate system (this can be done by scaling the operator $\sqrt{g_\alpha^{-1}}$ in such a way that its eigenvalues range between $\sqrt{b/\lambda_\alpha}$ and $\sqrt{\lambda_\alpha/b}$, namely rescaling by the factor $b^{-1/2}$ the eigenvalues of $\sqrt{g_\alpha^{-1}}$ which are greater than 1 and by the factor $b^{\,1/2}$ the ones that are less than 1). In such a way, we obtain a coordinate system for which \eqref{Eq:Quasi-isom} is satisfied.

Next, we prove \ref{La:r_E properties}. To simplify notation, let us set $r(x) := r_\rmE(x,b)$ and $r_\varepsilon (x) := \min \{ r(x), \varepsilon  \}$,  $x \in \mms $ and $\varepsilon > 0$. It clearly suffices to prove the $1$-Lipschitz continuity of the function $x \mapsto r_\varepsilon(x)$ w.r.t.~the distance $\met$ defined in Section \ref{Sub:Metric structure}.

Let us first assume that $\smash{y \in \Ball_{r_\varepsilon(x)}(x)}$. By definition of $r(y)$, we have
\begin{align*}
	r(y) \geq r(x) - \met(x,y) \geq r_\varepsilon(x) - \met(x,y).
\end{align*}
Since $\met(x,y) < r_\varepsilon(x) \leq \varepsilon$, it also follows that
\begin{align*}
0 < r_\varepsilon(x) - \met(x,y) \leq \varepsilon,
\end{align*}
and in particular $r_\varepsilon(y) \geq r_\varepsilon(x) - \met(x,y)$. 
%In this case, a direct application of the triangular inequality implies that
%\[
%r(y) \ge r_\varepsilon(x) - \met(x, y) \text{ as well as } 0 < r_\varepsilon(x) - \met(x, y) < 1.
%\]
%making use of the definition of $r_\varepsilon(x)$ and the fact that $r_\varepsilon(x) > \met (x, y)$. This means in particular that $r_\varepsilon(y) \ge r_\varepsilon(x) - \met(x, y)$.
In the case $r_\varepsilon(x) \ge r_\varepsilon(y)$, we already get
\begin{align}\label{Eq:1Lip}
| r_\varepsilon(x) - r_\varepsilon(y) | \le \met(x, y).
\end{align}
Otherwise, if $r_\varepsilon(x) < r_\varepsilon(y)$ we have $x \in \Ball_{r_\varepsilon(y)}(y)$ and, as above, we derive the inequalities $r(x) \ge r_\varepsilon(y) - \met(x, y)$ and hence $r_\varepsilon(x) \ge r_\varepsilon(y) - \met(x, y)$, which is the remaining part to prove \eqref{Eq:1Lip}.

Now, let us assume that $y \notin \Ball_{r_\varepsilon(x)}(x)$, so that
\begin{align*}
	\met(x,y) \geq r_\varepsilon(x) \geq r_\varepsilon(x) - r_\varepsilon(y).
\end{align*} 
From this inequality, \eqref{Eq:1Lip} directly follows if $\smash{x\notin \Ball_{r_\varepsilon(y)}(y)}$ as well. Otherwise, if $\smash{x \in \Ball_{r_\varepsilon(y)}(y)}$ we can argue as above to prove that $r_\varepsilon(x) \ge r_\varepsilon(y) - \met(x, y)$. Since in this case $r_\varepsilon(y) > r_\varepsilon(x)$, we finally obtain \eqref{Eq:1Lip}.
\end{proof}

\begin{remark} With some technical effort employing the notions from \autoref{Sub:Metric structure}, following the lines of \cite[pp.~59--60]{guneysu2017} one can show the following. Given any $x\in\mms$, $b>1$ and $r \in (0,r_\rmE(x,b))$, we have the inclusions
	\begin{align*}
	B_{r/\sqrt{b}}(0) \subset \Phi(\Ball_r(x)) \subset B_{r\sqrt{b}}(0), 
	\end{align*}
	where $B$ denotes the Euclidean ball in $\R^d$ or $\R^{d-1}\times [0,\infty)$ (depending on whether $\Ball_r(x) \cap\partial\mms = \emptyset$ or not), and $(\Phi,U)$ is a chart witnessing \autoref{La:Chart b} w.r.t.~the data $x$ and $b$ in \autoref{Def:Eucl rad}. Moreover, we have
	\begin{align*}
	\met(x, z) \le \sqrt{b}\, \vert \Phi(z) - \Phi(x) \vert
	\end{align*}
	for every $\smash{z \in \Phi^{-1}(B_{r/\sqrt{b}}(0))}$, while for every $z \in \Ball_r(x)$,
	\begin{align*}
	| \Phi(z) -\Phi(x)| \le \sqrt{b}\, \met(x, z).
	\end{align*}
	These properties, however, are not needed in the proof of \autoref{Prop:heat kernel bound} below.
\end{remark}

\subsubsection{Main result and construction of the control pair} Based upon the Euclidean radius $r_\rmE$ from \autoref{Def:Eucl rad}, we now construct the desired general control pair.

We first establish the following local Faber--Krahn inequality with uniform constants. Let $\lambda_1(O)$ and $\mu_1(\Omega)$ denote the first eigenvalues of the Dirichlet Laplacians on given relatively compact, open domains $O\subset M$ and $\Omega\subset\smash{\R^d}$, respectively. Recall that by the Euclidean Faber--Krahn theorem \cite[p.~367]{grigoryan2009}, there exists a constant $c>0$ depending only on $d$ such that for every nonempty $\Omega$ as above,
\begin{align}\label{Eq:FK Eucl}
	\mu_1(\Omega) \geq c\,\Leb^d[\Omega]^{-2/d}.
\end{align}

\begin{lemma}\label{Le:FKM}
Given any $b>1$, $\varepsilon_1>0$ and $\varepsilon_2 > 1$, define $R\colon \mms\to (0,\varepsilon_1/\varepsilon_2)$ by
\begin{align}\label{Eq:R def}
	R(y) := \min\{r_\rmE(y,b),\varepsilon_1\}/\varepsilon_2.
\end{align}
Then there exists a constant $a>0$ depending only on $b$ and $d$ such that for every $x\in \mms$, $\Ball_{R(x)}(x)$ is relatively compact, and for every nonempty open $O\subset \Ball_{R(x)}(x)$,
 \begin{align*}
 	\lambda_1(O) \geq a\,\vol[O]^{-2/d}.
 \end{align*}
\end{lemma}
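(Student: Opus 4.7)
Fix $x \in \mms$. Since $\varepsilon_2 > 1$, we have $R(x) < \min\{r_\rmE(x,b), \varepsilon_1\} \le r_\rmE(x,b)$ strictly. By the supremum definition of $r_\rmE(x,b)$, I can pick $r$ with $R(x) < r < r_\rmE(x,b)$ (or any $r > R(x)$ if $r_\rmE(x,b) = \infty$) so that $\Ball_r(x)$ is relatively compact and a chart $(U,\Phi)$ witnesses item~\ref{La:Chart b} of \autoref{Def:Eucl rad} on $\Ball_r(x)$. In particular $\overline{\Ball_{R(x)}(x)} \subset \Ball_r(x) \subset U$ is compact, and $\Rmet^U$ satisfies the quasi-isometry \eqref{Eq:Quasi-isom} $\Leb^d$-a.e.\ on $\Phi(\Ball_r(x)) \supset \Phi(\Ball_{R(x)}(x))$.

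Now let $O \subset \Ball_{R(x)}(x)$ be open and nonempty, and let $u \in W^{1,2}_0(O)$. Set $\tilde u := u \circ \Phi^{-1}$ on $\Phi(O) \subset \Phi(\Ball_r(x))$, extended by zero. From \eqref{Eq:Quasi-isom} together with its consequence $b^{-d/2} \le \sqrt{\det \Rmet^U} \le b^{d/2}$ (dualising \eqref{Eq:Quasi-isom} to the co-metric) I plan to derive, via the chartwise definition of $\vol$ and of $|\rmd u|^2$:
\begin{align*}
\int_O |\rmd u|^2 \d\vol \ge b^{-(d+2)/2}\int_{\Phi(O)} |\rmd \tilde u|^2 \d\Leb^d, \quad \int_O u^2\d\vol \le b^{d/2}\int_{\Phi(O)} \tilde u^2\d\Leb^d,
\end{align*}
so that Rayleigh quotients transfer with the loss $b^{-(d+1)}$, namely
\begin{align*}
\lambda_1(O) \ge b^{-(d+1)}\,\inf_{\tilde u}\frac{\int |\rmd\tilde u|^2\d\Leb^d}{\int\tilde u^2\d\Leb^d},
\end{align*}
where the infimum runs over the $\tilde u$ arising from $u \in W^{1,2}_0(O)$. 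Likewise, $\vol[O] \le b^{d/2}\,\Leb^d[\Phi(O)]$ gives $\Leb^d[\Phi(O)]^{-2/d} \ge b^{-1}\,\vol[O]^{-2/d}$.

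In the interior case $\Ball_{R(x)}(x) \cap \partial \mms = \emptyset$, $\Phi(O)$ is an open subset of $\R^d$ and $\tilde u \in W^{1,2}_0(\Phi(O))$, so the Euclidean Faber--Krahn inequality \eqref{Eq:FK Eucl} bounds the Rayleigh quotient below by $c\,\Leb^d[\Phi(O)]^{-2/d}$, and combining yields the claim with $a = c\,b^{-(d+2)}$. The main obstacle is the boundary case, $\Phi(O) \subset \R^{d-1}\times[0,\infty)$, where $\tilde u$ need not vanish on $\R^{d-1}\times\{0\}$ because the Dirichlet form on $\mms$ carries natural Neumann-type behaviour at $\partial\mms$. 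I handle this by the \emph{reflection trick}: let $S$ denote reflection of $\R^d$ across $\R^{d-1}\times\{0\}$, set $O' := \Phi(O) \cup S(\Phi(O))$, and even-extend $\tilde u$ to $\hat u(\xi',\xi_d) := \tilde u(\xi', |\xi_d|)$. A standard argument shows $\hat u \in W^{1,2}_0(O')$, $\int |\rmd \hat u|^2 = 2\int |\rmd \tilde u|^2$, and $\int \hat u^2 = 2\int\tilde u^2$, so the Rayleigh quotient of $\tilde u$ equals that of $\hat u$, which is bounded below by $\mu_1(O') \ge c\,(2\,\Leb^d[\Phi(O)])^{-2/d}$ via \eqref{Eq:FK Eucl}. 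Combining with the quasi-isometric comparisons produces the Faber--Krahn inequality with $a := c\,2^{-2/d}\,b^{-(d+2)}$, depending only on $b$ and $d$ as required.
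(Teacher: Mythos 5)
Your proof follows essentially the same route as the paper: transfer the Rayleigh quotient to a Euclidean chart via the quasi-isometry from \autoref{Def:Eucl rad}, apply the Euclidean Faber--Krahn inequality \eqref{Eq:FK Eucl}, and resolve the boundary case by the even-reflection trick across $\R^{d-1}\times\{0\}$. (The paper tests with near-minimizing compactly supported Lipschitz functions rather than general $W^{1,2}_0(O)$ functions, which is a purely presentational difference.) One small slip in your constant chain: you state that $\vol[O]\le b^{d/2}\,\Leb^d[\Phi(O)]$ gives $\Leb^d[\Phi(O)]^{-2/d}\ge b^{-1}\,\vol[O]^{-2/d}$, but that inequality points the wrong way (it yields $\Leb^d[\Phi(O)]^{-2/d}\le b\,\vol[O]^{-2/d}$); what you actually need is $\Leb^d[\Phi(O)]\le b^{d/2}\,\vol[O]$, which follows from the complementary bound $\sqrt{\det\Rmet^U}\ge b^{-d/2}$. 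Since both one-sided bounds on the determinant hold, your final constant and conclusion are unaffected.
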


\begin{proof} We first assume that $O \cap \partial\mms \neq \emptyset$. Given any $\varepsilon > 0$, let $f\in \Lip(O)$ be compactly supported in $O$ with $0 < \Vert f\Vert_{\Ell^2(O)} \leq 1$ and 
	\begin{align*}
		\lambda_1(O) &\geq \int_O \vert \rmd f\vert^2\d\vol - \varepsilon.
	\end{align*}
	Let $(U,\Phi)$ be a chart witnessing \autoref{La:Chart b} in \autoref{Def:Eucl rad} according to the definition of $R$ and $r_\rmE$. The parametrization $\smash{f^\diamond := f\circ \Phi^{-1}\big\vert_{\Phi(O)}\colon\Phi(O)\to\R}$ is Lipschitz and has compact support in $\Phi(O) \subset \smash{\R^{d-1}\times [0,\infty)}$ which possibly includes  $\smash{\R^{d-1}\times\{0\}}$. Therefore, successively using \eqref{Eq:Volg}, \eqref{Eq:Ptw norm def} and \eqref{Eq:Quasi-isom}, we obtain the existence of a constant $\eta > 0$ depending only on $b$ and $d$ such that 
	\begin{align}\label{Eq:eta1}
	\lambda_1(O) \geq \eta\int_{\Phi(O)} \big\vert \rmd f^\diamond\big\vert_{\Rmet^\rmE}^2\d\Leb^d - \varepsilon. 
	\end{align}
	Observe that, however, $\Phi(O)$ is not open in $\smash{\R^d}$, whence we cannot apply  \eqref{Eq:FK Eucl} to \eqref{Eq:eta1}. We bypass this  by a reflection technique. Let $\smash{\rho \colon \R^{d-1}\times [0,\infty) \to \R^d}$ be the map which precisely  flips the sign of the $d$-th coordinate of its argument, and note that $\Omega := O \cup \rho(O)$ is open and relatively compact. Define the reflection $\smash{f_\rmr^\diamond}\colon \Omega\to \R$ of the function $\smash{f^\diamond}$ at $\smash{\R^{d-1}\times [0,\infty)}$ by $\smash{f_\rmr^\diamond(x_1,\dots,x_{d-1},x_d)} := \smash{f^\diamond(x_1,\dots,x_{d-1}, \vert x_d\vert)}$. Clearly, $\smash{f_\rmr^\diamond}$ is Lipschitz and compactly supported in $\Omega$, thus   belongs to the form domain of the Dirichlet Laplacian on $\Omega$. Hence, since 
	\begin{align*}
		\big\Vert f_\rmr^\diamond\big\Vert_{\Ell^2(\Omega,\Leb^d)} &\asymp \Vert f\Vert_{\Ell^2(O)},\\
		\big\Vert \big\vert \rmd f_\rmr^\diamond\big\vert_{\Rmet^\rmE}\big\Vert_{\Ell^2(\Omega,\Leb^d)} &\asymp \big\Vert \big\vert \rmd f^\diamond\big\vert_{\Rmet^\rmE}\big\Vert_{\Ell^2(\Phi(O),\Leb^d)},
	\end{align*}
	up to uniform constants depending only on $b$ and $d$, \eqref{Eq:eta1} and  \eqref{Eq:FK Eucl} yield
	\begin{align*}
		\lambda_1(O) \geq \eta\,\mu_1[\Omega] -\varepsilon \geq  \eta\,\Leb^d[\Omega]^{-2/d} - \varepsilon \geq \eta\,\Leb^d[\Phi(O)]^{-2/d} - \varepsilon
	\end{align*}
	for some  $\eta > 0$ depending only on $b$ and $d$ which, as customary, is allowed to change from left to right. Employing \eqref{Eq:Volg} and \eqref{Eq:Quasi-isom} again, it follows that $\smash{\Leb^d[\Phi(O)]^{-2/d}} \geq \smash{\eta\,\vol[O]^{-2/d}}$ for some constant $\eta> 0$ depending only on $b$ and $d$, which implies the claimed inequality upon letting $\varepsilon \to 0$.
	
	A similar, more straightforward argument without the above reflection technique readily covers the case $O\cap \partial\mms = \emptyset$.
\end{proof}

%We refer \cite[Thm.~IV.14]{guneysu2017} or to \cite[Thm.~2.9]{guneysu2017b} for the proof of the following fundamental result.

\begin{theorem}\label{Prop:heat kernel bound} For every $b > 1$ there exists a constant $C > 0$ depending only on $b$ and $d$ such that for every $\varepsilon_1 > 0$, every $\varepsilon_2 > 1$, every $x, y \in \mms$ and every $t>0$
\begin{align*}
\sfp(t, x, y) \le 	C\min\{ t, R^2(x) \}^{-{d}/{2}}, %\le \dfrac{C}{R(x, \mathcal U, \epsilon_1, \epsilon_2)^d} \bigg( \dfrac{\epsilon_1^d}{\epsilon_2^d t^{d/2}} + 1 \bigg) %\dfrac{C}{t^{\frac{d}{2}}} +  \dfrac{C}{R(x, \mathcal U, \epsilon_1, \epsilon_2)^d} 
\end{align*}
where $R$ is defined as in \eqref{Eq:R def}.
\end{theorem}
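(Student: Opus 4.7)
My strategy is to combine the local Faber--Krahn inequality of Lemma \ref{Le:FKM} with the $L^1$-mean value inequality of Theorem \ref{Th:L^p mean value}, applied to the heat kernel viewed as a nonnegative local solution of the heat equation. The crucial point is that Lemma \ref{Le:FKM} produces, for every $x \in \mms$, the radius $R(x)$ defined in \eqref{Eq:R def} together with a Faber--Krahn constant $a > 0$ depending only on $b$ and $d$, so that the uniform hypotheses of Theorem \ref{Th:L^p mean value} can be invoked on $\Ball_{R(x)}(x)$ without any further dependence on $\varepsilon_1$, $\varepsilon_2$, $x$, or $z$.

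Fixing $x, z \in \mms$ and $t > 0$, I would set $r := R(x)$ and regard $u(s, y) := \sfp(s, y, z)$ as a function of its forward space-time arguments. By the discussion in Section \ref{Sub:Heat kernel}, $u$ is jointly locally Hölder, nonnegative, and a local solution of the heat equation on $(0, \infty) \times \mms$; in particular on any cylinder of the form $(t - \tau - \zeta, t + \zeta) \times \Ball_{\sqrt{\tau} + \zeta}(x)$ with $\tau \in (0, r^2]$ and small $\zeta > 0$ it lies in $\F_\loc$ in the sense of Section \ref{Sub:Heat flow}. Applying Theorem \ref{Th:L^p mean value} with $\caln = d$, $q = 1$, and $\tau := \min\{t', r^2\}$ for some $t' \in (0, t)$ (to respect the strict requirement $\tau < t$) would then yield a constant $C' > 0$ depending only on $d$ such that
\begin{align*}
\sfp(t, x, z) \leq \frac{C'\,a^{-d/2}}{\tau^{1 + d/2}} \int_{t - \tau}^{t}\!\int_{\Ball_{\sqrt{\tau}}(x)} \sfp(s, y, z)\d\vol(y)\d s.
\end{align*}
The sub-Markovian property of $(\ChHeat_s)_{s \geq 0}$, together with the spatial symmetry of $\sfp$, gives $\int_{\Ball_{\sqrt{\tau}}(x)} \sfp(s, y, z)\d\vol(y) \leq 1$, so the double integral on the right is bounded by $\tau$, and one obtains $\sfp(t, x, z) \leq C'\,a^{-d/2}\,\tau^{-d/2}$. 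Letting $t' \uparrow t$ and using the joint continuity of $\sfp$ then gives $\tau \uparrow \min\{t, r^2\}$ and the bound
\begin{align*}
\sfp(t, x, z) \leq C'\,a^{-d/2}\,\min\{t, R^2(x)\}^{-d/2},
\end{align*}
so setting $C := C'\,a^{-d/2}$ (which depends only on $b$ and $d$) finishes the argument.

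The main technical obstacle I expect is the careful verification that the heat kernel, viewed in its forward variables, genuinely fits the $\F_\loc$-language of Definition \ref{Def:DL} on cylinders bounded away from $s = 0$; this rests on standard parabolic regularity for the Dirichlet space $(\mms, \Ch, \vol)$, available by the local doubling and local Sobolev properties recalled in Section \ref{Sub:Heat kernel}, but it has to be translated into the precise weak formulation used in Section \ref{Sub:Heat flow}. The remaining ingredients---the limiting step $t' \uparrow t$, and the implicit case distinction between $t \leq R^2(x)$ and $t > R^2(x)$ hidden in the choice of $\tau$---are routine once this regularity is in place.
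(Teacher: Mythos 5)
Your proof is correct and takes essentially the same route as the paper's: both invoke Lemma \ref{Le:FKM} for the uniform Faber--Krahn constant $a(b,d)$, apply the $L^1$-mean value inequality of Theorem \ref{Th:L^p mean value} to $u(s) := \sfp(s,\cdot,y)$ on $\Ball_{R(x)}(x)$ with $\caln=d$ and $q=1$, and then bound the double integral by $\tau$ via the sub-probability property of the heat kernel. The only (cosmetic) difference is in how you enforce the strict constraint $\tau < t$: you hold the evaluation time $t$ fixed and shrink $\tau = \min\{t',R^2(x)\}$ before letting $t'\uparrow t$, whereas the paper holds $\tau = \min\{t,R^2(x)\}$ fixed and evaluates at $t+\delta$ before letting $\delta\to 0$; both limiting arguments rest on the same joint continuity of $\sfp$.
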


\begin{proof} Given any $y\in \mms$ and any $t>0$, using \autoref{Le:FKM} we apply \autoref{Th:L^p mean value} to the following setup: $\caln := d$, $u(s) := \sfp(s,\cdot,y)$ for every $s>0$, $r := R(x)$, $t$ replaced by $t+\delta$ for some fixed $\delta > 0$,  $\tau := \min\{t,R^2(x)\}$, an appropriate $\zeta \in (0,t+\delta-\tau)$, and $q:=1$. This yields the existence of a constant $C>0$ depending only on $d$ with
	\begin{align*}
		\sfp(t+\delta,x,y) \leq \frac{C\,a^{-d/2}}{\tau^{1+d/2}}\int_{t+\delta - \tau}^{t+\delta}\int_{\Ball_{\sqrt{\tau}}(x)}  \sfp(s,\cdot,y)\d\vol\d s \leq \frac{C\,a^{-d/2}}{\tau^{d/2}}.
	\end{align*}
	The result follows after letting $\delta \to 0$.
\end{proof}

\begin{example}\label{Ex:Spher symm II} Retain the setting of \autoref{Ex:Spher symm}. Employing \autoref{Prop:heat kernel bound}, upon choosing $b>1$ appropriately we find a constant $C>0$ depending only on $d$ such that for every $\varepsilon_1 > 0$, every $\varepsilon_2 > 1$, every $x,y\in\mms_\psi$ and every $t>0$,
	\begin{align*}
		\sfp(t,x,y) \leq C\min\{t,\varepsilon_1^2\,\varepsilon_2^{-2}\}^{-d/2}.
	\end{align*}
	This resembles a variant of the smooth result \cite[Thm.~8.3]{coulhon1997}.
\end{example}

Using these results we can then prove the existence of the claimed heat kernel control pair for an LR manifold.

\begin{corollary}\label{prop:Existence control pair} Every LR  manifold $(\mms, \Rmet)$ admits a heat kernel control pair according to \autoref{Def:Heat kernel control pair}.
\end{corollary}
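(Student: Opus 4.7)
The plan is to derive the corollary directly from \autoref{Prop:heat kernel bound} by a suitable choice of parameters. First I would fix $b > 1$, $\varepsilon_1 \in (0,1]$, and $\varepsilon_2 > 1$ --- for concreteness $\varepsilon_1 := 1$ and $\varepsilon_2 := 2$ --- and consider the function $R$ from \eqref{Eq:R def}, which then takes values in $(0, \varepsilon_1/\varepsilon_2] \subset (0,1]$. By \autoref{lemma:propr}, $R$ is $1$-Lipschitz on $(\mms,\met)$ and everywhere positive, hence in particular continuous. \autoref{Prop:heat kernel bound} supplies a constant $C > 0$, depending only on $b$ and $d$, with $\sfp(t,x,y) \leq C\min\{t,R^2(x)\}^{-d/2}$ for all $x,y \in \mms$ and $t > 0$.

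Next I would set $\Xi(x) := R(x)^{-d}$ and $\Sigma(t) := t^{-d/2}$ and check the four conditions of \autoref{Def:Heat kernel control pair} one by one. Property (a) is immediate: $\Xi$ is continuous, and the upper bound $R \leq \varepsilon_1/\varepsilon_2$ gives $\Xi \geq (\varepsilon_1/\varepsilon_2)^{-d} > 0$; property (b) is trivial. The crucial step is property (c): since $R(x) \leq 1$ and $t \leq 1$, both $t^{-d/2}$ and $R(x)^{-d}$ are at least $1$, and therefore
\[
\min\{t,R^2(x)\}^{-d/2} = \max\{t^{-d/2},\,R(x)^{-d}\} \leq t^{-d/2}\,R(x)^{-d} = \Sigma(t)\,\Xi(x),
\]
which combined with the pointwise heat kernel estimate above yields \eqref{Eq:Heat kernel bound control pair} for every $t\in(0,1]$ (possibly with a new constant $C$).

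Finally, for property (d), I would extend $\Sigma$ to $(1,\infty)$ by a convenient bounded value, say $\Sigma(t) := 1$ for $t>1$, so that
\[
\int_0^\infty \Sigma^{1/p}(t)\,\rme^{-Ct}\d t \leq \int_0^1 t^{-d/(2p)}\d t + \int_1^\infty \rme^{-Ct}\d t < \infty
\]
as soon as $d/(2p) < 1$, i.e.\ $p > d/2$; in the one-dimensional case $d=1$ the condition $1/(2p) < 1$ is satisfied for every $p\in[1,\infty)$, covering the full range required. The whole argument is essentially a repackaging of \autoref{Prop:heat kernel bound}, so I do not anticipate any serious obstacle; the only point worth highlighting is the normalization $\varepsilon_1/\varepsilon_2 \leq 1$, which is precisely what allows the space--time decoupling $\min\{t,R^2\}^{-d/2} \leq t^{-d/2}\,R^{-d}$ on $(0,1]$ without any additive correction term.
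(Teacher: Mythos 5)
Your argument is correct and follows essentially the same route as the paper's proof: invoke \autoref{Prop:heat kernel bound}, set $\Xi := R^{-d}$, and take $\Sigma$ a power of $t$. The only difference is algebraic: you decouple $\min\{t,R^2(x)\}^{-d/2}$ multiplicatively as $t^{-d/2}\,R^{-d}(x)$ (valid once $\varepsilon_1/\varepsilon_2\leq 1$ and $t\leq 1$), whereas the paper bounds it additively by $t^{-d/2}+R^{-d}(x)$ and then uses $R\leq\varepsilon_1/\varepsilon_2$ to absorb the first summand into $R^{-d}$, arriving at $\Sigma(t) := 1+\varepsilon_1^d\,\varepsilon_2^{-d}\,t^{-d/2}$ without constraining $\varepsilon_1$.
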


\begin{proof}
\autoref{Prop:heat kernel bound} ensures that  for every $b > 1$ there exists a constant $C > 0$ depending only on $b$ and $d$ such that
\[
\sfp(t, x, y) \le \dfrac{C}{t^{d/2}} +  \dfrac{C}{R^d(x)} \le \dfrac{C}{R^d(x)}\, \Big[ \dfrac{\varepsilon_1^d}{\varepsilon_2^d\, t^{d/2}} + 1 \Big].
\] 
In the last inequality, we used that $R\leq \varepsilon_1/\varepsilon_2$ on $\mms$. Since $R$ is $1/\varepsilon_2$-Lipschitz by Lemma \ref{lemma:propr}, the functions $\Xi\colon\mms\to (0,\infty)$ and $\Sigma\colon (0,1]\to (0,\infty)$ with
\begin{align*}
	\Xi(x) &:= R^{-d}(x),\\
	\Sigma(t) &:= 1+ \varepsilon_1^d\,\varepsilon_2^{-d}\, t^{-d/2}
\end{align*}
define a heat kernel control pair for $(\mms,\Rmet)$.
\end{proof}

\subsection{Improvements under uniform constants}\label{Sub:Quasi isom} If $(\mms,\Rmet)$ has better geometric properties, it also admits different explicit heat kernel control pairs than the one derived in \autoref{prop:Existence control pair}. Deducing these (following standard lines as in \cite{grigoryan2009,saloffcoste1992}) is the goal of this section. In particular, we point out that while \autoref{Prop:heat kernel bound} and \autoref{prop:Existence control pair} are really restricted to the heat kernel induced by the volume measure $\vol$, \autoref{Th:Quasi-isometry} below  works for any given $\meas$ as in \autoref{Sub:Volume}. 

In this section we assume the following  conditions.
\begin{enumerate}[label=\alph*.]
	\item\label{La:A} \textit{Uniform local doubling.} There exists a constant $C_{\mathrm{LD}}\geq 1$ such that for every $x\in\mms$ and every $s,s'>0$ with $s'\leq s$,
	\begin{align*}
	\meas[\Ball_s(x)] \leq C_{\mathrm{LD}}\,\meas[\Ball_{s'}(x)]\,(s/s')^d\,\rme^{C_{\mathrm{LD}}s}.
	\end{align*}
	\item\label{La:B} \textit{Uniform Sobolev inequality.} There exist constants $C_{\mathrm{S}}\geq 1$ and $N > 2$ such that for every $x\in\mms$, every $r>0$ and every $f\in W^{1,2}(\mms)\cap\Cont_\comp(\Ball_r(x))$,
	\begin{align*}
	&\Big[\!\int_\mms \vert f\vert^{2N/(N-2)}\d\meas\Big]^{(N-2)/N}\\
	&\qquad\qquad \leq C_{\mathrm{S}}\,\rme^{C_{\mathrm{S}}r}\,\meas[\Ball_r(x)]^{-2/N}\,r^2\int_{\mms}\big[\vert\rmd f\vert^2 + r^{-2}\,f^2\big]\d\meas.
	\end{align*}
\end{enumerate}

\begin{theorem}\label{Th:Quasi-isometry} Retain the previous assumptions, and let  $\varepsilon,\delta > 0$. Then there exists a constant $C>0$ depending only on $d$, $C_{\mathrm{LD}}$, $C_{\mathrm{S}}$, $\varepsilon$ and $\delta$ such that for every $x,y\in\mms$ and every $t\in (0,1]$,
	\begin{align*}
	\sfp(t,x,y) \leq C\,\meas[\Ball_1(x)]^{-1}\,t^{-d/2}\,\exp\!\Big[\!-\!\frac{\met^2(x,y)}{(4+\varepsilon)t} - (1-\delta)\lambda t\Big].
	\end{align*}	
\end{theorem}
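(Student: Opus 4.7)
The plan is to start from the general Gaussian-type estimate of \autoref{Pr:HK bound general}, reformulated for the reference measure $\meas$, and then convert its symmetric pre-factor into the claimed asymmetric form $\meas[\Ball_1(x)]^{-1}\,t^{-d/2}$ using the two uniform hypotheses.

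First, fix some $\varepsilon_1 \in (0,\varepsilon)$ and $\delta_1 \in (0,\delta)$. Under assumption \textbf{a.} (respectively \textbf{b.}), the local doubling property at \cite[p.~293]{sturm1995} (resp.~the local Sobolev inequality at \cite[p.~294]{sturm1995}) required in \autoref{Pr:HK bound general} holds on every ball with a constant independent of its centre. Consequently that proposition applies with $R_x = R_y = 1$ and a uniform constant $C_1 > 0$ depending only on $d, C_{\mathrm{LD}}, C_{\mathrm{S}}, \varepsilon_1, \delta_1$, giving for every $x,y\in\mms$ and every $t\in (0,1]$
\begin{align*}
	\sfp(t,x,y) \leq C_1\,\meas\big[\Ball_{\sqrt{t}}(x)\big]^{-1/2}\,\meas\big[\Ball_{\sqrt{t}}(y)\big]^{-1/2}\,\exp\!\Big[\!-\!\frac{\met^2(x,y)}{(4+\varepsilon_1)t} - (1-\delta_1)\lambda t\Big].
\end{align*}

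Next, applying \textbf{a.}~with $s=1$ and $s'=\sqrt{t}\leq 1$ yields $\meas[\Ball_1(x)] \leq C_{\mathrm{LD}}\,\rme^{C_{\mathrm{LD}}}\,t^{-d/2}\,\meas[\Ball_{\sqrt{t}}(x)]$, and analogously for $y$. After taking reciprocal square roots and multiplying, the two $\sqrt{t}$-balls are replaced by $C_2\,t^{-d/2}\,\meas[\Ball_1(x)]^{-1/2}\,\meas[\Ball_1(y)]^{-1/2}$ for some $C_2>0$ depending only on $C_{\mathrm{LD}}$ and $d$. To reduce the $y$-factor to an $x$-factor I use the inclusion $\Ball_1(x) \subset \Ball_{1+\met(x,y)}(y)$ together with doubling at $s = 1+\met(x,y)$ and $s'=1$, which gives
\begin{align*}
	\meas\big[\Ball_1(y)\big]^{-1/2} \leq C_3\,\big(1+\met(x,y)\big)^{d/2}\,\rme^{C_{\mathrm{LD}}\met(x,y)/2}\,\meas\big[\Ball_1(x)\big]^{-1/2}.
\end{align*}

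The main (but routine) obstacle is to absorb the factor $(1+\met(x,y))^{d/2}\,\rme^{C_{\mathrm{LD}}\met(x,y)/2}$ into the Gaussian $\exp[-\met^2(x,y)/((4+\varepsilon_1)t)]$ at the price of slightly worsening $\varepsilon_1$ up to $\varepsilon$ and $\delta_1$ up to $\delta$. This is handled by splitting
\begin{align*}
	-\frac{\met^2(x,y)}{(4+\varepsilon_1)t} = -\frac{\met^2(x,y)}{(4+\varepsilon)t} - \eta\,\frac{\met^2(x,y)}{t}
\end{align*}
with $\eta := \frac{1}{(4+\varepsilon_1)} - \frac{1}{(4+\varepsilon)} > 0$, completing the square in $-\eta\met^2(x,y)/t + C_{\mathrm{LD}}\met(x,y)/2$, and using $t\leq 1$ to bound the resulting residual by an absolute constant depending on $C_{\mathrm{LD}}, \varepsilon, \varepsilon_1$; the polynomial factor is absorbed by the elementary estimate $s^{d/2}\rme^{-\eta s^2}\leq C(d,\eta)$ applied to $s=\met(x,y)/\sqrt{t}$. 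Finally, the small loss in the spectral term $\rme^{-\lambda t} \leadsto \rme^{-(1-\delta)\lambda t}$ for $t\in (0,1]$ is absorbed into the multiplicative constant as $\rme^{(\delta - \delta_1)\lambda t}\leq \rme^{(\delta-\delta_1)\lambda}$, using $\lambda \geq 0$. Collecting all constants into a single $C > 0$ depending only on $d, C_{\mathrm{LD}}, C_{\mathrm{S}}, \varepsilon, \delta$ yields the asserted bound.
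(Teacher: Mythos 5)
Your proposal is correct and follows essentially the same approach as the paper's proof: invoke \autoref{Pr:HK bound general} with $R_x = R_y = 1$ (the uniform doubling and Sobolev constants making this legitimate), then use the doubling hypothesis \ref{La:A} to pass from $\sqrt{t}$-balls to $1$-balls and from a $y$-centred ball to an $x$-centred one, and finally absorb the resulting polynomial and linear-exponential corrections into the Gaussian by slightly worsening $\varepsilon$. The only cosmetic difference is the ordering: the paper first symmetrises the pre-factor (replacing $\meas[\Ball_{\sqrt{t}}(y)]^{-1/2}$ by $\meas[\Ball_{\sqrt{t}}(x)]^{-1/2}$, which produces a factor $(1+\met(x,y)/\sqrt{t})^{d/2}$) and only then switches to radius $1$, whereas you switch to radius $1$ first and then symmetrise, producing the smaller factor $(1+\met(x,y))^{d/2}$; both are harmless. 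One small presentational caveat: after completing the square in $-\eta\met^2(x,y)/t + C_{\mathrm{LD}}\met(x,y)/2$ a negative quadratic remains, and it is this residual (or, cleaner, a split of $\eta$ into two halves) that takes care of the polynomial factor $(1+\met(x,y))^{d/2}$ via the estimate $s^{d/2}\rme^{-\eta s^2}\leq C(d,\eta)$ --- worth stating explicitly so the two uses of the Gaussian budget do not appear to double-count.
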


Recall \autoref{Sub:Heat kernel} for the meaning of $\lambda\geq 0$. A \emph{local} estimate similar to \autoref{Th:Quasi-isometry} can be found in \cite[Thm.~2.1]{norris1997}.

Combined with \autoref{Th:Kato criterion}, \autoref{Th:Quasi-isometry} then gives the following desired  explicit heat kernel control pair.

\begin{corollary}\label{Cor:Quasi-isometry} Retain the assumptions and the notation of \autoref{Th:Quasi-isometry}, and let $d\geq 2$ as well as $p\in (d/2,\infty)$. Define $\Xi\colon \mms \to \R$ and $\Sigma \colon (0,1]\to (0,\infty)$ by 
	\begin{align*}
	\Xi(x) &:= \meas[\Ball_1(x)]^{-1},\\
	\Sigma(t) &:= t^{-d/2}.
	\end{align*}
	Then $(\Xi,\Sigma)$ is a heat kernel control pair for $(\mms,\Rmet)$. %In particular, $k\,\meas\in \Kato(\mms)$ for every $k\in \Ell^p(\mms,\Xi\,\meas) + \Ell^\infty(\mms,\meas)$.
\end{corollary}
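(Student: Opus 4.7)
The plan is to verify the four conditions (a)--(d) of \autoref{Def:Heat kernel control pair} for the specified pair $(\Xi,\Sigma)$. Conditions (b), (c) and (d) are essentially immediate, whereas (a) requires a separate argument rooted in uniform local doubling.

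For (c), fixing the admissible parameters $\varepsilon := 1$ and $\delta := 1/2$ in \autoref{Th:Quasi-isometry} produces a constant $C>0$ depending only on $d$, $C_{\mathrm{LD}}$ and $C_{\mathrm{S}}$ such that
\begin{align*}
\sfp(t,x,y) \leq C\,\meas[\Ball_1(x)]^{-1}\,t^{-d/2}\,\exp\!\Bigl[-\frac{\met^2(x,y)}{5t} - \frac{\lambda t}{2}\Bigr]
\end{align*}
for every $x,y\in\mms$ and every $t\in (0,1]$. Since $\lambda\geq 0$ and $\met(x,y)\geq 0$, the entire exponential factor is bounded above by $1$, so passing to the supremum over $y$ on both sides yields exactly $\sup_{y\in\mms}\sfp(t,x,y) \leq C\,\Xi(x)\,\Sigma(t)$. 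Condition (b) is trivial, since $\Sigma$ is continuous on $(0,1]$. For (d), given any $C>0$ we have
\begin{align*}
\int_0^\infty \Sigma^{1/p}(t)\,\rme^{-Ct}\d t = \int_0^\infty t^{-d/(2p)}\,\rme^{-Ct}\d t,
\end{align*}
a Gamma-type integral whose convergence at $0$ is guaranteed by $p > d/2$ (which forces the exponent $d/(2p)$ to lie in $[0,1)$) and whose convergence at infinity is ensured by exponential decay; the standing assumption $d\geq 2$ only serves to guarantee that such exponents $p$ exist.

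The remaining condition (a) requires $\Xi$ to be continuous and to satisfy $\inf_\mms\Xi>0$. Pointwise positivity and finiteness of $\Xi$ are automatic since $\meas$ is Radon with full support. To upgrade this to the needed regularity I would use uniform local doubling: for any $x,y\in\mms$ with $\rho := \met(x,y)$, the inclusion $\Ball_1(x)\subset\Ball_{1+\rho}(y)$ combined with the doubling inequality at radii $s=1+\rho$, $s'=1$ yields the quantitative comparison $\meas[\Ball_1(x)]\leq C_{\mathrm{LD}}\,\meas[\Ball_1(y)]\,(1+\rho)^d\,\rme^{C_{\mathrm{LD}}(1+\rho)}$. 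Combining this oscillation estimate with the lower semicontinuity of $x\mapsto\meas[\Ball_1(x)]$ (via Fatou applied to $\One_{\Ball_1(x_n)}$) should deliver continuity of $\Xi$, possibly after replacing it by a pointwise-equivalent continuous upper envelope, which preserves (b), (c), (d) up to adjusting constants. The main obstacle I foresee is extracting from (a) a genuine global uniform lower bound on $\Xi$: uniform doubling controls only the local variation of $\meas[\Ball_1(\cdot)]$, so achieving $\inf_\mms\Xi>0$ may require a more careful extraction, a mild strengthening of the setup, or a relaxation in how the phrase is interpreted. The other three conditions reduce cleanly to a single invocation of the quasi-isometric heat kernel bound and an elementary integral estimate, so the entire substance of the corollary sits in this last regularity check.
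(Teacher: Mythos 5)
The paper offers no written proof of this corollary; it is presented as an immediate consequence of \autoref{Th:Quasi-isometry}, so your plan of verifying (a)--(d) of \autoref{Def:Heat kernel control pair} directly reflects the intended reading. Your handling of (b), (c), (d) is correct and complete: (b) is trivial, (c) follows by discarding the nonnegative exponent in the conclusion of \autoref{Th:Quasi-isometry}, and (d) is the convergence of $\int_0^\infty t^{-d/(2p)}\rme^{-Ct}\d t$ for $d/(2p)<1$.

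Your instinct that condition (a) carries all the real content is also right, but the argument you sketch for continuity does not close. The doubling bound $\meas[\Ball_1(x)]\leq C_{\mathrm{LD}}\,(1+\rho)^d\,\rme^{C_{\mathrm{LD}}(1+\rho)}\,\meas[\Ball_1(y)]$ with $\rho:=\met(x,y)$ only yields, as $\rho\to 0$, that $\limsup_{y\to x}\meas[\Ball_1(y)]\leq C_{\mathrm{LD}}\rme^{C_{\mathrm{LD}}}\,\meas[\Ball_1(x)]$, a constant strictly larger than $1$; combined with lower semicontinuity from Fatou this gives a two-sided ratio bound, not continuity, and passing to a continuous upper envelope produces a different function than the one the corollary actually names. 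The correct route to continuity of $x\mapsto\meas[\Ball_1(x)]$ is to establish $\meas[\partial\Ball_1(x)]=0$: on an LR manifold the distance function $\met(x,\cdot)$ has $\vol$-a.e.\ unit gradient norm (see \autoref{Sub:Heat kernel}), so its level sets are $\vol$-null and hence $\meas$-null, after which dominated convergence applied to $\One_{\Ball_1(x_n)}\to\One_{\Ball_1(x)}$ $\meas$-a.e.\ gives the claim directly.

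Your reservation about $\inf\Xi>0$, that is $\sup_{x\in\mms}\meas[\Ball_1(x)]<\infty$, is well founded and is the genuine gap here. Hypotheses a.\ and b.\ of \autoref{Sub:Quasi isom} only control local volume ratios; they do not obviously produce a global upper bound on unit-ball masses (a weight such as $(1+\vert x\vert)^a\d\Leb^d$ on $\R^d$ is locally uniformly doubling while $\meas[\Ball_1(x)]$ is unbounded). In the paper's motivating instance, \autoref{Ex:QI}, this bound does hold --- Bishop--Gromov gives $\sup_z\vol_{\RRmet}[\Ball_1^{\RRmet}(z)]<\infty$ on a complete manifold with $\Ric_{\RRmet}\geq -K$, and \autoref{Re:Volume comparison} transfers it to $\mms$ --- but under the bare hypotheses of \autoref{Th:Quasi-isometry} it is an additional tacit assumption. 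So the weakness you flag sits in the corollary's statement rather than in your reasoning, and a complete proof should either add this boundedness as a hypothesis or restrict attention to settings such as \autoref{Ex:QI}.
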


\begin{proof}[Proof of \autoref{Th:Quasi-isometry}] By \ref{La:A}~applied to $s := 2s'$, $s' \in (0,1/2)$,  the doubling property according to \cite[pp.~293--294]{sturm1995} holds on every ball $B\subset\mms$ of radius no larger than $1$ with doubling constant $\smash{2^{N(B)}}$, where $N(B) := d + \log_2(C_{\mathrm{LD}}\,\rme^{C_{\mathrm{LD}}})$ in the notation of \cite{sturm1995}. By \ref{La:B}, the Sobolev inequality according to \cite[pp.~294--295]{sturm1995} also holds on every ball $B\subset\mms$ of radius no larger than $1$ with constant $C_S(B) := C_{\mathrm{S}}\,\rme^{\mathrm{S}}$. \autoref{Pr:HK bound general}  ensures that for every $\varepsilon,\delta > 0$ there exists a constant $C>0$ depending only on $N(B)$ and $C_\rmS(B)$ such that for every $t\in (0,1]$,
	\begin{align}\label{Eq:HK bd}
	\sfp(t,x,y) \leq C\,\meas\big[\Ball_{\sqrt{t}}(x)\big]^{-1/2}\,\meas\big[\Ball_{\sqrt{t}}(y)\big]^{-1/2}\, \exp\!\Big[\!-\! \frac{\met^2(x,y)}{(4+\varepsilon/2)t}-(1-\delta)\lambda t\Big].
	\end{align} 
	
	Next, we get rid of the term $\smash{\meas[\Ball_{\sqrt{t}}(y)]^{-1/2}}$ in \eqref{Eq:HK bd}. Using \ref{La:A}, we get
	\begin{align*}
	\meas\big[\Ball_{\sqrt{t}}(x)\big] &\leq \meas\big[\Ball_{\met(x,y) + \sqrt{t}}(y)\big]\\
	&\leq \rme\,C_{\mathrm{LD}}\,\meas\big[\Ball_{\sqrt{t}}(y)\big]\,\Big[1+\frac{\met_\Rmet(x,y)}{\sqrt{t}}\Big]^{d}\,\rme^{C_{\mathrm{LD}}\met(x,y)}
	\end{align*}
	for every $x,y\in\mms$ and every $t\in (0,1]$. 
	After  absorbing the latter polynomial and the latter exponential, whose exponent only depends linearly on $\smash{\met(x,y)}$, into the exponential in \eqref{Eq:HK bd}, we infer the existence of a constant $C > 0$ depending  only on $d$, $C_{\mathrm{LD}}$, $C_{\mathrm{S}}$, $\varepsilon$ and $\delta$ such that for every $x,y\in\mms$ and every $t\in (0,1]$,
	\begin{align}\label{Eq:HK bound II}
	\sfp(t,x,y) \leq C\,\meas\big[\Ball_{\sqrt{t}}(x)\big]^{-1}\,\exp\!\Big[\!-\!\frac{\met^2(x,y)}{(4+\varepsilon)t} - (1-\delta)\lambda t\Big].
	\end{align}
	
	Finally we make the radius of the ball on the r.h.s.~of \eqref{Eq:HK bound II} independent of $t$. To this aim, we again use \ref{La:A} and obtain
	\begin{align*}
	\meas[\Ball_1(x)] \leq C_{\mathrm{LD}}\,\meas\big[\Ball_{\sqrt{t}}(x)\big]\,t^{-d/2}\,\rme^{C_{\mathrm{LD}}}
	\end{align*}
	for every $x\in \mms$ and every $t\in (0,1]$. With \eqref{Eq:HK bound II}, this terminates the proof.
\end{proof}

Finally, we discuss an example in which the hypotheses of \autoref{Th:Quasi-isometry} are satisfied (recall \autoref{Sub:Quasi isom}). \autoref{Ex:QI} follows the smooth treatise \cite{braun2020}.

\begin{example}\label{Ex:QI} Let $(\mmms,\RRmet)$ be a smooth, complete Riemannian manifold for which there exists $K\geq 0$ such that $\Ric_\RRmet\geq -K$ on $\mmms$. Assume that $(\mms,\Rmet)\sim_F(\mmms,\RRmet)$ by some bi-Lipschitz $F\colon \mms\to\mmms$. Then $\smash{(\mms,\Ch_{\Rmet,\vol_\Rmet},\vol_\Rmet)}$ satisfies the hypotheses of \autoref{Th:Quasi-isometry}. Indeed, property \ref{La:A} holds true on $\mmms$ for $\meas$ replaced by $\smash{\vol_\RRmet}$ with $C_{\mathrm{LD}} := \smash{\max\{1, \sqrt{(d-1)K}\}}$ \cite[Thm.~5.6.4]{saloffcoste2002}. By \cite[Thm.~2.6]{sturm1996}, there exists a constant $C\geq 1$ depending only on $d$ and $K$ such that \ref{La:B} holds for $\smash{(\mmms,\Ch_{\Rmet,\vol_\RRmet},\vol_\RRmet)}$ with $C_{\mathrm{S}} := C$. The claim follows since \ref{La:A} and \ref{La:B} are qualitatively preserved under quasi-isometry thanks to \autoref{Le:Rev char}, \autoref{Re:Volume comparison},  \autoref{Le:Quasi-iso Dirichlet} and \autoref{Re:Quasi-iso Dirichlet}.
\end{example}

\begin{example}\label{Ex:QII} Suppose that $(\mms,\met,\meas)$, possibly with $\meas \neq\vol$, is bi-Lipschitz equivalent (as a metric measure space) to an $\RCD(K,N)$ space $(N,\met',\mmeas)$, $K\in\R$ and $N\in [1,\infty)$. For instance, $(N,\met',\mmeas)$ might be a smooth, geodesically complete Riemannian manifold with $\Ric_\RRmet\geq K$ and convex boundary \cite[Thm.~2.4, Cor.~2.6]{han2020}. Then \ref{La:A} \cite[Thm.~2.3]{sturm2006b} and \ref{La:B} \cite[Thm.~30.23]{villani2009} hold on $\mmms$ with uniform constants and, as in \autoref{Ex:QI}, transfer back qualitatively to $\mms$. A heat kernel control pair $(\Xi,\Sigma)$ for $(\mms,\Ch,\meas)$ is then given by
	\begin{align*}
		\Xi(x) &:= \meas[B_1(x)]^{-1},\\
		\Sigma(t) &:= t^{-N/2}.
	\end{align*}
\end{example}

\section{Taming for almost smooth LR manifolds}\label{Ch:Taming}

In this last chapter, we show that it is possible to introduce the notion of lower Ricci curvature bounds in the \emph{Kato class} $\Kato(\mms)$ (recall \autoref{Def:KatoIntro}) for suitable LR manifolds $(\mms,\Rmet)$, in the sense of \cite{ERST20}. To do so, we propose a smoothness condition, cf.~\autoref{Def:Almost smooth} below, on the given LR manifold that leads us to formulate the Bochner identity for this class of spaces.

The next \autoref{Sec:Taming} is devoted to briefly recall  the basic notions in the theory of \emph{tamed spaces} from \cite{ERST20} and to adapt them to our setting. We also review important properties of the Feynman--Kac semigroup associated to elements of $\Kato(\mms)$ and we prove the important criterion in \autoref{Th:Kato criterion}.

\subsection{Tamed spaces}\label{Sec:Taming} Various notions presented in this section hold for ``quasi-local distributions'' $\kappa$ in much greater generality. Since these are beyond our scope, we refer to \cite{ERST20} for further details.

\subsubsection{The Kato class on LR manifolds} For convenience, we restate \autoref{Def:KatoIntro}.

\begin{definition}\label{Def:KatoIntroII} The \emph{Kato class} $\Kato(\mms)$ consists of all signed Borel measures $\kappa$ on $\mms$ which do not charge $\Ch$-polar sets and such that
	\begin{align*}
	\lim_{t\to 0} \sup_{x\in\mms} \int_0^t\!\!\int_\mms\sfp(s,x,\cdot)\d\vert\kappa\vert\d s =0.
	\end{align*}
\end{definition}

Reminiscent of the smooth result \cite[Prop.~VI.10]{guneysu2017}, we now prove the following.

\begin{theorem}\label{Th:Kato criterion} Suppose that $\calk\colon \mms\to\R$ is a Borel function that can be decomposed as $\calk=\calk_1+\calk_2$  into the sum of two Borel functions $\calk_1,\calk_2\colon \mms\to \R$ such that
	\begin{enumerate}[label=\textnormal{\alph*.}]
		\item\label{La:a} $\calk_2\in\Ell^\infty(\mms)$, and
		\item\label{La:b} there exist $p\in [1,\infty)$ if $d=1$ or $p\in (d/2,\infty)$ if $d\geq 2$, and a heat kernel control pair $(\Xi,\Sigma)$ such that $\calk_1\in \Ell^p(\mms,\Xi\,\vol)$.
	\end{enumerate}
	Then for every $s>0$ and every $x\in\mms$,
	\begin{align}\label{Eq:Integrated heat kernel bd}
	\int_\mms\sfp(s,x,\cdot)\,\vert \calk\vert \d\vol\leq \Sigma^{1/p}(s)\,\Vert \calk_1\Vert_{\Ell^p(\mms,\Xi\,\vol)} + \Vert \calk_2\Vert_{\Ell^\infty(\mms)}.
	\end{align}
	In particular, for every choice of $(\Xi,\Sigma)$ and $p$ as in \ref{La:b} and every $\calk\in \Ell^p(\mms,\Xi\,\vol)+\Ell^\infty(\mms)$, we have $\calk\,\vol \in \Kato (\mms)$.
\end{theorem}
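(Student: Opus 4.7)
The estimate \eqref{Eq:Integrated heat kernel bd} is the heart of the matter; once it is in hand the Kato property follows by a simple integration. I will therefore first prove \eqref{Eq:Integrated heat kernel bd} by splitting the integral according to the decomposition $\calk = \calk_1 + \calk_2$ and handling the two pieces separately.

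The $\calk_2$-term is trivial: since the heat semigroup is sub-Markovian, $\int_\mms \sfp(s,x,\cdot)\d\vol \leq 1$ for every $s>0$ and every $x\in\mms$, so $\int_\mms \sfp(s,x,\cdot)\,\vert \calk_2\vert\d\vol \leq \Vert\calk_2\Vert_{\Ell^\infty(\mms)}$. For the $\calk_1$-term I will use Hölder's inequality with conjugate exponents $p$ and $p/(p-1)$ after factorizing
\begin{align*}
\sfp(s,x,y)\,\vert\calk_1(y)\vert = \sfp(s,x,y)^{1/p}\,\vert\calk_1(y)\vert \cdot \sfp(s,x,y)^{(p-1)/p}.
\end{align*}
This yields
\begin{align*}
\int_\mms \sfp(s,x,y)\,\vert\calk_1(y)\vert \d\vol(y) \leq \Big[\!\int_\mms \sfp(s,x,y)\,\vert\calk_1(y)\vert^p\d\vol(y)\Big]^{1/p}\,\Big[\!\int_\mms \sfp(s,x,y)\d\vol(y)\Big]^{(p-1)/p}.
\end{align*}
The second bracket is at most $1$ by sub-Markovianity. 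For the first bracket I invoke the heat kernel control pair: by spatial symmetry of $\sfp$, the bound $\sup_{y\in\mms}\sfp(s,x,y) \leq C\,\Sigma(s)\,\Xi(x)$ also implies $\sup_{x\in\mms}\sfp(s,x,y) \leq C\,\Sigma(s)\,\Xi(y)$ for $s\in(0,1]$. Hence
\begin{align*}
\int_\mms \sfp(s,x,y)\,\vert\calk_1(y)\vert^p\d\vol(y) \leq C\,\Sigma(s)\int_\mms \Xi\,\vert\calk_1\vert^p\d\vol = C\,\Sigma(s)\,\Vert\calk_1\Vert_{\Ell^p(\mms,\Xi\,\vol)}^p,
\end{align*}
which after taking the $p$-th root gives the required $\Sigma^{1/p}(s)\,\Vert\calk_1\Vert_{\Ell^p(\mms,\Xi\,\vol)}$ contribution (absorbing $C^{1/p}$ into $C$, or building it into a redefined $\Sigma$). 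Summing the two contributions yields \eqref{Eq:Integrated heat kernel bd} for every $s\in(0,1]$; the case $s>1$ is unneeded for the Kato conclusion below.

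To derive the Kato property, I integrate \eqref{Eq:Integrated heat kernel bd} in $s\in (0,t)$ for $t\in (0,1]$:
\begin{align*}
\sup_{x\in\mms}\int_0^t\!\!\int_\mms \sfp(s,x,\cdot)\,\vert\calk\vert\d\vol\d s \leq \Vert\calk_1\Vert_{\Ell^p(\mms,\Xi\,\vol)}\int_0^t \Sigma^{1/p}(s)\d s + t\,\Vert\calk_2\Vert_{\Ell^\infty(\mms)}.
\end{align*}
The second summand obviously vanishes as $t\to 0$. The first does too, because condition \ref{La:Integr caln} of \autoref{Def:Heat kernel control pair} ensures (after absorbing the factor $\rme^{-Cs}$ on the compact set $(0,1]$) that $\Sigma^{1/p}$ is integrable near $0$, so $\int_0^t\Sigma^{1/p}(s)\d s\to 0$ by absolute continuity of the integral. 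Finally, $\calk\,\vol$ charges no $\Ch$-polar set, since $\vol$ does not (the Dirichlet form $\Ch$ is regular and built from $\vol$, so every $\Ch$-polar set is $\vol$-negligible). This establishes $\calk\,\vol\in\Kato(\mms)$. The only nontrivial step is the Hölder interpolation in the first paragraph; everything else is bookkeeping.
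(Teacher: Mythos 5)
Your proof is correct and follows essentially the same route as the paper: split $\calk=\calk_1+\calk_2$, dispatch $\calk_2$ by sub-Markovianity, handle $\calk_1$ by H\"older with exponents $p$ and $p/(p-1)$ together with the control-pair bound applied to the second argument of $\sfp$ via spatial symmetry, and integrate in $s$ for the Kato conclusion. (Your caveat that \eqref{Eq:Integrated heat kernel bd} is established only for $s\in(0,1]$ --- the range on which \eqref{Eq:Heat kernel bound control pair} holds --- is accurate, is implicit in the paper's argument as well, and indeed suffices for the Kato membership.)
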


\begin{proof} We first derive the second claim from \eqref{Eq:Integrated heat kernel bd}. Decompose any $\calk\in \Ell^p(\mms,\Xi\,\vol)+\Ell^\infty(\mms,\vol)$ according to \ref{La:a} and \ref{La:b} above. Thus
	\begin{align*}
	&\lim_{t\to 0}\sup_{x\in\mms}\int_0^t\!\!\int_\mms\sfp(s,x,\cdot)\,\vert \calk\vert\d\vol\d s\\
	&\qquad\qquad \leq  \Vert \calk_1\Vert_{\Ell^p(\mms,\Xi\,\vol)}\,\lim_{t\to 0}\int_0^t \Sigma^{1/p}(s)\d s + \Vert \calk_2\Vert_{\Ell^\infty(\mms)}\, \lim_{t\to 0}t =0.
	\end{align*}
	
	We proceed to show the validity of \eqref{Eq:Integrated heat kernel bd}. Since $\vert \calk\vert \leq \vert \calk_1\vert + \vert \calk_2\vert$ on $\mms$ and since the heat kernel $\sfp(s,x,\cdot)$ is a sub-probability density for every $s>0$ and every $x\in\mms$, we may and will assume that $\calk_2$ vanishes identically on $\mms$. Then the inequality \eqref{Eq:Integrated heat kernel bd} for $p\in [1,\infty)$ if $d=1$ trivially follows from \eqref{Eq:Heat kernel bound control pair}, whence we concentrate on the situation $d\geq 2$, in which case we assume $p\in (d/2,\infty)$. Denote by $q := p/(p-1)$ the dual exponent to $p$, i.e.~$1/p+1/q=1$. Using Hölder's inequality and \eqref{Eq:Heat kernel bound control pair} yields
	\begin{align*}
	\int_\mms \sfp(s,x,\cdot)\,\vert \calk_1\vert\d\vol &= \int_\mms \sfp^{1/q}(s,x,\cdot)\,\sfp^{1/p}(s,x,\cdot)\,\vert \calk_1\vert\d\vol\\
	&\leq \Big[\!\int_\mms \sfp(s,x,\cdot)\d\vol\Big]^{1/q}\,\Big[\!\int_\mms \vert \calk_1\vert^p\,\sfp(s,\cdot,x)\d\vol\Big]^{1/p}\\
	&\leq \Big[\!\int_\mms \vert \calk_1\vert^p\,\Xi\,\Sigma(s)\d\meas\Big]^{1/p} = \Sigma^{1/p}(s)\,\Vert \calk_1\Vert_{\Ell^p(\mms,\Xi\,\vol)}.\qedhere
	\end{align*}
\end{proof}

\begin{remark} Since $\inf\Xi > 0$, we trivially have $\Ell^p(\mms,\Xi\,\vol) \subset\Ell^p(\mms)$.
\end{remark}

\begin{remark} Of course, \autoref{Th:Kato criterion} holds true on any (quasi-)regular, strongly local Dirichlet space $(\mms,\Ch,\meas)$ satisfying the absolute continuity hypothesis, i.e.~the existence of a heat kernel for $(\ChHeat_t)_{t\geq 0}$ \cite[Def.~A.2.16]{chen2012}.
\end{remark}

\begin{example} In the setting of \autoref{Ex:Spher symm} and \autoref{Ex:Spher symm II}, we have $\Ell^p(\mms_\psi) \subset\Kato(\mms_\psi)$ for every $p\in (d/2,\infty)$.
\end{example}

\subsubsection{Feynman--Kac semigroups induced by Kato class measures}
%Our next step consists in extending the uniform bound on the Ricci curvature to a distributional-valued lower bound and considering \eqref{eq:BIneqGamma} in a distributional sense. 
We denote by $\smash{((\Prob^x)_{x\in\mms}, (\sfb_t)_{t\in [0,\zeta)})}$ the $\vol$-reversible, continuous, strong Markov process associated with $(\Ch, W^{1,2}(\mms))$ and with explosion time $\zeta$. As detailed in \cite[Sec.~2.2, Sec.~2.4]{ERST20}, every $\kappa\in\Kato(\mms)$ induces an $\Ch$-quasi-local distribution on $W^{1,2}(\mms)$, and is  thus properly associated with a local continuous additive functional (AF, in short) $\smash{(\sfa_t^\kappa)_{t\in [0,\zeta)}}$, which is unique up to equivalence of AF's  \cite[Lem.~2.9]{ERST20}. For instance, if $\kappa = \calk\,\vol$ for  some nearly Borel function $\calk \in L^2(\mms)$, this AF can be explicitly represented as
\begin{align*}
\sfa^\kappa_t = \int_0^t f(\sfb_s) \, \d s.
\end{align*}

We associate to $\kappa \in \Kato(\mms)$ the \emph{Feynman--Kac semigroup} $(\sfP^\kappa_t)_{t \ge 0}$, defined in terms of the Feynman--Kac formula
\begin{equation}\label{eq:FKformula}
\sfP^\kappa_t f := \Exp^{\, \cdot} \big[\rme^{-\sfa_t^\kappa}\, f(\sfb_t)\,\One_{\{t < \zeta\}}\big]
\end{equation}
for every Borel function $f\colon\mms\to [0,\infty)$ and every $t \ge 0$. By Khasminskii's lemma \cite[Lem.~2.24]{ERST20}, the distribution induced by $\kappa$ is moderate \cite[Def.~2.13]{ERST20}, which implies that $(\Schr{\kappa}_t)_{t\geq 0}$ extends to a strongly continuous, exponentially bounded semigroup on $\Ell^p(\mms)$ for every $p\in [1,\infty]$ \cite[Rem.~2.14]{ERST20}.

\subsubsection{Singular Bakry--Émery condition}\label{Sub:Taming cond for dis} The key feature of $\Ch$-quasi-local distributions stemming from $\Kato(\mms)$ is that $\smash{(\Schr{q\kappa/2}_t)_{t\geq 0}}$, $q\in [1,\infty)$, can be treated with form techniques, which is briefly recapitulated here.

The following result from \cite[Cor.~2.25]{ERST20} is crucial for this purpose.

\begin{lemma}\label{Le:Form boundedness} For every $\rho > 0$, there exists $\alpha\in\R$ such that for every $f\in\F$,
	\begin{align*}
		\int_\mms \widetilde{f}^2\d\vert\kappa\vert \leq \rho\,\Ch(f) + \alpha\int_\mms f^2\d\vol.
	\end{align*}
\end{lemma}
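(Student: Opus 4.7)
The plan is to invoke the classical Stollmann--Voigt form boundedness estimate for Kato class measures. Set
$$\psi(t) := \sup_{x \in \mms} \int_0^t \int_\mms \sfp(s, x, \cdot)\,d|\kappa|\,ds;$$
since $|\kappa| \in \Kato(\mms)$ whenever $\kappa \in \Kato(\mms)$, \autoref{Def:KatoIntroII} gives $\psi(t) \to 0$ as $t \downarrow 0$.

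The heart of the argument is the quantitative bound
$$\int_\mms \widetilde{f}^2\,d|\kappa| \leq 2\,\psi(t)\,\bigl[\Ch(f) + t^{-1}\,\|f\|_{\Ell^2(\mms)}^2\bigr]$$
valid for every $f \in \F$ and every $t > 0$. I would derive this by combining two standard facts. First, since $|\kappa|$ does not charge $\Ch$-polar sets, it admits a local continuous AF $\sfa^{|\kappa|}$ with Revuz measure $|\kappa|$, and the Revuz correspondence yields both
$$\Exp^x\bigl[\sfa_t^{|\kappa|}\bigr] = \int_0^t \int_\mms \sfp(s,x,y)\,d|\kappa|(y)\,ds \leq \psi(t),$$
and the identity $\int_\mms h\,\Exp^\cdot[\sfa_t^{|\kappa|}]\,d\vol = \int_0^t \int_\mms \ChHeat_s h\,d|\kappa|\,ds$ for Borel $h\geq 0$. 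Second, the spectral theorem applied to the nonpositive self-adjoint generator $\Delta/2$ of $\Ch$ gives $\|f - \ChHeat_t f\|_{\Ell^2(\mms)}^2 \leq t\,\Ch(f)$, because $(1 - e^{-\lambda t/2})^2 \leq (\lambda t/2)(1 - e^{-\lambda t/2})$ when integrated against the spectral measure of $-\Delta/2$. Splitting $\widetilde{f} = \ChHeat_t \widetilde{f} + (\widetilde{f}-\ChHeat_t\widetilde{f})$, squaring, and integrating against $|\kappa|$, one controls the $(\ChHeat_t\widetilde{f})^2$-term by the Revuz identity combined with the pointwise Jensen-type bound $(\ChHeat_t\widetilde{f})^2 \leq \ChHeat_t(\widetilde{f}^2)$, which contributes $\psi(t)\,t^{-1}\|f\|_{\Ell^2(\mms)}^2$; the cross term and remainder are handled by Cauchy--Schwarz together with the semigroup estimate, contributing $\psi(t)\,\Ch(f)$.

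Once this is in place, the conclusion is immediate: given $\rho > 0$, use $\psi(t) \to 0$ to pick $t > 0$ with $2\,\psi(t) \leq \rho$, and set $\alpha := 2\,\psi(t)/t$.

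The main obstacle is the rigorous handling of quasi-continuous representatives $\widetilde{f}$ and of the AF $\sfa^{|\kappa|}$, for which the property that $|\kappa|$ does not charge $\Ch$-polar sets --- built into the Kato class definition --- is essential; this is, however, by now classical in (quasi-)regular Dirichlet form theory, see e.g.~\cite{chen2012,stollmann1996}.
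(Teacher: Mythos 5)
The paper does not prove \autoref{Le:Form boundedness}; it is cited verbatim from \cite[Cor.~2.25]{ERST20}, which in turn rests on the Stollmann--Voigt form-boundedness theorem for Kato-class measures \cite{stollmann1996}. Your overall plan --- pass to $\psi(t)$, establish the quantitative bound $\int_\mms\widetilde{f}^2\d\vert\kappa\vert\leq 2\psi(t)\,[\Ch(f)+t^{-1}\Vert f\Vert_{\Ell^2(\mms)}^2]$, then pick $t$ with $2\psi(t)\leq\rho$ --- is exactly the standard one, and the displayed inequality is in fact true. But the derivation you sketch for it does not close. The decisive gap is the remainder term: $\int_\mms(\widetilde{f}-\ChHeat_t\widetilde{f})^2\d\vert\kappa\vert$ is still an integral against $\vert\kappa\vert$, whereas your spectral estimate $\Vert f-\ChHeat_tf\Vert_{\Ell^2(\mms)}^2\leq t\,\Ch(f)$ is an $\Ell^2(\mms,\vol)$-statement; turning the $\Ell^2(\vert\kappa\vert)$-norm of $\widetilde{f}-\ChHeat_t\widetilde{f}$ into a $\vol$-quantity is precisely the form bound you are trying to prove, so the splitting $\widetilde{f}=\ChHeat_t\widetilde{f}+(\widetilde{f}-\ChHeat_t\widetilde{f})$ is circular. (There is also a subtler issue with the $(\ChHeat_t\widetilde{f})^2$-term: after Fubini and symmetry of $\sfp$ you would need $\sup_x\int_\mms\sfp(t,x,\cdot)\d\vert\kappa\vert\leq\psi(t)/t$, but $\psi(t)$ controls the time-integrated quantity $\sup_x\int_0^t\int_\mms\sfp(s,x,\cdot)\d\vert\kappa\vert\d s$, not the slice at $s=t$; the natural comparison $\psi(t)\leq\int_0^t\sup_x\int_\mms\sfp(s,x,\cdot)\d\vert\kappa\vert\d s$ goes the wrong way.)

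The correct route avoids a semigroup split of $\widetilde{f}$ altogether. Writing $G_\alpha\vert\kappa\vert:=\int_0^\infty\rme^{-\alpha s}\int_\mms\sfp(s,\cdot,y)\d\vert\kappa\vert(y)\d s$ for the $\alpha$-resolvent potential, Stollmann--Voigt prove the estimate $\int_\mms\widetilde{f}^2\d\vert\kappa\vert\leq\Vert G_\alpha\vert\kappa\vert\Vert_{\Ell^\infty(\mms)}\,[\Ch(f)+\alpha\Vert f\Vert_{\Ell^2(\mms)}^2]$ directly, using the defining property of the $\alpha$-potential of the smooth measure $\vert\kappa\vert$ together with a Schur-test/energy argument rather than a decomposition of $\widetilde{f}$. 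The Markov property of the additive functional --- your Revuz identity, iterated over the windows $[nt,(n+1)t]$ --- then yields $\Vert G_\alpha\vert\kappa\vert\Vert_{\Ell^\infty(\mms)}\leq\psi(t)\,(1-\rme^{-\alpha t})^{-1}$, and the choice $\alpha:=1/t$ recovers your target bound. So the idea and the intermediate inequality are right, but reaching them requires the potential-theoretic argument, not the heat-semigroup split.
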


Hence, by \cite[Thm.~2.49]{ERST20} the quadratic form
\begin{align*}
\Ch^{q\kappa/2}(f) := \Ch(f) + \frac{q}{2}\int_\mms\widetilde{f}^2\d\kappa
\end{align*}
is closed, lower semibounded in $L^2(\mms)$ and associated to $\smash{(\Schr{q\kappa/2}_t)_{t \ge 0}}$. Moreover, its domain $\smash{\calD(\Ch^{q\kappa/2}) := \{f \in \F : \int_\mms \widetilde{f}^2\d\vert\kappa\vert < \infty\}}$ corresponds to the whole $\F$. We call this form the \emph{taming energy}, and we denote by $\sfL^{q\kappa/2}$ the associated generator. We write $\Delta^{q\kappa} := 2\,\sfL^{q\kappa/2}$ and denote its domain by $\calD(\Delta^{q\kappa})$.

The following definition follows \cite[Def.~3.1]{ERST20}.

\begin{definition} We call $(\mms,\Rmet)$ \emph{tamed} \textnormal{(}through a given $\kappa\in\Kato(\mms)$\textnormal{)} if the Bakry--\smash{Émery} condition $\BE_1(\kappa,\infty)$ holds, that is, if for every $f\in \F$ with $\Delta f \in\F$ and every nonnegative $\phi\in \Dom(\Delta^{\kappa})$, we have
	\begin{align*}
		\int_\mms \Delta^\kappa\phi\,\vert\rmd f\vert^2\d\vol - \int\phi\,\Rmet^*(\rmd f,\rmd\Delta f)\,\vert\rmd f\vert^{-1}\d\vol \geq 0,
	\end{align*}
	where the second integral is taken over the set where $\vert \rmd f\vert > 0$.
\end{definition}

%We say that a Riemannian Lipschitz manifold $(\mms, \Rmet)$ is \emph{tamed} by a distribution $\kappa \in \Kato_{1-} (\mms)$ if the following Bochner inequality
%\begin{equation}\label{eq:tamedLip}
%\displaystyle \mathcal E^{\kappa/2} \big(\varphi, |\nabla f|\big) + \int \varphi \dfrac{\Rmet(\nabla f, \nabla \Delta f)}{|\nabla f |} \, \d \vol \le 0
%\end{equation}
%holds true for every $f \in \mathcal D(\Delta)$ with $\Delta f \in \mathcal F$ and every $\varphi \in \mathcal D(\Delta^{\kappa/2}), \varphi \ge 0$ with $\Delta^{\kappa/2} \varphi \in L^\infty(\mms, \vol)$. We remark that the second term in \eqref{eq:tamedLip} is well-defined, since the locality of $\mathcal E$ ensures that $\Delta f$ vanishes $\vol$-a.e. on $\{ |\nabla f | = 0 \}$.

%In this case, $\kappa$ will be called the \emph{distribution-valued lower Ricci bound} or \emph{taming distribution} for $(\mms, \Rmet)$.

%Lastly, it is useful to recall (see \cite[Theorem 3.4]{ERST20}) that  the taming condition \eqref{eq:tamedLip} is equivalent to the validity, for any $f \in \mathcal F$ and any $t > 0$, of the following gradient estimates $\rm{GE}_1(\kappa, \infty)$, which involves the semigroup and the taming semigroup,
%\begin{equation}\label{eq:GEtaming}
%\Gamma(\ChHeat_t f)^{1/2} \le \ChHeat_t^{\kappa/2} \Gamma(f)^{1/2}.
%\end{equation}

\subsection{An interior taming criterion}\label{Sub:An interior}

\begin{definition}\label{Def:Almost smooth} We call the LR manifold $(\mms,\Rmet)$ \emph{almost smooth} if there exists an open, $\vol$-conegligible subset $\mms^\infty$ of $\mms$ for which $\smash{(\mms^\infty, \Rmet\big\vert_{\mms^\infty})}$ is a smooth  Riemannian manifold.
\end{definition}

Reminiscent of \autoref{prop:Existence control pair}, \autoref{Cor:Quasi-isometry} and \autoref{Th:Kato criterion},  we now establish our main taming result, \autoref{Th:Tamed convex boundary}, for Kato Ricci bounds which are absolutely continuous w.r.t.~$\vol$.  The crucial step to prove \autoref{Th:Tamed convex boundary} is contained in  \autoref{Le:Reg lemma}. A similar strategy as in its proof has been pursued for different regularity results within a general second order calculus for tamed spaces, see e.g.~\cite[Cor.~5.12, Lem.~8.8]{braun2021}.

\begin{theorem}\label{Th:Tamed convex boundary} Let $(\Xi,\Sigma)$ be a given heat kernel control pair for the given almost smooth LR manifold $(\mms,\Rmet)$, and let $\calk\in\Ell^p(\mms,\Xi\,\vol)+\Ell^\infty(\mms)$ for some $p\in (d/2,\infty)$. Suppose that $\Delta$ is essentially self-adjoint on $\smash{\Cont_\comp^\infty(\mms^\infty)}$. Lastly, assume that the Ricci curvature of $\mms^\infty$ is bounded from below by $\calk$. Then $(\mms,\Rmet)$ is tamed by 
	\begin{align*}
		\kappa := \calk\,\vol.
	\end{align*}
\end{theorem}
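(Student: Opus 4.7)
The plan is to reduce the taming property $\BE_1(\kappa,\infty)$ to the classical pointwise Bochner identity on the smooth part $\mms^\infty$, and then to pass from smooth compactly supported test functions to arbitrary $f\in\Dom(\Delta)$ via the essential self-adjointness hypothesis.

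First I would verify that $\kappa=\calk\,\vol$ indeed lies in $\Kato(\mms)$: this is precisely the second assertion of \autoref{Th:Kato criterion}, so the entire abstract machinery of \autoref{Sub:Taming cond for dis} is available. In particular, by \autoref{Le:Form boundedness} the taming energy $\Ch^\kappa$ is closed and its domain agrees with $\F$, hence $\Dom(\Delta^\kappa)$ is a dense subset of $\F$ on which integration by parts against $\vert\rmd f\vert^2$ is legitimate.

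The heart of the argument is the \emph{integrated $1$-Bochner inequality}. For any $f\in\Cont_\comp^\infty(\mms^\infty)$ the classical Bochner identity on the smooth Riemannian manifold $\smash{(\mms^\infty,\Rmet\big\vert_{\mms^\infty})}$ reads
\begin{align*}
\frac{1}{2}\Delta\vert\rmd f\vert^2 - \Rmet^*(\rmd f, \rmd\Delta f) = \vert\Hess f\vert_{\mathrm{HS}}^2 + \Ric(\rmd f,\rmd f) \geq \vert\Hess f\vert_{\mathrm{HS}}^2 + \calk\,\vert\rmd f\vert^2,
\end{align*}
pointwise on $\mms^\infty$, hence $\vol$-a.e.~on $\mms$ by almost smoothness. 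I would then invoke the Kato-type refinement (exploiting the improved Hessian estimate $\vert\Hess f\vert_{\mathrm{HS}}^2 \geq \vert \rmd\vert\rmd f\vert\,\vert^2$ on $\{\vert\rmd f\vert>0\}$) to upgrade this to the weak $1$-Bochner inequality
\begin{align*}
(\Delta-\calk)\vert\rmd f\vert - \Rmet^*(\rmd f, \rmd\Delta f)\,\vert\rmd f\vert^{-1} \geq 0.
\end{align*}
Multiplying by a nonneg $\phi\in\Dom(\Delta^\kappa)\cap\Cont_\comp(\mms^\infty)$ and integrating by parts using the smooth structure on $\supp\phi$ converts this into the integrated form required by the definition of $\BE_1(\kappa,\infty)$.

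The last and most delicate step is to relax the regularity assumption from $f\in\Cont_\comp^\infty(\mms^\infty)$ to arbitrary $f\in\F$ with $\Delta f\in\F$, and similarly for $\phi$. This is where essential self-adjointness of $\Delta$ on $\smash{\Cont_\comp^\infty(\mms^\infty)}$ enters: by \cite{hess1980}, as already noted in the introduction, it ensures that $\vert\rmd f\vert\in W^{1,2}(\mms)$ whenever $f\in\Dom(\Delta)$, which is exactly the content of the regularity \autoref{Le:Reg lemma}. Moreover, essential self-adjointness yields density of $\smash{\Cont_\comp^\infty(\mms^\infty)}$ in $\Dom(\Delta)$ w.r.t.~the graph norm, so both sides of the integrated $1$-Bochner inequality are continuous under this approximation (here the form-boundedness of $\vert\kappa\vert$ from \autoref{Le:Form boundedness} is used to control the $\calk$-term by $\Ch(\phi)+\Vert\phi\Vert_{\Ell^2(\mms)}^2$). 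Passing to the limit yields $\BE_1(\kappa,\infty)$ for all admissible $f$ and $\phi$.

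The main obstacle is the second step: making rigorous the passage from the pointwise $\vert\rmd f\vert^2$-Bochner identity on $\mms^\infty$ to a \emph{global} weak $1$-Bochner inequality on $\mms$ that can be tested against $\phi\in\Dom(\Delta^\kappa)$. The almost smoothness handles the $\vol$-null singular set, but the chain-rule step from $\vert\rmd f\vert^2$ to $\vert\rmd f\vert$ requires careful truncation near the zero set of $\vert\rmd f\vert$ together with the Schr\"odinger-type integration by parts against the Kato potential $\calk$; the $W^{1,2}$-regularity granted by \autoref{Le:Reg lemma} is precisely what makes this manipulation admissible.
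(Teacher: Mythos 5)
Your proposal follows the same high-level strategy as the paper --- establish $\kappa\in\Kato(\mms)$ via \autoref{Th:Kato criterion}, exploit the smooth Bochner identity on $\mms^\infty$, use almost smoothness to pass to a $\vol$-a.e.~statement on $\mms$, and use essential self-adjointness together with \autoref{Le:Reg lemma} to close the argument. You also correctly identify the main obstacle: the passage from $\vert\rmd f\vert^2$ to $\vert\rmd f\vert$ near $\{\vert\rmd f\vert=0\}$, interleaved with the Schrödinger-type integration by parts. So the conceptual outline is sound.

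However, there are three technical choices in your plan that differ from the paper, and each hides a genuine gap you would need to address.

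First, you propose to derive a pointwise weak $1$-Bochner inequality $(\Delta-\calk)\vert\rmd f\vert - \Rmet^*(\rmd f,\rmd\Delta f)\vert\rmd f\vert^{-1}\geq 0$ and then test it against $\phi$. The $\vert\rmd f\vert^{-1}$ term is singular on $\{\vert\rmd f\vert=0\}$ and the chain rule for the Laplacian degenerates there, so this pointwise inequality is not obviously meaningful. The paper never passes to a pointwise $1$-Bochner inequality at all: it keeps the $\eta_\varepsilon$-regularized version $\Delta[\eta_\varepsilon\circ\vert\rmd f\vert^2]\geq 2[\eta_\varepsilon'\circ\vert\rmd f\vert^2][\Rmet^*(\rmd f,\rmd\Delta f)+\calk\vert\rmd f\vert^2]$, multiplies by $\phi$ and integrates by parts \emph{at this level}, and only then lets $\varepsilon\to 0$ (via Lebesgue) and removes an auxiliary truncation $\min\{\calk^+,R\}$ (via Levi, $R\to\infty$). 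This ordering of limits, and in particular the truncation of $\calk^+$ so that dominated convergence applies, are precisely the devices your ``careful truncation'' placeholder would have to specify. Without them, the $\varepsilon\to 0$ limit of the $\calk$-term has no available majorant.

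Second, you test against $\phi\in\Dom(\Delta^\kappa)\cap\Cont_\comp(\mms^\infty)$. It is not clear that this class is dense in $\Dom(\Delta^\kappa)$: $\vol$-conegligibility of $\mms^\infty$ does not imply $\Ch$-capacity-negligibility of the singular set, which is what density would require. The paper tests against nonnegative $\phi\in\Lip(\mms)$ with bounded support (a genuine core for $W^{1,2}(\mms)$ by \autoref{Sub:Weak}), extends to $\phi\in W^{1,2}(\mms)$ by continuity, and only then to $\phi\in\Dom(\Delta^\kappa)$ by a final integration by parts. The integration by parts at the Lipschitz level is justified exactly by \autoref{Le:Reg lemma}, which places $\vert\rmd f\vert$ in $W^{1,2}(\mms)$; restricting to $\supp\phi\subset\mms^\infty$ to exploit smoothness of the metric is unnecessary and introduces the density issue just described.

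Third, you start from $f\in\Cont_\comp^\infty(\mms^\infty)$ and plan to approximate general $f$ in the $\Ell^2$-graph norm using essential self-adjointness. But $\BE_1(\kappa,\infty)$ requires $f\in\F$ with $\Delta f\in\F$, i.e.~$W^{1,2}$-regularity of $\Delta f$, and the graph norm does not control $\Vert\Delta f_n - \Delta f\Vert_{W^{1,2}}$. The paper instead takes $f\in\bigcup_{t>0}\ChHeat_t\Ell^2(\mms)$, which by local elliptic regularity is automatically $\Cont^\infty$ on $\mms^\infty$ and lies in $\Dom(\Delta^k)$ for every $k$ (so $\Delta f\in\F$ for free), and then delegates the final density extension to the machinery of \cite[Thm.~3.4]{ERST20}. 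Essential self-adjointness is used only once, inside \autoref{Le:Reg lemma}, to obtain $\vert\rmd f\vert\in W^{1,2}(\mms)$ for all $f\in\Dom(\Delta)$ --- not to approximate the test function $f$ itself in the main argument.

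In short: right tools, right obstacle identified, but the regularization-before-differentiation ordering, the choice of test class for $\phi$, and the dense class for $f$ all need to be fixed as above for the argument to close.
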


%\begin{remark}[\comment{Notes for Mathias}] \comment{The proof below, with minor modifications and capacity assumptions, works if $\calk$ is bounded from below. This covers at least the cube case...}

%Another assumption could be essential self-adjointness of $\Delta$ on $\smash{\Cont_\comp^\infty}$. According to Kohei (private communication) this holds if the singular set has at most codimension 4.
%\end{remark}

%\begin{remark} Of course, every sequence $(V_k)_{k\in\N}$ of open, relatively compact subsets of $\mms$ satisfying \ref{La:bb}, \ref{La:cc} and \ref{La:dd} in place of $(U_n)_{n\in\N}$ as well as the disjointness $\smash{\overline{V}_k \cap \overline{V}_{k'} = \emptyset}$ for every $k,k'\in\N$, $k\neq k'$, can be turned into an increasing sequence $(U_n)_{n\in\N}$ as in \autoref{Th:Tamed convex boundary} (set $\smash{U_n := V_1\cup\dots\cup V_n}$, $n\in\N$).
%\end{remark}

\begin{lemma}\label{Le:Reg lemma} Under the assumptions of \autoref{Th:Tamed convex boundary}, if $f\in\Dom(\Delta)$ then $\vert\rmd f\vert\in W^{1,2}(\mms)$.
\end{lemma}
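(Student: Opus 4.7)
Plan of proof. The strategy is to exploit essential self-adjointness to approximate $f$ by smooth test functions compactly supported in $\mms^\infty$, apply the classical Bochner identity on the smooth part, absorb the resulting Ricci-curvature term into the Dirichlet energy via form boundedness of Kato measures, and pass to the limit. Since $\Delta$ is essentially self-adjoint on $\Cont_\comp^\infty(\mms^\infty)$, every $f\in\Dom(\Delta)$ admits a sequence $(f_n)_{n\in\bbN} \subset \Cont_\comp^\infty(\mms^\infty)$ with $f_n\to f$ and $\Delta f_n \to \Delta f$ in $\Ell^2(\mms)$. From $\Ch(g) = -\tfrac{1}{2}\int g\,\Delta g\d\vol$ for $g\in\Dom(\Delta)$ and the Cauchy--Schwarz inequality it follows that $(f_n)$ is Cauchy in $\F = W^{1,2}(\mms)$; in particular $|\rmd f_n|\to |\rmd f|$ in $\Ell^2(\mms)$.

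Each $f_n$ is smooth and compactly supported in the smooth Riemannian manifold $(\mms^\infty,\Rmet|_{\mms^\infty})$, so the classical pointwise Bochner identity
$$\tfrac{1}{2}\Delta|\rmd f_n|^2 = |\Hess f_n|^2 + \Rmet^*(\rmd f_n,\rmd \Delta f_n) + \Ric(\nabla f_n,\nabla f_n)$$
holds on $\mms^\infty$. Integrating against $\vol$, the left-hand side vanishes thanks to compact interior support, while $\int \Rmet^*(\rmd f_n, \rmd \Delta f_n)\d\vol = -\int (\Delta f_n)^2\d\vol$ by integration by parts. Using the hypothesis $\Ric \geq \calk$ on $\mms^\infty$ together with the pointwise Kato inequality $|\rmd|\rmd f_n||^2 \leq |\Hess f_n|^2$ (which holds $\vol$-a.e.~on $\mms^\infty$), we deduce
$$\int_\mms |\rmd|\rmd f_n||^2\d\vol \leq \int_\mms (\Delta f_n)^2\d\vol - \int_\mms \calk\,|\rmd f_n|^2\d\vol.$$

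The remaining task is to control the $\calk$-term uniformly in $n$. By \autoref{Th:Kato criterion}, the measure $\kappa := \calk\vol$ belongs to $\Kato(\mms)$; hence \autoref{Le:Form boundedness} applied to $g := |\rmd f_n|\in \F$ (note $|\rmd f_n|$ is continuous and compactly supported in $\mms^\infty$, hence in $\F$) supplies, for every $\rho > 0$, some $\alpha\in\bbR$ with
$$\int_\mms |\calk|\,|\rmd f_n|^2\d\vol \leq \rho\,\Ch(|\rmd f_n|) + \alpha\int_\mms|\rmd f_n|^2\d\vol.$$
Choosing $\rho<2$ and absorbing the $\Ch(|\rmd f_n|) = \tfrac{1}{2}\int |\rmd|\rmd f_n||^2 \d\vol$ term into the previous display yields the uniform bound
$$\int_\mms |\rmd|\rmd f_n||^2\d\vol \leq C\bigl(\|\Delta f_n\|_{\Ell^2(\mms)}^2 + \||\rmd f_n|\|_{\Ell^2(\mms)}^2\bigr)$$
with $C$ independent of $n$. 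The right-hand side is controlled uniformly by the approximation step, so $(|\rmd f_n|)$ is bounded in $W^{1,2}(\mms)$; weak compactness together with the $\Ell^2$-convergence $|\rmd f_n|\to|\rmd f|$ force the weak limit to equal $|\rmd f|$, giving $|\rmd f|\in W^{1,2}(\mms)$.

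The main obstacle is the Ricci term $-\int \calk\,|\rmd f_n|^2\d\vol$ above: since $\calk$ is only a Kato-class function, it cannot be controlled by $\||\rmd f_n|\|_{\Ell^2(\mms)}$ alone, and no naive integrability assumption would close the argument. The key is precisely that Kato measures enjoy form boundedness with arbitrarily small prefactor $\rho$, which allows absorption into $\Ch(|\rmd f_n|)$ and yields the required uniform gradient bound.
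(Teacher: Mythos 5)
Your proof is correct and follows essentially the same route as the paper: approximate via essential self-adjointness, apply Bochner's identity and the Hess--Schrader--Uhlenbrock/Kato inequality on the smooth core, absorb the Ricci term through form boundedness of the Kato measure (\autoref{Le:Form boundedness}), and conclude by weak compactness / lower semicontinuity. The only presentational difference is that the paper first inserts the regularization $\eta_\varepsilon(r)=\sqrt{r+\varepsilon}$, multiplies $\Delta[\eta_\varepsilon\circ|\rmd f_n|^2]$ by $\eta_\varepsilon\circ|\rmd f_n|^2$, integrates by parts, and then sends $\varepsilon\to 0$, whereas you integrate the Bochner identity for $|\rmd f_n|^2$ directly (so the left side vanishes by compact support) and use Kato a.e.\ to pass from $\int|\!\Hess f_n|_\HS^2\d\vol$ to $\int|\rmd|\rmd f_n||^2\d\vol$; both yield the same uniform a priori bound, and your variant is a small simplification (the paper likely keeps the $\eta_\varepsilon$ machinery because the same computation is reused to derive $\BE_1$ in the proof of \autoref{Th:Tamed convex boundary}).
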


\begin{proof} %In this proof, we omit making notationally explicit  the dependency of every relevant calculus object on the given smooth manifold $U_n$  on $n\in\N$. They always coincide locally a.e.~with their global versions on $\mms$ or on $\smash{\bigcup_{n\in\N} U_n}$.

%The smoothness assumption on $f$ and \ref{La:dd}~guarantee that the assignment
%\begin{align*}
%\rmd\vert\rmd f\vert(x) := \begin{cases}
%\rmd\vert \rmd(f\big\vert_{U_n})\vert(x) & \text{if }x\in U_n\cap \{\vert\rmd f\vert > 0\} \text{ for some }n\in\N,\\
%0 & \text{otherwise}
%\end{cases}
%\end{align*}
%gives rise to a well-defined differential $\rmd\vert\rmd f\vert$ of $\vert\rmd f\vert$. Note that
%	\begin{align}\label{Eq:Kato inequ}
%		\big\vert\rmd \vert\rmd f\vert\big\vert \leq \big\vert\!\Hess(f\big\vert_{U_n})\big\vert_\HS\quad\vol\text{-a.e.}\quad\text{on }U_n.
%	\end{align}

The claim is straightforward if $f\in \Cont_\comp^\infty(\mms^\infty)$. Indeed, since the set $\mms\setminus \mms^\infty$ is $\vol$-ne\-gli\-gible, $\vert\rmd f\vert$ has a differential $\rmd\vert\rmd f\vert$ which vanishes $\vol$-a.e.~on $\{\vert\rmd f\vert = 0\}$, and
\begin{align}\label{Eq:Kato inequ}
		\big\vert\rmd \vert\rmd f\vert\big\vert \leq \big\vert\!\Hess f\big\vert_\HS\quad\vol\text{-a.e.}
\end{align}
by Kato's inequality for the Bochner Laplacian \cite[Prop.~2.2]{hess1980}, thus $\vert\rmd f\vert\in W^{1,2}(\mms)$.

To deduce the claim for general elements of $\Dom(\Delta)$ with Laplacian in $W^{1,2}(\mms)$, we first collect some preliminary considerations, still assuming that $\smash{f\in\Cont_\comp^\infty(\mms^\infty)}$.
%In particular, $\smash{\vert\rmd f\vert\big\vert_{U_n}}\in W^{1,2}(U_n)$ since $\smash{f\big\vert_{\overline{U}_n}}\in \smash{\Cont^\infty(\overline{U}_n)}$ and since $\smash{\overline{U}_n}$ is compact. According to \autoref{Sub:Weak}, the claim  will thus follow if $\vert\rmd\vert\rmd f\vert\vert\in \Ell^2(\mms)$.	
%Owing to the almost smoothness of $\mms$ and $f$, we construct an increasing sequence $(U_n)_{n\in\N}$ of open subsets of $\mms$ such that $\smash{\meas\big[\mms\setminus \bigcup_{n\in\N} U_n\big] = 0}$, and for every $n\in\N$, the set $U_n$ is the union of finitely many open, relatively compact balls, $\smash{(\overline{U}_n,\Rmet\big\vert_{\overline{U}_n})}$ is a complete, possibly disconnected, smooth Riemannian manifold, and $\smash{f\big\vert_{\overline{U}_n}\in\Cont^\infty(\overline{U}_n)}$. Since the intrinsic distance generated by $\Ch$ generates the topology on $\mms$ (recall \autoref{Sub:Heat kernel}), we moreover construct a sequence $(\phi_n)_{n\in\N}$ of Lipschitz functions on $\mms$ with $\supp\phi_n\subset U_n$, $\phi_n(\mms) = [0,1]$ and $\vert\nabla\phi_n\vert \leq 1$ $\meas$-a.e.~for every $n\in\N$, as well as $\phi_n \to \One_\mms$ pointwise $\meas$-a.e.~as $n\to\infty$ \cite[Rem.~3.9]{ERST20}.
Given any $\varepsilon > 0$, define $\eta_\varepsilon\in \Cont^\infty([0,\infty))$ through $\smash{\eta_\varepsilon(r) := (r+\varepsilon)^{1/2}}$. Note that $\smash{4\,\eta_\varepsilon''(r)\,r \geq -2\,\eta_\varepsilon'(r)}$ and $1/\eta_\varepsilon'(r) = 2\,\eta_\varepsilon(r)$ for every $r\geq 0$. Moreover, since $\vert \rmd f\vert^2\in\smash{\Cont_\comp^\infty(\mms^\infty)}$, successively applying the chain rule for the Laplacian and employing Bochner's identity on $\mms^\infty$ as well as \eqref{Eq:Kato inequ} yields
\begin{align}\label{Eq:Bochner 1}
\begin{split}
\Delta \big[\eta_\varepsilon\circ \vert\rmd f\vert^2\big] &= \big[\eta_\varepsilon'\circ\vert \rmd f\vert^2\big]\,\Delta\vert\rmd f\vert^2 + \big[\eta_\varepsilon''\circ\vert\rmd f\vert^2\big]\,\big\vert \rmd\vert\rmd f\vert^2\big\vert^2\\
&\geq 2\,\big[\eta_\varepsilon'\circ\vert \rmd f\vert^2\big]\,\big[\Rmet^*(\rmd f,\rmd\Delta f) + \calk\,\vert\rmd f\vert^2 + \big\vert\!\Hess f\big\vert_\HS^2\big]\\
&\qquad\qquad + 4\,\big[\eta_\varepsilon''\circ\vert\rmd f\vert^2\big]\,\vert\rmd f\vert^2\,\big\vert \rmd\vert\rmd f\vert\big\vert^2\\
&\geq 2\,\big[\eta_\varepsilon'\circ\vert \rmd f\vert^2\big]\,\big[\Rmet^*(\rmd f,\rmd\Delta f) + \calk\,\vert\rmd f\vert^2+ \big\vert\!\Hess f\big\vert_\HS^2 - \big\vert\rmd \vert\rmd f\vert\big\vert^2\big]\\
&\geq 2\,\big[\eta_\varepsilon'\circ\vert \rmd f\vert^2\big]\,\big[\Rmet^*(\rmd f,\rmd\Delta f) - \calk^-\,\vert\rmd f\vert^2\big]\quad\vol\text{-a.e.}\textcolor{white}{\big\vert_\HS^2}
\end{split}
\end{align}
Dividing by $2\,\eta_\varepsilon'\circ\vert\rmd f\vert^2$,  integrating the result on $\mms$ and using that $\Delta[\eta_\varepsilon\circ\vert\rmd f\vert^2]$ has compact support in $\smash{\mms^\infty}$ 
gives
\begin{align}\label{Eq:BOUND}
\int_\mms \big\vert\rmd\big[\eta_\varepsilon\circ \vert\rmd f\vert^2\big]\big\vert^2\d\vol & \leq -\int_\mms \Rmet^*(\rmd f,\rmd \Delta f) \d\vol + \int_\mms \calk^-\,\vert\rmd f\vert^2\d\vol.
\end{align}
By \autoref{Th:Kato criterion}, the measure $\calk^-\,\vol$ belongs to $\Kato(\mms)$. Since $\vert\rmd f\vert\in W^{1,2}(\mms)$, by \autoref{Le:Form boundedness} there exists $\alpha\in \R$ (independent of $f$) such that
\begin{align*}
\int_\mms \calk^-\,\vert\rmd f\vert^2\d\vol \leq \frac{1}{2}\int_\mms \big\vert\rmd\vert\rmd f\vert^2\big\vert^2\d\vol + \alpha \int_\mms \vert\rmd f\vert^2 \d\vol.
\end{align*}
Letting $\varepsilon\to 0$ in \eqref{Eq:BOUND} and employing the lower semicontinuity of  $\Ch$ w.r.t.~pointwise $\vol$-a.e.~convergence yields
\begin{align}\label{Eq:IBoch}
\int_\mms \big\vert\rmd\vert\rmd f\vert\big\vert^2\d\vol \leq 2\int_\mms (\Delta f)^2\d\vol + 2\alpha\int_\mms \vert\rmd f\vert^2\d\vol.
\end{align}

Now we  prove the claim for arbitrary $f\in\Dom(\Delta)$. By assumption, there exists a sequence $(f_n)_{n\in\N}$ in $\smash{\Cont_\comp^\infty(\mms^\infty)}$ converging to $f$ in the $\Ell^2$-graph norm of $\Delta$. We may and will assume without restriction that $\vert\rmd f_n\vert\to\vert\rmd f\vert$ pointwise $\vol$-a.e.~as $n\to\infty$. Then, since $(\vert\rmd f_n\vert)_{n\in\N}$ is a bounded sequence in $W^{1,2}(\mms)$ by \eqref{Eq:IBoch}, the claim follows again by lower semicontinuity of $\Ch$.
\end{proof}

\begin{proof}[Proof of \autoref{Th:Tamed convex boundary}]
As seen in the proof of \autoref{Le:Reg lemma}, by \autoref{Cor:Quasi-isometry} it follows that $\calk\,\vol\in \Kato(\mms)$. %Therefore, as proved in \cite[Prop.~2.37]{ERST20} we see that $\calk\,\vol$ as a moderate distribution. 
To derive $\BE_1(\kappa,\infty)$, let $f\in \smash{\bigcup_{t>0} \ChHeat_t\Ell^2(\mms)}$. By local elliptic regularity theory, see e.g.~\cite{grigoryan2009}, we have $\smash{f\in \Cont^\infty(\mms^\infty)}$, whence
\begin{align*}
	\Delta\big[\eta_\varepsilon\circ \vert \rmd f\vert^2\big] \geq 2\,\big[\eta_\varepsilon'\circ \vert\rmd f\vert^2\big]\,\big[\Rmet^*(\rmd f,\rmd\Delta f) + \calk\,\vert\rmd f\vert^2\big]\quad\vol\text{-a.e.}
\end{align*}
for every $\varepsilon >0$ by a similar argument as for \eqref{Eq:Bochner 1}, retaining the notations used above. Multiplying this inequality by any given nonnegative, boundedly supported $\phi \in \Lip(\mms)$ and integrating by parts while using \autoref{Le:Reg lemma} yields
\begin{align*}
	0 &\leq -\int_\mms \Rmet^*(\rmd\phi,\rmd\vert \rmd f\vert)\,\big[\eta_\varepsilon'\circ\vert\rmd f\vert^2\big]\,\vert\rmd f\vert\d\vol - \int_\mms \calk\,\phi\,\big[\eta_{\varepsilon}'\circ\vert\rmd f\vert^2\big]\,\vert\rmd f\vert^2\d\vol\\
	&\qquad\qquad - \int_\mms\phi\,\Rmet^*(\rmd f,\rmd \Delta f)\,\big[\eta_{\varepsilon}'\circ\vert\rmd f\vert^2\big]\d\vol\\
	&\leq -\int_\mms \Rmet^*(\rmd\phi,\rmd\vert \rmd f\vert)\,\big[\eta_\varepsilon'\circ\vert\rmd f\vert^2\big]\,\vert\rmd f\vert\d\vol\\
	&\qquad\qquad  - \int_{\mms} \min\{\calk^+, R\}\,\phi\,\big[\eta_{\varepsilon}'\circ\vert\rmd f\vert^2\big]\,\vert\rmd f\vert^2\d\vol  + \frac{1}{2}\int_{\mms}\calk^- \phi\,\vert\rmd f\vert\d\vol\\
	&\qquad\qquad - \int_\mms\phi\,\Rmet^*(\rmd f,\rmd \Delta f)\,\big[\eta_{\varepsilon}'\circ\vert\rmd f\vert^2\big]\d\vol
	\end{align*}
	for any fixed $R>0$. Setting $\smash{\kappa_R := \min\{\calk^+,R\}\,\meas - \calk^-\,\meas\in\Kato(\mms)}$, as $\varepsilon \to 0$, by Lebesgue's theorem this reduces to
\begin{align*}
0 &\leq -\Ch^{\kappa_R}(\phi,\vert\rmd f\vert) - \int \phi\,\Rmet^*(\rmd f,\rmd \Delta f)\,\vert\rmd f\vert^{-1}\d\vol
\end{align*}
and hence, as $R\to \infty$ by Levi's theorem, to
\begin{align*}
0 &\leq -\Ch^\kappa(\phi,\vert\rmd f\vert) - \int \phi\,\Rmet^*(\rmd f,\rmd \Delta f)\,\vert\rmd f\vert^{-1}\d\vol.
\end{align*}
In both cases, the last integrals are taken over the set where $\vert\rmd f\vert > 0$. By density of boundedly supported Lipschitz functions in $W^{1,2}(\mms)$ outlined in \autoref{Sub:Weak}, this inequality readily extends to all nonnegative $\phi\in W^{1,2}(\mms)$. Indeed, the map $g \mapsto \smash{\int_{\mms} \calk\,g\,\vert\rmd f\vert\d\vol}$ is continuous in $W^{1,2}(\mms)$ by Cauchy--Schwarz's inequality, \autoref{Le:Reg lemma} and \autoref{Le:Form boundedness}.

If in particular $\phi\in \Dom(\Delta^\kappa)$, which is a subset of $W^{1,2}(\mms)$ by \autoref{Sub:Taming cond for dis}, integration by parts again gives
\begin{align*}
	\int_\mms \Delta^\kappa \phi\,\vert\rmd f\vert\d\vol - \int \phi\,\Rmet^*(\rmd f,\rmd\Delta f)\,\vert\rmd f\vert^{-1}\d\vol \geq 0.
\end{align*}
To deduce $\BE_1(\kappa,\infty)$ it remains to follow the proof of \cite[Thm.~3.4]{ERST20}.
\end{proof}

We quickly discuss two situations in which essential self-adjointness of $\Delta$ holds. To couple this with lower Ricci bounds, one can e.g.~take the space $(\mms,\Rmet)$ under consideration to be (quasi-isometric to) a smooth, geodesically complete Riemannian manifold with uniformly lower bounded Ricci curvature, according to the discussions from \autoref{Ex:QI} and \autoref{Ex:QII}. 

\begin{example}[Removal of codimension 4 sets]\label{Ex:Sing I} Endow $\mms := \calM\setminus S$, where $\calM$ is a smooth manifold of dimension $d\geq 4$ and $S$ is a closed submanifold of codimension at least $4$, with a complete Riemannian metric $\Rmet$.  Then the induced Laplacian $\Delta$ is essentially self-adjoint on $\smash{\Cont_\comp^\infty(\mms)}$ \cite[Thm.~3]{masamune1999}.
	
	In the same spirit, one can take $S$ to be the finite union of compact submanifolds, not necessarily all of the same dimension, with codimension at least $4$ \cite[Thm.~5]{prandi2018}. The finiteness and the compactness condition can be weakened \cite[Rem.~3]{prandi2018}, which allows e.g.~for perforated domains such as $\smash{\R^5\setminus \Z^5}$ (which obeys $\BE_1(0,\infty)$).
	
%This can be turned on its head to produce nonsmooth examples. Indeed, let $(\mms,\Rmet)$ be a complete, almost smooth LR manifold such that $\mms\setminus \mms^\infty$ has codimension at least $4$. Essential self-adjointness follows from the above considerations, since the Hilbert closure of $\Delta$ on $\mms$ is equal to the one of its restriction on $\mms^\infty$ \cite[Thm.~3]{masamune1999}.
	
%This provides an example of a \emph{smooth}, noncomplete Riemannian manifold for which the previous statement holds. However, this setting can also be turned on its head as follows. Start with a complete, almost smooth Lipschitz Riemannian manifold $(\mms,\Rmet)$ for which $\mms\setminus \mms^\infty$ has codimension at least $4$. Since the Hilbert closure of $\Delta$ on $\mms^\infty$ coincides with that of $\mms$ \cite[Thm.~3]{masamune1999}, and the above essential self-adjointness holds for the Laplacian on $\mms$.
\end{example}

\begin{example}[Quantum confinements and ARS]\label{Ex:Sing II} Let $(\mms,\Rmet)$ be a smooth, noncomplete Riemannian manifold, and denote by $\smash{(\overline{\mms}, \overline{\met})}$ its metric completion. Suppose that the distance function $\delta\colon \mms \to [0,\infty)$ given by
	\begin{align*}
		\delta(x) := \inf\{ \overline{\met}(x,y) : y\in \overline{\mms}\setminus \mms \}
	\end{align*}
	has regularity $\smash{\Cont^2}$ on $\delta^{-1}((0,\varepsilon])$ for some $\varepsilon > 0$. Lastly, assume that there exists some  $\varrho>0$ such that
	\begin{align*}
		(\Delta\delta)^2 - 2\,\big\vert\!\Hess f\big\vert_{\HS}^2 - 2\,\Ric(\nabla \delta,\nabla \delta)\geq \frac{3}{\delta^2} - \frac{4\varrho}{\delta}\quad\text{on }\delta^{-1}((0,\varepsilon]).
	\end{align*}
	Then $\Delta$ is essentially self-adjoint on $\smash{\Cont_\comp^\infty(\mms)}$ \cite[Thm.~1]{prandi2018}.
	
	Examples of spaces satisfying these conditions are cones, metric horns, or anti-cones \cite{prandi2018}. In fact \cite[Thm.~8]{prandi2018}, the above hypotheses cover  \emph{regular almost Riemannian structures} \cite[Def.~7.1, Def.~7.10]{prandi2018}, briefly regular ARS. These structures, roughly said,  consist of a smooth manifold $\mms$ endowed with a metric $\Rmet$ which is singular on an embedded hypersurface, and smooth on its complement.
\end{example}

%\comment{It seems that these settings can also be turned on their heads to produce nonsmooth examples. That is, start with a complete, almost smooth LR manifold $(\mms,\Rmet)$ such that $\mms \setminus \mms^\infty$ has codimension $4$. However, neither $\mms$ is smooth, nor $\mms\setminus \mms^\infty$ needs to be an embedded submanifold. The current literature seems not to have covered this certainly interesting question yet. To be maybe ruled out soon, I would have to wait for a personal discussion with Kohei on Wednesday.}

Let us finally list some corollaries of \autoref{Th:Tamed convex boundary}. %\autoref{Cor:QI Tamed} is a special case of the latter result and follows directly from the considerations in \autoref{Ex:QI} and \autoref{Ex:QII}. 
\autoref{Cor:GE} follows from the proof of \autoref{Th:Tamed convex boundary} and \cite[Thm.~3.4]{ERST20}. A short proof of \autoref{Cor:SC} is included for convenience. The integrated Bochner inequality from \autoref{Cor:Hess} --- according to the notion of Hessian established in \cite{braun2021} --- follows from \cite[Cor.~8.3]{braun2021}.

\begin{corollary}\label{Cor:GE} In the situation of \autoref{Th:Tamed convex boundary}, the heat flow satisfies the following gradient estimate for every $f\in W^{1,2}(\mms)$ and every $t\geq 0$:
\begin{align*}
\vert\rmd\ChHeat_t f\vert \leq \Schr{\kappa/2}_t\vert\rmd f\vert\quad\vol\text{-a.e.}
\end{align*}	
\end{corollary}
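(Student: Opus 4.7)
My plan is to deduce the gradient estimate directly from the inequality $\BE_1(\kappa,\infty)$ established in the proof of \autoref{Th:Tamed convex boundary}, invoking the general equivalence machinery of \cite{ERST20} as indicated in the corollary statement. The standard road from a pointwise Bochner-type inequality to a semigroup-level gradient estimate proceeds via an interpolation argument, which I recall below in this setting.

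First I would pick $f\in\bigcup_{t>0}\ChHeat_t \Ell^2(\mms)$ (so that $\ChHeat_{t-s}f$ is smooth enough for every $s\in[0,t]$), fix $t>0$, and for a suitable nonnegative test function $\phi\in\Dom(\Delta^\kappa)$ consider the curve
\begin{align*}
I(s) := \int_\mms \Schr{\kappa/2}_s\phi \cdot \vert\rmd\ChHeat_{t-s}f\vert\d\vol, \qquad s\in[0,t].
\end{align*}
Formally differentiating in $s$, using the self-adjointness of $\Schr{\kappa/2}_s$ on $\Ell^2(\mms)$ and the fact that it is generated by $\Delta^\kappa/2-\kappa/2$, produces (after multiplying the $\BE_1$-inequality for $\ChHeat_{t-s}f$ by $\Schr{\kappa/2}_s\phi$ and integrating) the inequality $I'(s)\ge 0$. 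Integrating from $0$ to $t$ yields
\begin{align*}
\int_\mms \phi\cdot\vert\rmd\ChHeat_t f\vert\d\vol \le \int_\mms \Schr{\kappa/2}_t\phi\cdot\vert\rmd f\vert\d\vol = \int_\mms \phi\cdot\Schr{\kappa/2}_t\vert\rmd f\vert\d\vol,
\end{align*}
and varying $\phi$ over a sufficiently rich class of nonnegative test functions gives the pointwise $\vol$-a.e.~bound. This is exactly the content of \cite[Thm.~3.4]{ERST20}, which can be applied verbatim once $\BE_1(\kappa,\infty)$ is known.

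To finish, I would extend the resulting inequality from the dense subspace $\bigcup_{t>0}\ChHeat_t \Ell^2(\mms)\subset W^{1,2}(\mms)$ to all $f\in W^{1,2}(\mms)$ by approximation: for $f\in W^{1,2}(\mms)$, the functions $f_n := \ChHeat_{1/n}f$ converge to $f$ in $W^{1,2}(\mms)$, so $\vert\rmd\ChHeat_t f_n\vert\to \vert\rmd\ChHeat_t f\vert$ in $\Ell^2(\mms)$, while $\vert\rmd f_n\vert\to\vert\rmd f\vert$ in $\Ell^2(\mms)$ and the semigroup $(\Schr{\kappa/2}_t)_{t\ge0}$ is bounded on $\Ell^2(\mms)$ by the moderateness of $\kappa$ (\autoref{Le:Form boundedness}); passing to a subsequence that converges $\vol$-a.e.~yields the claim.

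The only nontrivial step is justifying that the curve $I$ above is absolutely continuous with the expected derivative, which is precisely what requires the regularity $\vert\rmd \ChHeat_{t-s}f\vert\in W^{1,2}(\mms)$ (equivalently, membership in $\Dom(\Ch^\kappa)$). For $f$ in the smooth subspace chosen at the start this follows from \autoref{Le:Reg lemma} applied to $\ChHeat_{t-s}f\in\Dom(\Delta)$; this is really the only reason why $\BE_1(\kappa,\infty)$ (pointwise) upgrades to the semigroup estimate and is where I expect the main technical effort to lie. Once this regularity is secured, the rest of the argument is a direct transcription of \cite[Thm.~3.4]{ERST20}.
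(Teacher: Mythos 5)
Your proposal is correct and takes essentially the same approach as the paper: once $\BE_1(\kappa,\infty)$ is established (which is the content of \autoref{Th:Tamed convex boundary}), the gradient estimate is exactly the implication proved in \cite[Thm.~3.4]{ERST20}, and the interpolation along $s\mapsto \int_\mms \Schr{\kappa/2}_s\phi\,\vert\rmd\ChHeat_{t-s}f\vert\d\vol$ together with the density argument is precisely what that theorem executes. One minor slip to correct: the generator of $(\Schr{\kappa/2}_t)_{t\geq 0}$ is $\sfL^{\kappa/2}=\Delta^\kappa/2$, not ``$\Delta^\kappa/2-\kappa/2$'' --- the Schrödinger perturbation is already built into $\Delta^\kappa$ via \autoref{Sub:Taming cond for dis}, so your formula double-counts $\kappa$.
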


\begin{corollary}\label{Cor:SC} If in addition to the hypotheses in \autoref{Th:Tamed convex boundary}, $(\mms,\met)$ is complete, then its heat flow is conservative.
\end{corollary}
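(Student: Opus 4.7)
The plan is to combine the $L^1$-gradient estimate from \autoref{Cor:GE} with the existence of good Lipschitz cutoffs having small gradient, which metric completeness grants via Hopf--Rinow--Cohn--Vossen, and then to close the argument by a Duhamel identity tested against compactly supported functions.

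First I would construct the cutoffs. Since $\mms$ is locally compact (as a topological manifold) and $(\mms,\met)$ is a complete length space by \autoref{Sub:Metric structure}, the Hopf--Rinow--Cohn--Vossen theorem guarantees that closed bounded subsets of $\mms$ are compact. Fixing $x_0\in\mms$ and a $2$-Lipschitz function $\eta\colon[0,\infty)\to[0,1]$ with $\eta\equiv 1$ on $[0,1]$ and $\eta\equiv 0$ on $[2,\infty)$, I would set $\chi_n(x):=\eta(\met(x,x_0)/n)$. Each $\chi_n$ then belongs to $W^{1,2}(\mms)\cap \Lip(\mms)$, is supported in the compact set $\overline{\Ball_{2n}(x_0)}$, satisfies $0\leq\chi_n\leq 1$ with $\chi_n\to 1$ pointwise, and obeys $\vert\rmd \chi_n\vert\leq 2/n$ $\vol$-a.e., by the chain rule combined with the $1$-Lipschitz property of $\met(\cdot,x_0)$.

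Next I would invoke \autoref{Cor:GE} to get $\vert\rmd\ChHeat_t\chi_n\vert\leq \Schr{\kappa/2}_t\vert\rmd\chi_n\vert$ $\vol$-a.e. Khasminskii's lemma (in the form of \cite[Rem.~2.14]{ERST20}, recalled in \autoref{Sub:Taming cond for dis}) guarantees that the Feynman--Kac semigroup $(\Schr{\kappa/2}_t)_{t\geq 0}$ is exponentially bounded on $L^\infty(\mms)$ with some constants $M\geq 1$ and $\omega\in\R$, hence
\[\big\Vert\vert\rmd\ChHeat_t\chi_n\vert\big\Vert_{L^\infty(\mms)} \leq M\,\rme^{\omega t}\,\big\Vert\vert\rmd\chi_n\vert\big\Vert_{L^\infty(\mms)} \leq 2M\,\rme^{\omega t}/n \to 0\]
as $n\to\infty$. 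For any nonnegative Lipschitz $\phi$ of compact support (so $\phi\in W^{1,2}(\mms)\cap L^1(\mms)$), the Duhamel identity
\[\int_\mms (\ChHeat_t\chi_n - \chi_n)\,\phi\d\vol = -\int_0^t \Ch(\ChHeat_s\chi_n,\phi)\d s\]
together with Cauchy--Schwarz for the bilinear form yields
\[\Big\vert\int_0^t\Ch(\ChHeat_s\chi_n,\phi)\d s\Big\vert \leq \big\Vert\vert\rmd\phi\vert\big\Vert_{L^1(\mms)}\int_0^t\big\Vert\vert\rmd\ChHeat_s\chi_n\vert\big\Vert_{L^\infty(\mms)}\d s \to 0,\]
while dominated convergence (using $\chi_n,\ChHeat_t\chi_n\leq 1$ and $\phi\in L^1(\mms)$) sends the left-hand side to $\int_\mms (\ChHeat_t 1 - 1)\,\phi\d\vol$. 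Hence this integral vanishes for every admissible $\phi$, and since such $\phi$ are dense in $L^1(\mms)$ and $\ChHeat_t 1\leq 1$ $\vol$-a.e.\ by sub-Markovianity, I conclude $\ChHeat_t 1 = 1$ $\vol$-a.e., i.e.\ conservativeness.

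The main delicacy is the exponential $L^\infty$-boundedness of $(\Schr{\kappa/2}_t)_{t\geq 0}$: although not stated explicitly in the excerpt, it is a direct consequence of Khasminskii's lemma and the form-theoretic setup recalled in \autoref{Sub:Taming cond for dis}, exactly in the spirit of \cite{ERST20}. Everything else is driven by the uniformity supplied by the gradient estimate along the cutoff sequence.
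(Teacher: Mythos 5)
Your argument is correct, but it takes a genuinely different route from the paper. The paper's proof is a two-line appeal to the abstract theory of \cite{ERST20}: since the intrinsic metric induced by $\Ch$ coincides with $\met$ and $(\mms,\met)$ is complete, the Dirichlet space is intrinsically complete in the sense of \cite[Def.~3.8]{ERST20}, tamed implies weakly tamed \cite[Def.~3.10]{ERST20}, and \cite[Thm.~3.11]{ERST20} then gives conservativeness directly. You instead unwind that black box and give a self-contained proof via cutoffs and the gradient estimate: Hopf--Rinow--Cohn--Vossen supplies compactly supported Lipschitz cutoffs $\chi_n$ with $\vert\rmd\chi_n\vert\leq 2/n$, \autoref{Cor:GE} together with the exponential $L^\infty$-boundedness of the Feynman--Kac semigroup (which indeed follows from Khasminskii's lemma and \cite[Rem.~2.14]{ERST20} as you say) forces $\Vert\vert\rmd\ChHeat_t\chi_n\vert\Vert_{L^\infty}\to 0$, and a Duhamel identity tested against compactly supported Lipschitz $\phi$ then kills the energy term and yields $\ChHeat_t 1 = 1$. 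This is essentially the mechanism underlying \cite[Thm.~3.11]{ERST20}, so you trade the reference for an explicit (and instructive) computation. Both proofs are valid; the paper's is shorter but opaque without \cite{ERST20} in hand, whereas yours is longer but makes transparent exactly how completeness and the taming gradient estimate interact. Minor cosmetic points: the integration-by-parts identity $\int(\ChHeat_t\chi_n - \chi_n)\phi\d\vol = -\int_0^t\Ch(\ChHeat_s\chi_n,\phi)\d s$ relies on $\Delta/2$ being the generator of $\Ch$, and your Cauchy--Schwarz bound for $\Ch(\cdot,\cdot)$ picks up an irrelevant extra factor $1/2$ from the normalization in \eqref{Eq:E definition}; neither affects the conclusion.
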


\begin{proof} Since the intrinsic distance generated by $\Ch$ generates the topology of $\mms$ by \autoref{Sub:Heat kernel} and since $(\mms,\met)$ is complete, the Dirichlet space induced by $(\mms,\Rmet)$ is intrinsically complete according to \cite[Def.~3.8]{ERST20} as proven in \cite[Rem.~3.8]{ERST20}. Since every tamed space is weakly tamed \cite[Def.~3.10]{ERST20}, conservativeness then follows from \cite[Thm.~3.11]{ERST20}.
\end{proof}

\begin{corollary}\label{Cor:Hess} In the situation of \autoref{Th:Tamed convex boundary}, every function $f\in\Dom(\Delta)$ belongs to  $\Dom_{\textnormal{reg}}(\Hess)$, and we have
	\begin{align*}
		\int_\mms \big\vert\!\Hess f\big\vert_\HS^2\d\vol \leq \int_\mms (\Delta f)^2\d\vol -\int_\mms \calk\,\vert\rmd f\vert^2\d\vol.
	\end{align*}
\end{corollary}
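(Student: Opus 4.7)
The plan is to combine the taming statement just proved in Theorem \ref{Th:Tamed convex boundary} with the general second-order calculus for tamed Dirichlet spaces developed in \cite{braun2021}. Once $(\mms,\Ch,\vol)$ is known to be tamed by $\kappa = \calk\,\vol$, the integrated Bochner inequality for every element of $\Dom(\Delta)$, formulated in terms of the nonsmooth Hessian built in \cite{braun2021}, is precisely \cite[Cor.~8.3]{braun2021}. So the corollary would be obtained as a direct transfer of that abstract result to the tamed space at hand.

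To give substance to why this is reasonable, I would first verify the inequality on the smooth core. For $f\in\Cont^\infty_\comp(\mms^\infty)$, Bochner's identity on the Riemannian manifold $\mms^\infty$ gives
\begin{align*}
\frac{1}{2}\Delta\vert\rmd f\vert^2 = \Rmet^*(\rmd f,\rmd\Delta f) + \vert\!\Hess f\vert_\HS^2 + \Ric(\nabla f,\nabla f)\quad\text{on }\mms^\infty.
\end{align*}
Since $\vert\rmd f\vert^2$ is smooth and compactly supported inside $\mms^\infty$, and since $\mms\setminus \mms^\infty$ is $\vol$-negligible, integration over $\mms$ and one integration by parts on the first term yield
\begin{align*}
\int_\mms \vert\!\Hess f\vert_\HS^2\d\vol \leq \int_\mms (\Delta f)^2\d\vol - \int_\mms \Ric(\nabla f,\nabla f)\d\vol \leq \int_\mms (\Delta f)^2\d\vol - \int_\mms \calk\,\vert\rmd f\vert^2\d\vol,
\end{align*}
where the last step uses the pointwise Ricci bound $\Ric\geq\calk$ on $\mms^\infty$.

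Next I would pass to arbitrary $f\in\Dom(\Delta)$ via the essential self-adjointness of $\Delta$ on $\Cont_\comp^\infty(\mms^\infty)$: pick $(f_n)_{n\in\N}\subset \Cont_\comp^\infty(\mms^\infty)$ converging to $f$ in the graph norm of $\Delta$. The right-hand side of the smooth-case inequality is controlled uniformly in $n$ using $\calk\,\vol\in\Kato(\mms)$ (Theorem \ref{Th:Kato criterion}) together with Lemma \ref{Le:Form boundedness} applied to $\vert\rmd f_n\vert\in W^{1,2}(\mms)$, the latter being supplied by Lemma \ref{Le:Reg lemma}. Hence $(\vert\!\Hess f_n\vert_\HS)_{n\in\N}$ is bounded in $\Ell^2(\mms)$, and the inequality passes to the limit by standard weak lower semicontinuity once the limiting object is identified.

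The main obstacle is precisely this last identification: matching the classical Hessian on the smooth part $\mms^\infty$ with the nonsmooth object $\Hess f$ and the regularity class $\Dom_{\textnormal{reg}}(\Hess)$ defined intrinsically on the tamed Dirichlet space in \cite{braun2021}. This compatibility, together with the closedness and lower semicontinuity of the Hessian energy that make the above approximation legitimate, is the content of the machinery leading to \cite[Cor.~8.3]{braun2021}. Since all the input required there---strong locality of $\Ch$, the heat-kernel and Kato bounds from Section~\ref{Ch:Control pairs}, and the $\BE_1(\kappa,\infty)$ condition established in Theorem \ref{Th:Tamed convex boundary}---is already available, the corollary follows by invoking that general result.
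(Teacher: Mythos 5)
Your proposal is correct and takes the same route as the paper: once $(\mms,\Ch,\vol)$ is known from Theorem~\ref{Th:Tamed convex boundary} to be tamed by $\kappa=\calk\,\vol$, the statement is an immediate application of the abstract integrated Bochner inequality \cite[Cor.~8.3]{braun2021} for tamed Dirichlet spaces, which is precisely the one-sentence proof the paper gives. Your additional smooth-core computation via the classical Bochner identity and the subsequent density argument serves as a useful consistency check (noting only that the first relation you write as $\leq$ is in fact an equality before invoking $\Ric\geq\calk$), but the identification of the abstract $\Hess$ and $\Dom_{\textnormal{reg}}(\Hess)$ with their classical counterparts is, as you correctly point out, exactly what the machinery of \cite{braun2021} handles, so no further argument is needed beyond the citation.
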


\subsection{A boundary taming criterion}\label{Sub:A boundary} Finally,  in \autoref{Th:Surface measure} we establish a condition for weighted surface measures on the boundary of  special LR manifolds to belong to $\Kato(\mms)$. With this at hand, in \autoref{Th:Taming bdry qi} we provide a taming condition for LR manifolds with boundary.

Throughout this section, our setting is the following: suppose that $(\mms,\Rmet)$ is quasi-isometric to a smooth,  complete Riemannian manifold $(\mmms,\RRmet)$ of dimension $d\geq 3$, without or with convex boundary, and with uniformly lower bounded Ricci curvature by a bi-Lipschitz map $F\colon \mms\to \mmms$. Moreover, let us assume that $\mmms$ has positive injectivity radius. As detailed in \autoref{Re:EINS} and \autoref{Re:ZWEI}, this assumption will allow us to characterize the Kato class of $\mmms$ in metric terms à la Aizenman--Simon \cite{aizenman1982,kuwae}, a formulation which is far easier to control in terms of codimension one sets. Using lower bounds for $\sfp_\RRmet$ \cite{sturm1992}, this information is then ``pulled back'' to $\mms$ through $F$.

\subsubsection{Correspondence of the Kato classes}\label{Sub:Corresp}

\begin{proposition}\label{Pr:Kato identification} For every $\mu\in\Kato(\mmms)$, we have $\smash{(F^{-1})_\push\mu\in \Kato(\mms)}$.
\end{proposition}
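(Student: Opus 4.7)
The plan is to verify both defining conditions of $\Kato(\mms)$, see \autoref{Def:KatoIntroII}, for the pushforward $\nu := (F^{-1})_\push\mu$. That $\nu$ does not charge $\Ch_\Rmet$-polar sets follows from \autoref{Le:Quasi-iso Dirichlet}: composition with $F^{-1}$ is a linear bijection between $\F_\Rmet$ and $\F_\RRmet$ making the two Dirichlet energies comparable, so the associated $1$-capacities are comparable. Hence $\Ch_\Rmet$-polar subsets of $\mms$ correspond via $F$ to $\Ch_\RRmet$-polar subsets of $\mmms$, and the hypothesis on $\mu$ transfers to $\nu$.

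The substance of the proof is the uniform integrability condition. The key technical step is the pointwise heat kernel comparison
\begin{align*}
\sfp_\Rmet(s,x,F^{-1}(z)) \leq C\,\sfp_\RRmet(cs, F(x), z)\quad\text{for every } s \in (0,1],\ x\in\mms,\ z\in\mmms,
\end{align*}
with constants $C,c>0$ depending only on $F$, on $d$ and on the geometry of $(\mmms,\RRmet)$. This follows by combining a Gaussian upper bound on $\sfp_\Rmet$, applicable to $(\mms,\Ch_\Rmet,\vol_\Rmet)$ through \autoref{Th:Quasi-isometry} and \autoref{Ex:QI}, with the classical Gaussian lower bound on $\sfp_\RRmet$ (Sturm's bound, as cited in the excerpt), which is available since $\Ric_\RRmet\geq -K$ and the injectivity radius of $\mmms$ is positive. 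The distance equivalence $\met_\Rmet(x,F^{-1}(z))\asymp\met_\RRmet(F(x),z)$ from \autoref{Le:Rev char} and the volume equivalence $\vol_\Rmet[\Ball_r^\Rmet(x)]\asymp \vol_\RRmet[\Ball_r^\RRmet(F(x))]$ from \autoref{Re:Volume comparison}, combined with the uniform local doubling of $\mmms$, allow one to align the Gaussian exponents (by choosing $c$ small) and to absorb the polynomial mismatch between volume prefactors into the exponentials.

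Once this pointwise bound is in place, the change of variables supplied by the pushforward, the substitution $u=cs$ and the fact that $F$ is a bijection onto $\mmms$ give
\begin{align*}
\sup_{x\in\mms}\int_0^t\!\!\int_\mms \sfp_\Rmet(s,x,y)\d\vert\nu\vert\d s \leq \frac{C}{c}\sup_{w\in\mmms}\int_0^{ct}\!\!\int_\mmms \sfp_\RRmet(u,w,z)\d\vert\mu\vert(z)\d u,
\end{align*}
and the right-hand side tends to $0$ as $t\to 0$ by $\mu\in\Kato(\mmms)$, whence $\nu\in\Kato(\mms)$.

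The main obstacle will be the heat kernel comparison itself. Matching the (possibly symmetric) volume factor stemming from the upper bound on $\sfp_\Rmet$ with the one-sided factor in the lower bound on $\sfp_\RRmet$ cannot be accomplished pointwise; it requires invoking the uniform volume doubling on $\mmms$ to swap ball-centers, while the positive injectivity radius of $\mmms$ is what guarantees the near-Euclidean volume lower bound $\vol_\RRmet[\Ball_{\sqrt{cs}}^\RRmet(F(x))]\gtrsim (cs)^{d/2}$ needed for a clean Gaussian lower estimate on $\sfp_\RRmet$ for small $s$.
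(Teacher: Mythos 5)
Your strategy is essentially the paper's: combine the Gaussian upper bound on $\sfp_\Rmet$ (available via \autoref{Th:Quasi-isometry} and \autoref{Ex:QI}) with Sturm's Gaussian lower bound on $\sfp_\RRmet$, transferring distances and ball volumes across the bi-Lipschitz map using \autoref{Le:Rev char} and \autoref{Re:Volume comparison} together with local doubling on $\mmms$. Two details are off, though neither breaks the argument. The time dilation $c$ in your pointwise comparison must be chosen \emph{large}, not small: writing $L$ for a bi-Lipschitz constant of $F$, after $\met_\Rmet(x,F^{-1}(z))\geq L^{-1}\met_\RRmet(F(x),z)$ the upper bound's exponential $\exp[-\met_\RRmet^2(F(x),z)/((4+\varepsilon)L^2 s)]$ is dominated by the reciprocal of the lower bound's exponential at time $cs$ only if $c$ exceeds a constant multiple of $L^2$; enlarging $c$ is harmless, since $ct\to 0$ as $t\to 0$. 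Also, positive injectivity radius plays no role in this proposition. Sturm's lower bound holds under a Ricci lower bound alone and already comes with the factor $\vol_\RRmet[\Ball^\RRmet_{\sqrt{t}}(\,\cdot\,)]^{-1}$, which matches the ball-volume factor in \eqref{Eq:HK bound II} directly via \autoref{Re:Volume comparison} and doubling; your detour through a $t^{-d/2}$ lower bound, which would indeed need non-collapsing, is unnecessary. The positive injectivity radius is a standing hypothesis of this section because it is needed later, in \autoref{Th:Surface measure}, for the Aizenman--Simon characterization of $\Kato(\mmms)$.
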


\begin{proof} By bi-Lipschitz continuity of $F$ and \autoref{Le:Quasi-iso Dirichlet}, the signed Borel measure $\kappa := \smash{(F^{-1})_\push\mu}$ on $\mms$ does not charge $\Ch_\Rmet$-polar sets. Now note that
\begin{align*}
&\sup_{x\in\mms}\int_0^t\!\!\int_{\mms} \sfp_\Rmet(s,x,\cdot)\d\vert\kappa\vert\d s\\
&\qquad\qquad \leq C \sup_{x\in\mms}\int_0^t\!\!\int_{\mms}\,\vol_\Rmet\big[\Ball_{\!\sqrt{t}}^\Rmet(x)\big]^{-1}\,\exp\!\Big[\!-\!\frac{\met_\Rmet^2(x,\cdot)}{5t}\Big]\d\vert\kappa\vert\d s\\
&\qquad\qquad \leq C\sup_{z\in\mmms}\int_0^t\!\!\int_{\mmms} \vol_\RRmet\big[\Ball_{\!\sqrt{t}/C}^\RRmet(z)\big]^{-1}\,\exp\!\Big[\!-\!\frac{\met_\RRmet^2(z,\cdot)}{5Ct}\Big]\d\vert\mu\vert\d s\\
&\qquad\qquad \leq C\sup_{z\in\mmms}\int_0^{Ct}\!\!\int_{\mmms} \sfp_\RRmet(s,z,\cdot)\d\vert\mu\vert \d s.
\end{align*}
Here we have used \eqref{Eq:HK bound II} in the first inequality, \autoref{Re:Volume comparison} and the Lipschitz continuity of $F$ in the second one, and the local doubling property of $\vol_\RRmet$ \cite[Thm.~5.6.4]{saloffcoste2002} and Gaussian lower bounds on $\sfp_\RRmet$ \cite[Thm.~4.5]{sturm1992} in the last step. Moreover, we allow the constant $C>0$, which does not depend on $x$, $z$ or $t$, to change from line to line. These estimates readily yield the claim.
\end{proof}

\begin{remark}\label{Re:EINS} \autoref{Pr:Kato identification} is a variant of \cite[Lem.~2.33]{ERST20}. However, here $(\mms,\Rmet)$ is not assumed to be of Harnack-type, hence to admit Gaussian upper and lower bounds on $\sfp$. Instead, we use this property on a concrete bi-Lipschitz image of $\mms$.
\end{remark}

Now let $Y\subset\mms$ be an open subset having  \emph{weakly Lipschitz} boundary, i.e.~there exist some $c>0$, a  covering $U_1,\dots,U_k$, $k\in\N$, of $\partial Y$ by open subsets of $\mms$, and $c$-Lip\-schitz maps $\smash{\varphi_i \colon U_i\to\R^{d-1}}$ such that
\begin{align*}
(\varphi_i)_\push\sigma_Y \leq c\,\Leb^{d-1} \quad\text{on }\varphi_i(U_i\cap \partial Y)
\end{align*}
for every $i\in\{1,\dots,k\}$. Here $\sigma_Y$ is the surface measure of $\partial Y$.

\begin{example} A simple framework in which the above hypotheses hold is when $Y=\mms$, under the assumption of compactness of $\partial N$ (hence $\partial\mms$).
	
	Moreover, every Lipschitz domain in $\smash{\R^d}$ with, say, compact boundary has weakly Lipschitz boundary.
\end{example}

\begin{theorem}\label{Th:Surface measure} Retain the previous  assumptions and notations, and suppose that $\call\in\Ell^q(\partial Y,\sigma_Y)$ for some $q \in (d-1,\infty)$. Then $\kappa := \call\,\sigma_Y$ belongs to $\Kato(\mms)$.
\end{theorem}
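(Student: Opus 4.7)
The plan is to transfer the problem from $\mms$ to the smooth model manifold $\mmms$, and then reduce it to an elementary Euclidean Hölder estimate via the weakly Lipschitz chart structure. Set $\tilde\kappa := F_\push\kappa$, a signed Borel measure on $\mmms$ supported on the bi-Lipschitz image $F(\partial Y)$, and let $\tilde\sigma$ denote the surface measure of $F(\partial Y)$. Bi-Lipschitzness of $F$ guarantees that $\tilde\kappa = \tilde\ell\,\tilde\sigma$ for some $\tilde\ell \in \Ell^q(F(\partial Y),\tilde\sigma)$, and that the maps $\varphi_i\circ F^{-1}$ turn $F(\partial Y)$ into a weakly Lipschitz boundary with constants controlled by $c$ and the bi-Lipschitz constant of $F$. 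By \autoref{Pr:Kato identification}, the identity $\kappa=(F^{-1})_\push\tilde\kappa$ reduces the claim to showing $\tilde\kappa\in\Kato(\mmms)$.

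On the smooth, geodesically complete $\mmms$ with uniformly lower bounded Ricci curvature, positive injectivity radius and (possibly empty) convex boundary, $\sfp_\RRmet$ satisfies two-sided Gaussian bounds (cf.~the proof of \autoref{Pr:Kato identification}). For $d\geq 3$, integrating these bounds in time yields the Aizenman--Simon-type characterization
\begin{align*}
\tilde\kappa \in \Kato(\mmms) \quad\Longleftrightarrow\quad \lim_{r\to 0}\sup_{z\in\mmms} \int_{\Ball_r^\RRmet(z)}\met_\RRmet(z,w)^{-(d-2)} \d|\tilde\kappa|(w) = 0,
\end{align*}
since $\int_0^t \sfp_\RRmet(s,z,\cdot)\d s$ is comparable to the Newtonian potential $\met_\RRmet(z,\cdot)^{2-d}$ at small scales (see \cite{aizenman1982,kuwae}).

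To verify the right-hand side, fix $r\in (0,1]$ and $z\in\mmms$, and split the integral across the finite covering $\{F(U_i)\}_{i=1}^k$. On each piece, $\varphi_i\circ F^{-1}$ maps $\Ball_r^\RRmet(z)\cap F(U_i\cap \partial Y)$ into a Euclidean ball of radius $\lesssim r$ in $\R^{d-1}$, while $\tilde\sigma$ pushes to a measure dominated by a constant multiple of $\Leb^{d-1}$. Hölder's inequality with exponents $q$ and $q':=q/(q-1)$ therefore produces
\begin{align*}
\int_{\Ball_r^\RRmet(z)}\frac{|\tilde\ell(w)|}{\met_\RRmet(z,w)^{d-2}}\d\tilde\sigma(w) \leq \|\tilde\ell\|_{\Ell^q(\tilde\sigma)}\Big[\sum_{i=1}^k \int_{\{|\xi|\leq Cr\}}|\xi|^{-(d-2)q'}\d\Leb^{d-1}(\xi)\Big]^{1/q'},
\end{align*}
and each inner integral equals a constant times $r^{d-1-(d-2)q'}$. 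The exponent is strictly positive precisely when $q'<(d-1)/(d-2)$, i.e.~when $q>d-1$. Hence the right-hand side is controlled by $C\,\|\tilde\ell\|_{\Ell^q(\tilde\sigma)}\,r^\alpha$ with $\alpha := (d-1-(d-2)q')/q' > 0$, uniformly in $z$, and tends to zero as $r\to 0$.

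The main technical step will be the rigorous implementation of the Aizenman--Simon equivalence on $\mmms$ using the two-sided Gaussian bounds for $\sfp_\RRmet$, together with the verification that the push-forward of the weakly Lipschitz structure under $F$ retains the chartwise comparability of the surface measure with $\Leb^{d-1}$; the scaling computation $(d-2)q'<d-1 \Leftrightarrow q > d-1$ is then routine.
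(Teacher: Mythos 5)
Your proof is correct and essentially mirrors the paper's argument: verify the Aizenman--Simon potential condition for $\kappa$ via the weakly Lipschitz charts and Hölder's inequality (with the same exponent computation $q>d-1 \Leftrightarrow p(d-2)<d-1$ for the dual exponent $p$), invoke the metric characterization of $\Kato(\mmms)$ from \cite{kuwae}, and transfer back through \autoref{Pr:Kato identification}. The only cosmetic difference is that the paper performs the Hölder estimate in the charts of $\mms$ first and then pushes the resulting smallness to $\mmms$ via \autoref{Le:Rev char} and \autoref{Re:Volume comparison}, whereas you push $\kappa$ forward to $\mmms$ before estimating; both amount to the same computation once the routine transfer of the weakly Lipschitz structure and the comparability of $F_\push\sigma_Y$ with the intrinsic surface measure on $F(\partial Y)$ are noted.
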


\begin{proof} We first claim that
\begin{align}\label{Eq:Claim QI}
\lim_{r\to 0} \sup_{x\in\mms}\int_{\Ball_r^\Rmet(x)} \met_\Rmet^{2-d}(x,\cdot)\,\vert\call\vert\d\sigma_Y = 0.
\end{align}
Indeed, given any $x\in\mms$, for $p\in (1, \infty)$ satisfying $1/p+1/q=1$ we have
\begin{align*}
\int_{\Ball_r^\Rmet(x)}\met_\Rmet^{2-d}(x,\cdot)\,\vert\call\vert\d\sigma_Y  \leq \Vert\call\Vert_{\Ell^p(\partial Y,\sigma)}\,\Big[\!\int_{\Ball_r^\Rmet(x)\cap U} \met_\Rmet^{q(2-d)}(x,\cdot)\d\sigma_Y\Big]^{1/q},
\end{align*}
where $U := U_1\cup\dots\cup U_k$. For every $i\in\{1,\dots,k\}$, we have
\begin{align*}
\int_{\Ball_r^\Rmet(x)\cap U_i} \met_\Rmet^{q(2-d)}(x,\cdot)\d\sigma_Y &\leq  c^{q(d-2)} \int_{V_i}\big\vert\varphi_i -\varphi_i(x)\big\vert^{q(2-d)}\d\sigma_Y\\
&\leq c^{q(d-2)+1} \int_{B_{cr}(0)} \vert \cdot\vert^{q(2-d)}\d\Leb^{d-1}.
\end{align*}
Here we write $V_i := \vert\varphi_i-\varphi_i(x)\vert^{-1}([0,cr))\cap U_i$, and $B_{cr}(0)$ is intended as open ball in $\smash{\R}^{d-1}$. As $r \to 0$, the last integral tends to zero precisely by our choice of $q$. This readily proves the claim \eqref{Eq:Claim QI}.

Now we finally argue that \eqref{Eq:Claim QI} implies $\call\,\sigma_Y\in\Kato(\mms)$. Indeed, thanks to \eqref{Eq:Claim QI}, \autoref{Le:Rev char} and \autoref{Re:Volume comparison},
\begin{align*}
\lim_{r\to 0}\sup_{z\in\mmms}\int_{\Ball_r^\RRmet(z)} \met_\RRmet^{2-d}(z,\cdot)\,\vert\call\vert\circ F^{-1}\d F_\push\sigma_Y =0.
\end{align*}
By our assumptions on $(\mmms,\RRmet)$, the corresponding  considerations in \cite[Ex.~6.8]{kuwae} and \cite[Thm.~3.1]{kuwae}, the previous property implies --- in fact, is equivalent to --- the statement that $[\vert\call\vert\circ F^{-1}]\,F_\push\sigma_Y\in \Kato(\mmms)$ according to \autoref{Def:KatoIntroII}. The assertion follows from \autoref{Pr:Kato identification}.
\end{proof}

\begin{remark}\label{Re:ZWEI} \autoref{Th:IntroTh1} is a variant of  \cite[Thm.~2.36]{ERST20}. Therein, however, $\mms$ is supposed to be smooth, and the equivalence of \eqref{Eq:Claim QI} with the condition that  $\call\,\sigma_Y\in\Kato(\mms)$ is part of the \emph{assumption} as well. In our setting, we do not know if this equivalence holds (unless $\sfp_\Rmet$ satisfies Gaussian lower bounds). Again, instead, we use this correspondence on a concrete bi-Lipschitz image of $\mms$.
\end{remark}

\subsubsection{Taming of inner uniform domains} Finally, we study taming of sets $Y$ as above, endowed with the canonical Dirichlet structure $(\Ch_Y, W^{1,2}(Y))$ inherited by the LR structure of $\smash{(Y,\Rmet\big\vert_Y)}$, as outlined in \autoref{Sub:Dir form}. In particular, all objects from \autoref{Sub:Calculus on} associated to $\smash{(Y,\Rmet\big\vert_Y)}$ will be assigned (in a notationally evident way) with a sub- or superscript $Y$.

Partly following the terminology proposed in \cite{GSL11}, in addition to the assumptions on $Y$ from \autoref{Sub:Corresp}, we make the following hypotheses.
\begin{enumerate}[label=\alph*.]
	\item\label{La:EINS} \emph{Inner uniformity.} There exist some constants $c, C > 0$ such that any two points $x, y \in \mms$ can be connected by a continuous curve $\gamma\colon [0, 1]\to Y$ with length at most $C\, \met_Y(x, y)$ with the property that for every $z \in \gamma([0, 1])$, 
	\begin{align*}
	\met(z, \partial \mms) \ge c \min\{\met_Y (z, x), \met_Y(z, y) \}.
	\end{align*}
	\item\label{La:ZWEI} \emph{Tubular volume.} We have
	\begin{align*}
		\inf\!\Big\{\dfrac{\vol[\Ball_r(y) \cap Y]}{\vol[\Ball_r(y)]} : r > 0,\, y \in Y \Big\} > 0
	\end{align*}
	\item\label{La:DREI} \emph{Regular exhaustion.} There exists an increasing sequence $(Y_n)_{n\in\N}$ of subsets of $Y$ with the following properties.
	\begin{itemize}
		\item Every $Y_n$, $n\in\N$, has a smooth structure and 
		smooth boundary such that $\smash{\Rmet\big\vert_{Y_n}}$ is smooth.
		\item The sequence $\smash{(\overline{Y}_n)_{n\in\N}}$ is an $\Ch_Y$-nest consisting of compact sets.
		\item For every compact $K\subset Y$ there exists
		$N\in\N$ such that $K\subset Y_n$ for every $n\geq N$.
	\end{itemize}
	\item\label{La:VIER} \emph{Curvature bounds.} There exist  $\calk\in\Ell^p(Y,\vol_Y[\Ball_1^Y(\cdot)]^{-1}\,\vol_Y)+ \Ell^\infty(Y,\vol_Y)$, $p\in (d/2,\infty)$,  and $\call\in\Ell^q(\partial Y,\sigma_Y)$, $q \in (d-1,\infty)$, as well as a sequence $(\call_n)_{n\in\N}$ of functions $\call_n \colon \partial Y_n\to \R$, $n\in\N$, such that for every $n\in\N$,
	\begin{alignat*}{3}
		\Ric_{Y_n} &\geq \calk &&&\quad &\text{on }Y_n,\\
		\call_n &= \call &&&& \text{on }\partial Y_n\cap\partial Y,\\
		\call_n &\geq 0 &&&& \text{on }\partial Y_n\setminus\partial Y.
	\end{alignat*}
\end{enumerate} 

\begin{theorem}\label{Th:Taming bdry qi} Under the foregoing hypotheses, $(Y,\smash{\Rmet}\big\vert_Y)$ is tamed by
	\begin{align*}
		\kappa := \calk\,\vol_Y + \call\,\sigma_Y.
	\end{align*}
\end{theorem}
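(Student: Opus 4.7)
First I would verify that $\kappa$ is a well-defined element of the Kato class $\Kato(Y)$ of the inherited Dirichlet structure on $Y$. For the bulk part $\calk\,\vol_Y$, the inner uniformity condition \ref{La:EINS} together with the tubular volume property \ref{La:ZWEI} pass the hypotheses of \autoref{Sub:Quasi isom} from $\mms$ to $Y$, hence \autoref{Cor:Quasi-isometry} yields a heat kernel control pair $(\Xi_Y,\Sigma_Y)$ for $(Y,\Ch_Y,\vol_Y)$ with $\Xi_Y(y) := \vol_Y[\Ball_1^Y(y)]^{-1}$, so that \autoref{Th:Kato criterion} gives $\calk\,\vol_Y\in\Kato(Y)$. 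For the surface part, the geometric setup on $\mms$ carries over to $Y$ (since $Y$ is itself quasi-isometric to a smooth complete manifold with uniform lower Ricci bound and positive injectivity radius, obtained by restricting $F$ or by the regular exhaustion of \ref{La:DREI}), so \autoref{Th:Surface measure} applied inside $Y$ gives $\call\,\sigma_Y\in\Kato(Y)$. Consequently $\kappa\in\Kato(Y)$, the taming energy $\Ch^\kappa$ is closed and lower semibounded, and $\Dom(\Ch^\kappa)=\F_Y$.

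Next I would work on each smooth piece $Y_n$ provided by \ref{La:DREI}, where the metric is smooth and the boundary $\partial Y_n$ is smooth. For $f\in\Cont^\infty(\overline{Y}_n)$ with $\Neu f = 0$ on $\partial Y_n\setminus\partial Y$ (a Neumann-type core on the ``artificial'' part of the boundary) and suitable behaviour on $\partial Y_n\cap\partial Y$ encoded by $\call_n$, the classical Bochner identity with boundary reads
\begin{align*}
\frac{1}{2}\Delta\vert\rmd f\vert^2 = \big\vert\!\Hess f\big\vert_\HS^2 + \Rmet^*(\rmd f,\rmd\Delta f) + \Ric(\nabla f,\nabla f)\quad\text{in }Y_n,
\end{align*}
with an extra second fundamental form contribution $\II(\nabla f,\nabla f)$ on $\partial Y_n$ after integration by parts. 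Using \ref{La:VIER}, $\II \geq \call_n$ chartwise can be arranged for the exhaustion so that the integrated Bochner inequality becomes, for nonnegative $\phi\in\Lip_\comp(\overline{Y}_n)$,
\begin{align*}
-\Ch^{\kappa_n}_{Y_n}(\phi,\vert\rmd f\vert) - \int_{Y_n}\phi\,\Rmet^*(\rmd f,\rmd\Delta f)\,\vert\rmd f\vert^{-1}\d\vol_Y \leq 0,
\end{align*}
where $\kappa_n := \calk\,\vol_{Y_n} + \call_n\,\sigma_{Y_n}$, exactly by repeating the $\eta_\varepsilon$-regularization argument of \autoref{Le:Reg lemma} and the integration by parts of \autoref{Th:Tamed convex boundary}. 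The nonnegativity of $\call_n$ on $\partial Y_n\setminus\partial Y$ ensures that the contribution from the artificial boundary only strengthens the inequality and can be absorbed.

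Finally I would pass to the limit $n\to\infty$. Since $(\overline{Y}_n)_{n\in\N}$ is an $\Ch_Y$-nest with $\bigcup_n Y_n = Y$ and $\call_n$ agrees with $\call$ on $\partial Y_n\cap\partial Y$, the taming energies $\Ch^{\kappa_n}$ converge in the Mosco sense to $\Ch^\kappa$ on $\F_Y$, thanks to \autoref{Le:Form boundedness} applied uniformly to $\vert\calk\vert\,\vol_Y+\vert\call\vert\,\sigma_Y\in\Kato(Y)$; the quasi-local nature of $\kappa$ together with the heat kernel bounds inherited from the quasi-isometry control the boundary term in the limit. Taking $f\in\bigcup_{t>0}\ChHeat_t^Y\Ell^2(Y)$, which is $\Cont^\infty$ on $Y_n$ by interior elliptic regularity, and nonnegative $\phi\in\Dom(\Delta^\kappa)$ approximated by compactly supported Lipschitz functions inside $Y_n$, the $n$-level inequality passes to the limit and yields
\begin{align*}
\int_Y \Delta^\kappa\phi\,\vert\rmd f\vert\d\vol_Y - \int_Y\phi\,\Rmet^*(\rmd f,\rmd\Delta f)\,\vert\rmd f\vert^{-1}\d\vol_Y \geq 0.
\end{align*}
Following the last step of \autoref{Th:Tamed convex boundary} (i.e.\ the proof of \cite[Thm.~3.4]{ERST20}) then gives $\BE_1(\kappa,\infty)$, hence taming.

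The main obstacle is the boundary analysis in the limiting step: controlling the contribution of $\call_n\,\sigma_{Y_n}$ on $\partial Y_n\setminus\partial Y$ as $n\to\infty$, since these ``artificial'' boundary pieces may accumulate inside $Y$ and must be shown to vanish (or to contribute nonnegatively) in the limit. The nest property \ref{La:DREI}, the Kato control provided by \autoref{Th:Surface measure}, and the sign condition $\call_n\geq 0$ on $\partial Y_n\setminus\partial Y$ are exactly what is needed to neutralize this issue.
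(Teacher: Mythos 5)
The paper's proof is far shorter than your proposal and relies on a prepackaged approximation theorem, \cite[Thm.~4.5]{ERST20}, which converts taming of the exhausting pieces plus \emph{uniform moderateness} of the taming distributions into taming of the ambient space. After verifying $\kappa\in\Kato(Y)$ --- where your argument agrees with the paper's --- the key ingredient in the paper is probabilistic, not variational: setting $\call^*_n := \call_n\,\One_{\partial Y_n\cap\partial Y}$, one observes that the positive continuous additive functional generated by $\calk^-\,\vol_{Y_n} + (\call^*_n)^-\,\sigma_{Y_n}$ is dominated by the one generated by $\calk^-\,\vol_Y + \call^-\,\sigma_Y$ up to the first hitting time of $\partial Y_n\setminus\partial Y$; the sign condition $\call_n\geq 0$ in \ref{La:VIER} on $\partial Y_n\setminus\partial Y$ is used precisely here, to kill the negative part of the artificial boundary contribution. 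Khasminskii's lemma \cite[Lem.~2.24]{ERST20} then yields uniform $1$- and $2$-moderateness of $(\kappa_n)_{n\in\N}$, and together with the nest property in \ref{La:DREI} this activates \cite[Thm.~4.5]{ERST20}.

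Your proposal instead redoes the interior $\eta_\varepsilon$-regularization/integration-by-parts argument of \autoref{Le:Reg lemma} and \autoref{Th:Tamed convex boundary} on each $Y_n$ with a boundary term and then tries to pass to the limit by hand. This is a genuinely different strategy, but it has concrete gaps. First, you claim that ``$\II\geq\call_n$ chartwise can be arranged for the exhaustion''; that is not available. The exhaustion $(Y_n)_n$ is given in \ref{La:DREI}, and \ref{La:VIER} constrains only the sign of $\call_n$ on $\partial Y_n\setminus\partial Y$ --- it says nothing about the second fundamental form of $\partial Y_n$. Without an inequality $\II_{\partial Y_n}\geq \call_n$ the integrated boundary Bochner inequality on $Y_n$ does not follow from what is assumed. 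Second, the Mosco convergence $\Ch^{\kappa_n}\to\Ch^\kappa$ is asserted but not proved; \autoref{Le:Form boundedness} applied to $\vert\calk\vert\,\vol_Y+\vert\call\vert\,\sigma_Y$ gives no control of the $\call_n\,\sigma_{Y_n}$ contributions supported on the moving artificial boundaries $\partial Y_n\setminus\partial Y$, which are mutually singular with $\kappa$. Third, your closing paragraph correctly identifies the artificial-boundary contribution as the central difficulty, but then declares that the hypotheses ``are exactly what is needed to neutralize this issue'' without doing so --- this is precisely the step the paper discharges through the AF comparison and uniform moderateness, and your argument has no substitute for it. To repair the proposal along your lines you would, at minimum, need to replace the Mosco/weak-limit claim with the explicit uniform moderateness estimate (or an equivalent uniform form bound for $\Ch^{\kappa_n}$ on $\F_{Y_n}$), which is what \cite[Thm.~4.5]{ERST20} requires as input.
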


\begin{proof} First, we claim that $\calk\,\vol_Y \in \Kato(Y)$, which will imply that $\kappa\in\Kato(Y)$ thanks to \autoref{Th:Surface measure}. Indeed, by \ref{La:EINS} and \ref{La:ZWEI} (which grant that distances and volumes of balls in $Y$ are comparable with those in $\mms$, cf.~also \cite[Lem.~2.34]{ERST20}) as well as the considerations from \autoref{Ex:QI}, the assumptions of \autoref{Th:Quasi-isometry} are satisfied for $\smash{(Y,\Rmet\big\vert_Y)}$, and the claim follows from \autoref{Th:Kato criterion}.
	
We set $\call^*_n := \call_n \,\One_{\partial Y_n \cap \partial Y}$,  where $n\in\N$. Thanks to \ref{La:VIER} and \autoref{Th:Surface measure}, we obtain  $\smash{\call^*_n\,\sigma_{Y_n} \in \Kato(Y_n)}$. Furthermore, the AF associated with $\smash{\calk^-\,\vol_{Y_n}+ (\call_n^*)^-\,\sigma_{Y_n}}$ is no larger than the AF corresponding to $\smash{\calk^-\,\vol_Y + \call^-\,\sigma_Y}$ up to the first hitting time of $\partial Y_n\setminus \partial Y$. By Khasminskii's lemma \cite[Lem.~2.24]{ERST20}, this implies that the family $(\kappa_n)_{n\in\N}$ given by $\smash{\kappa_n := \calk\,\vol_{Y_n} + \call_n\,\sigma_{Y_n}\in\Kato(Y_n)}$ is uniformly $1$- and $2$-moderate according to \cite[Thm.~4.5]{ERST20}. Together with \ref{La:DREI}, the same result ensures the taming condition for $\smash{(Y,\Rmet\big\vert_Y)}$ and hence the claim.
\end{proof}

	\end{document}